\newtheorem{theorem}{Theorem}[section]
\newtheorem{lemma}[theorem]{Lemma}
\newtheorem{corollary}[theorem]{Corollary}
\newtheorem{definition}[theorem]{Definition}
\newtheorem{remark}[theorem]{Remark}
\numberwithin{equation}{section}
\title{\bf Sobolev-Lorentz spaces with an application to the inhomogeneous biharmonic NLS equation}
\author{{JinMyong An, PyongJo Ryu, JinMyong Kim$^*$}\\
\footnotesize{Faculty of Mathematics, {\bf Kim Il Sung} University, Pyongyang, Democratic People's Republic of Korea}\\
\footnotesize{$^*$ Corresponding Author: JinMyong Kim (jm.kim0211@ryongnamsan.edu.kp)}}
\date{}
\begin{document}
\maketitle
\begin{abstract}
We consider the Cauchy problem for the inhomogeneous biharmonic nonlinear Schr\"{o}dinger (IBNLS) equation
\[iu_{t} +\Delta^{2} u=\lambda |x|^{-b}|u|^{\sigma}u,\;u(0)=u_{0} \in H^{s} (\mathbb R^{d}),\]
where $\lambda\in \mathbb R$, $d\in \mathbb N$, $0\le s<\min\left\{2+\frac{d}{2},d\right\}$, $0<b<\min \left\{4,\; d-s,\; 2+\frac{d}{2}-s \right\}$ and $0<\sigma\le \sigma_{c}(s)$ with $\sigma<\infty$. Here $\sigma_{c}(s)=\frac{8-2b}{d-2s}$ if $s<\frac{d}{2}$, and $\sigma_{c}(s)=\infty$ if $s\ge \frac{d}{2}$.
First, we give some remarks on Sobolev-Lorentz spaces and extend the chain rule under Lorentz norms for the fractional Laplacian $(-\Delta)^{s/2}$ with $s\in (0,1]$ established by \cite{AT21} to any $s>0$. Applying this estimate and the contraction mapping principle based on Strichartz estimates in Lorentz spaces, we then establish the local well-posedness in $H^{s}$ for the IBNLS equation in both of subcritical case $\sigma<\sigma_{c}(s)$ and critical case $\sigma=\sigma_{c}(s)$. We also prove that the IBNLS equation is globally well-posed in $H^{s}$, if the initial data is sufficiently small and $\frac{8-2b}{d}\le \sigma\le \sigma_{c}(s)$ with $\sigma<\infty$.
\end{abstract}

\noindent {\bf Keywords}:  Inhomogeneous biharmonic nonlinear Schr\"{o}dinger equation, Local well-posedness, Global well-posedness, Sobolev-Lorentz spaces, Fractional chain rule.\\

\noindent {\bf Mathematics Subject Classification (2020)}: 35Q55, 35A01, 46E35.
\section{Introduction}\label{sec 1.}

In this paper, we study the Cauchy problem for the inhomogeneous biharmonic nonlinear Schr\"{o}dinger (IBNLS) equation
\begin{equation} \label{GrindEQ__1_1_}
\left\{\begin{array}{l} {iu_{t} +\Delta^{2}u=\lambda |x|^{-b} |u|^{\sigma} u,~(t,x)\in \mathbb R\times \mathbb R^{d},}\\
{u(0,x)=u_{0}(x) \in H^{s}(\mathbb R^{d})}, \end{array}\right.
\end{equation}
where $d\in \mathbb N$, $s\ge 0$, $0<b<4$, $\sigma>0$ and $\lambda \in \mathbb R$.
The limiting case $b=0$ (classic biharmonic nonlinear Schr\"{o}dinger equation) was introduced by Karpman \cite{K96} and Karpman-Shagalov \cite{KS97} to take into account the role of small fourth-order dispersion terms in the propagation of intense laser beams in a bulk medium with Kerr nonlinearity and it has attracted a lot of interest during the last two decades. See, for example, \cite{D18B,D18I,D21,GC07,LZ21,MZ16,P07,PX13} and the references therein.
The local and global well-posedness as well as scattering and blow-up in the energy space $H^{2}$ have been widely studied.
See \cite{D21,MZ16,P07,PX13} and the references for example.
On the other hand, the local and global well-posedness in the fractional Sobolev spaces $H^s$ have also been studied by several authors. See \cite{D18B,D18I,GC07,LZ21} for example.

The equation \eqref{GrindEQ__1_1_} has a counterpart for the Laplacian operator, namely, the inhomogeneous nonlinear Schr\"{o}dinger (INLS) equation
\begin{equation} \label{GrindEQ__1_2_}
iu_{t} +\Delta u=\lambda |x|^{-b} |u|^{\sigma} u.
\end{equation}
The INLS equation \eqref{GrindEQ__1_2_} arises in nonlinear optics for modeling the propagation of laser beam and it has been widely studied by many authors. See, for example, \cite{AT21,AK211,AK212,AKC21,AK213,C21,DK21,G17,GM21,MMZ21} and the references therein.
The local and global well-posedness as well as blow-up and scattering in the energy space $H^{1}$ have been widely studied by many authors.
See, for example, \cite{C21,DK21,GM21, MMZ21} and the references therein.
Meanwhile, the local and global well-posedness for the INLS equation \eqref{GrindEQ__1_2_} in the fractional Sobolev space have also been studied by \cite{AT21,AK211,AK212,AKC21,AK213,G17}.

The IBNLS equation \eqref{GrindEQ__1_1_} is invariant under scaling $u_{\alpha}(t,x)=\alpha^{\frac{4-b}{\sigma}}u(\alpha^{4}t,\alpha x ),~\alpha >0$.
An easy computation shows that
\begin{equation} \label{GrindEQ__1_3_}
\left\|u_{\alpha}(t)\right\|_{\dot{H}^{s}}=\alpha^{s+\frac{4-b}{\sigma}-\frac{d}{2}}\left\|u(t)\right\|_{\dot{H}^{s}}.
\end{equation}
We thus define the critical Sobolev index
\begin{equation} \label{GrindEQ__1_4_}
s_{c}:=\frac{d}{2}-\frac{4-b}{\sigma}.
\end{equation}
Putting
\begin{equation} \label{GrindEQ__1_5_}
\sigma_{c}(s):=
\left\{\begin{array}{cl}
{\frac{8-2b}{d-2s},} ~&{{\rm if}~s<\frac{d}{2},}\\
{\infty,}~&{{\rm if}~s\ge \frac{d}{2},}
\end{array}\right.
\end{equation}
we can easily see that $s>s_{c}$ is equivalent to $\sigma<\sigma_{c}(s)$. If $s<\frac{d}{2}$, then $s=s_{c}$ is equivalent to $\sigma=\sigma_{c}(s)$.
For initial data $u_{0}\in H^{s}(\mathbb R^{d})$, we say that the Cauchy problem \eqref{GrindEQ__1_1_} is $H^{s}$-critical (for short, critical) if $0\le s<\frac{d}{2}$ and $\sigma=\sigma_{c}(s)$.
If $s\ge 0$ and $\sigma<\sigma_{c}(s)$, then the problem \eqref{GrindEQ__1_1_} is said to be  $H^{s}$-subcritical (for short, subcritical).
Especially, if $\sigma =\frac{8}{d-2s}$, then the problem is known as $L^{2}$-critical or mass-critical.
If $\sigma =\frac{8-2b}{d-4}$ with $d\ge 5$, it is called $H^{2}$-critical or energy-critical.
Throughout the paper, a pair $(\gamma(p),p)$ is said to be biharmonic Schr\"{o}dinger admissible or $B$-admissible if
\begin{equation} \label{GrindEQ__1_6_}
\left\{\begin{array}{ll}
{2\le p\le\frac{2d}{d-4}},~&{{\rm if}~d>4,}\\
{2\le p<\infty,}~&{{\rm if}~d\le 4,}
\end{array}\right.
\end{equation}
and
\begin{equation} \label{GrindEQ__1_7_}
\frac{4}{\gamma (p)} =\frac{d}{2} -\frac{d}{p} .
\end{equation}
The IBNLS equation \eqref{GrindEQ__1_1_} also enjoys the conservations of mass and energy, which are defined respectively by
\begin{equation} \label{GrindEQ__1_8_}
M\left(u(t)\right):=\int_{\mathbb R^{d}}{| u(t,x)|^{2}dx}=M\left(u_{0} \right),
\end{equation}
\begin{equation} \label{GrindEQ__1_9_}
E\left(u(t)\right):=\frac{1}{2}\int_{\mathbb R^{d}}{|\Delta u(t,x)|^2 dx}-\frac{\lambda
}{\sigma+2}\int_{\mathbb R^{d}}{|x|^{-b}\left|u(t,x)\right|^{\sigma+2}dx}=E\left(u_{0} \right).
\end{equation}

The IBNLS equation \eqref{GrindEQ__1_1_} has attracted a lot of interest in recent years. See, for example, \cite{ARK221, ARK222,CG21, CGP20, GP20, GP21, LZ212, S21} and the references therein.
Guzm\'{a}n-Pastor \cite{GP20} proved that \eqref{GrindEQ__1_1_} is locally well-posed in $L^{2}$, if $d\in \mathbb N$, $0<b<\min\left\{4,d\right\}$ and $0<\sigma<\sigma_{c}(0)$. They also established the local well-posedness in $H^{2}$ for $d\ge 3$, $0<b<\min\{\frac{d}{2},4\}$, $\max\{0,\frac{2-2b}{d}\}<\sigma<\sigma_{c}(2)$.
Furthermore, they obtained the global well-posedness results in $H^{2}$ for $\max\{0,\frac{2-2b}{d}\}<\sigma\le \frac{8-2b}{d}$.
The global well-posedness and scattering in $H^{2}$ in the intercritical case $\frac{8-2b}{d}<\sigma<\sigma_{c}(2)$ were also studied in \cite{CG21, CGP20, GP21, S21}.
Afterwards, Cardoso-Guzm\'{a}n-Pastor \cite{CGP20} established the local and global well-posedness of \eqref{GrindEQ__1_1_} in $\dot{H}^{s_{c}}\cap \dot{H}^{2}$ with $d\ge 5$, $0<s_{c}<2$, $0<b<\min\{\frac{d}{2},4\}$ and $\max\{1,\frac{8-2b}{d}\}<\sigma< \frac{8-2b}{d-4}$.
Recently, Liu-Zhang \cite{LZ212} established the local well-posedness in $H^{s}$ with $0<s<2$ by using the Besov space theory.
More precisely, they proved that the IBNLS equation \eqref{GrindEQ__1_1_} is locally well-posed in $H^{s}$ if $d\in \mathbb N$, $0<s\le 2$, $0<b<\min\{\frac{d}{2},4\}$ and $0<\sigma<\sigma_{c}(s)$. See Theorem 1.5 of \cite{LZ212} for details.
This result about the local well-posedness of \eqref{GrindEQ__1_1_} improves the one of \cite{GP20} by not only extending the validity of $d$ and $s$ but also removing the lower bound $\sigma>\frac{2-2b}{d}$.
They also obtained the global well-posedness result in $H^{2}$ in the full range of mass-subcritical case and mass-critical cases $0<\sigma\le \frac{8-2b}{d}$.
Very recently, the authors in \cite{ARK221, ARK222} established the local and global well-posedness in $H^{s}$ for the IBNLS equation \eqref{GrindEQ__1_1_} with $d\in \mathbb N$, $0\le s <\min \left\{2+\frac{d}{2},\frac{3}{2}d\right\}$, $0<b<\min\left\{4,d,\frac{3}{2}d-s,\frac{d}{2}+2-s\right\}$ and $0<\sigma<\sigma_{c}(s)$.

As mentioned above, the local and global well-posedness in $H^{s}$ for the IBNLS equation \eqref{GrindEQ__1_1_} have been widely studied for the $H^{s}$-subcritical case, i.e. $\sigma<\sigma_{c}(s)$. However, up to the knowledge of the authors, the local and global well-posedness for the IBNLS equation \eqref{GrindEQ__1_1_} in the $H^{s}$-critical case, i.e. $\sigma=\frac{8-2b}{d-2s}$ with $s<\frac{d}{2}$ was not known until very recently.

The main purpose of this paper is to establish the local and global well-posedness in $H^{s}$ with $0\le s<\min\left\{2+\frac{d}{2},d\right\}$ for the IBNLS equation \eqref{GrindEQ__1_1_} in both of subcritical case $\sigma<\sigma_{c}(s)$ and critical case $\sigma=\sigma_{c}(s)$.
To arrive at this goal, we give some remarks on Sobolev-Lorentz spaces and extend the chain rule under Lorentz norms for the fractional Laplacian $(-\Delta)^{s/2}$ with $s\in (0,1]$ established by \cite{AT21} to any $s>0$. We then establish the various nonlinear estimates and use the contraction mapping principle based on Strichartz estimates in Lorentz spaces.

The first main result of this paper concerns with the local well-posedness for the IBNLS equation \eqref{GrindEQ__1_1_} in $H^{s}(\mathbb R^{d})$ with $0\le s<\min\left\{2+\frac{d}{2},d\right\}$ .
\begin{theorem}\label{thm 1.1.}
Let $d\in \mathbb N$, $0\le s<\min\left\{2+\frac{d}{2},d\right\}$ , $0<b<\min \left\{4,\; d-s,\; 2+\frac{d}{2}-s \right\}$ and $0<\sigma\le \sigma_{c}(s)$ with $\sigma<\infty$. If $\sigma$ is not an even integer, assume further
\footnote[1]{For $s\in \mathbb R$, $\left\lceil s\right\rceil$ denotes the minimal integer which is larger than or equal to $s$} $\sigma>\left\lceil s\right\rceil-1$.
Then for any $u_{0}\in H^{s}(\mathbb R^{d}) $, there exist $T_{\max }=T_{\max }(u_{0})>0$ and $T_{\min }=T_{\min }(u_{0})>0$ such that \eqref{GrindEQ__1_1_} has a unique, maximal solution satisfying
\begin{equation} \label{GrindEQ__1_10_}
u\in C\left(\left(-T_{\min } ,\;T_{\max } \right),\;H^{s} \right)\cap L^{\gamma (q)}_{\rm loc} \left(\left(-T_{\min } ,\;T_{\max } \right),\;H_{q,2}^{s} \right),
\end{equation}
for any $B$-admissible pair $\left(\gamma (q),q\right)$. If $T_{\max}<\infty$ (resp. if $T_{\min}<\infty$), then $\left\|u_{0}\right\|\rightarrow \infty$ as $t\uparrow T_{\max}$ (resp. as $t\downarrow -T_{\min}$).
Moreover, the solution of \eqref{GrindEQ__1_1_} depends continuously on the initial data $u_{0}$ in the following sense. There exists $0<T<T_{\max } ,\, T_{\min } $ such that if $u_{0}^{m} \to u_{0} $ in $H^{s}$ and if $u_{m} $ denotes the solution of \eqref{GrindEQ__1_1_} with the initial data $u_{0}^{m} $, then $0<T<T_{\max } \left(u_{0}^{m} \right),\, T_{\min } \left(u_{0}^{m} \right)$ for all sufficiently large $m$ and $u_{m} \to u$ in $L^{\gamma (q)} \left(\left[-T,\;T\right],\;L^{q,2}\right)$ as $m\to \infty $ for any $B$-admissible pair $\left(\gamma (q), q\right)$. Especially, if $s>0$, then $u_{m} \to u$ in $C\left(\left[-T,\;T\right],\; H^{s-\varepsilon } \right)$ for all $\varepsilon >0$.
\end{theorem}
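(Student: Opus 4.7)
The plan is to prove Theorem \ref{thm 1.1.} by a Kato-type contraction argument applied to the Duhamel operator
\begin{equation*}
\mathcal{T}u(t) = e^{it\Delta^{2}}u_{0} - i\lambda \int_{0}^{t} e^{i(t-\tau)\Delta^{2}}\bigl(|x|^{-b}|u(\tau)|^{\sigma}u(\tau)\bigr)\,d\tau
\end{equation*}
on a complete metric ball in a Strichartz space built on $B$-admissible pairs and on the Sobolev--Lorentz refinement $H^{s}_{q,2}$. The Lorentz framework is crucial because the nonlinearity carries the singular weight $|x|^{-b}$, which sits naturally in $L^{d/b,\infty}$ but not in any Lebesgue space.

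\textbf{Step 1 (Choice of function spaces).} I would first select a $B$-admissible pair $(\gamma(q),q)$ for the solution together with a companion pair $(\gamma(r),r)$ for the nonlinearity, chosen so that a Hölder estimate in Lorentz spaces absorbs $|x|^{-b}\in L^{d/b,\infty}$ and so that the remaining factor $|u|^{\sigma}u$ lies in a Strichartz space controlled by $\|u\|_{L^{\gamma(q)}_{t}H^{s}_{q,2}}$. The constraints $b<d-s$ and $b<2+d/2-s$ in the hypotheses are precisely what is required for such admissible exponents to exist, while the stricter condition $b<4$ is what guarantees that the dual Strichartz side remains in the admissible range.

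\textbf{Step 2 (Key nonlinear estimate).} The heart of the proof is to establish, in the Sobolev--Lorentz setting, an estimate of the schematic form
\begin{equation*}
\bigl\|\,|x|^{-b}|u|^{\sigma}u\,\bigr\|_{L^{\gamma(r)'}_{t}([-T,T],\,H^{s}_{r',2})} \lesssim T^{\theta}\,\|u\|_{L^{\gamma(q)}_{t}([-T,T],\,H^{s}_{q,2})}^{\sigma+1},
\end{equation*}
with $\theta>0$ in the subcritical case $\sigma<\sigma_{c}(s)$ and $\theta=0$ in the critical case $\sigma=\sigma_{c}(s)$. This follows from (i) Hölder in Lorentz spaces to split off the weight $|x|^{-b}$, (ii) the extension to all $s>0$ of the fractional chain rule under Lorentz norms (the earlier contribution of the paper), which requires the assumption $\sigma>\lceil s\rceil-1$ when $\sigma$ is not an even integer, and (iii) Sobolev--Lorentz embeddings to convert derivative information into integrability. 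A parallel Lipschitz-type difference estimate in the lower norm (without $(-\Delta)^{s/2}$) is derived in the same way.

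\textbf{Step 3 (Contraction, uniqueness, blow-up alternative, continuous dependence).} In the subcritical case, combining Strichartz estimates for $e^{it\Delta^{2}}$ in Lorentz spaces with the nonlinear estimate of Step 2 makes $\mathcal{T}$ a contraction on a ball once $T$ is small, with a local existence time depending only on $\|u_{0}\|_{H^{s}}$; this yields the blow-up alternative directly. In the critical case, the same ball argument closes by exploiting that $\|e^{it\Delta^{2}}u_{0}\|_{L^{\gamma(q)}_{t}([-T,T],H^{s}_{q,2})}\to 0$ as $T\to 0$, a consequence of dominated convergence applied to the global Strichartz bound. Uniqueness on the whole Strichartz class is obtained by a standard contraction argument on subintervals. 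Continuous dependence in $L^{\gamma(q)}([-T,T],L^{q,2})$ is immediate from the contraction, and the upgrade to $C([-T,T],H^{s-\varepsilon})$ follows by interpolating between the Strichartz convergence and the uniform $H^{s}$ bound on the approximating solutions.

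\textbf{Main obstacle.} The critical case $\sigma=\sigma_{c}(s)$ will be the delicate part: since scaling fixes all exponents and there is no gain $T^{\theta}$ from the time integral, the nonlinear estimate must be carried out with exact Hölder, chain-rule and embedding balances, and the contraction relies entirely on the smallness of the linear evolution in the Strichartz norm. The role of the Lorentz refinement is precisely to make these exact balances possible across the full range $0\le s<\min\{2+d/2,d\}$ and $0<b<\min\{4,d-s,2+d/2-s\}$, which would be unreachable in the pure Lebesgue setting.
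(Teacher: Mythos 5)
Your proposal follows essentially the same route as the paper: Lorentz-refined Strichartz estimates for $e^{it\Delta^{2}}$ (the paper's Lemma \ref{lem 4.1.}), a key nonlinear estimate with $T^{\theta}$-gain ($\theta>0$ subcritical, $\theta=0$ critical, the paper's Lemma \ref{lem 4.3.}), a contraction on a ball in $L^{\gamma(r)}(I,H^{s}_{r,2})$ with the distance taken in $L^{\gamma(r)}(I,L^{r,2})$, and the standard blow-up alternative and continuous dependence. One imprecision worth flagging in Step 2: splitting off $|x|^{-b}$ inside the $\dot{H}^{s}_{p',2}$ norm is not a plain H\"older step but uses the fractional product (Leibniz) rule in Lorentz spaces (the paper's Lemma \ref{lem 3.8.}); this produces \emph{two} terms, one of which places the derivatives on the weight, and it is there — via $(-\Delta)^{s/2}(|x|^{-b})=C|x|^{-b-s}\in L^{d/(b+s),\infty}$ — that the constraint $b<d-s$ enters, not merely through exponent admissibility as you suggest.
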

\begin{remark}\label{rem 1.3.}
\textnormal{In Theorem \ref{thm 1.1.}, the result for the $H^{s}$-critical case $\sigma=\sigma_{c}(s)$ with $0\le s<\frac{d}{2}$ is completely new. The result for the $H^{s}$-subcritical case was already generalized in our work \cite{AK211} to $0\le s <\min \left\{2+\frac{d}{2},\frac{3}{2}d\right\}$ and $0<b<\min\left\{4,d,\frac{3}{2}d-s,\frac{d}{2}+2-s\right\}$. However, in this paper, we give the simple and unified proof in both of critical and subcritical case. Moreover, we obtain more precise regularity result with respect to known ones, since $H_{q,2}^{s}$ is a strict subspace of $H_{q}^{s}$.}
\end{remark}
The second main result of this paper is about the global well-posedness and scattering for the IBNLS equation \eqref{GrindEQ__1_1_} with small initial data in $H^{s}$ with $0\le s<\min\left\{2+\frac{d}{2},d\right\}$ .
\begin{theorem}\label{thm 1.3.}
Let $d\in \mathbb N$, $0\le s<\min\left\{2+\frac{d}{2},d\right\}$, $0<b<\min \left\{4,\; d-s,\; 2+\frac{d}{2}-s \right\}$ and $\frac{8-2b}{d}\le \sigma\le \sigma_{c}(s)$ with $\sigma<\infty$. If $\sigma$ is not an even integer, assume further $\sigma>\left\lceil s\right\rceil-1$. Let
\begin{equation} \label{GrindEQ__1_11_}
s_{c}=\frac{d}{2}-\frac{4-b}{\sigma},~\tilde{s}_{c}=\frac{d}{2}-\frac{d-2s+8-2b}{2(\sigma+1)}.
\end{equation}
Then for any $u_{0} \in H^{s}(\mathbb R^{d})$ satisfying $\left\|u_{0}\right\|_{\dot{H}^{s_{c}}\cap \dot{H}^{\tilde{s}_{c}}}<\delta$ for some $\delta>0$ small enough, there exists a unique, global solution of \eqref{GrindEQ__1_1_} satisfying
\begin{equation} \label{GrindEQ__1_12_}
u\in C\left(\mathbb R,\;H^{s} \right)\cap L^{\gamma(q)} \left(\mathbb R,\;H_{q,2}^{s} \right),
\end{equation}
for any B-admissible pair $\left(\gamma (q),q\right)$. Furthermore, there exist $u_{0}^{\pm } \in H^{s} $ such that
\begin{equation} \label{GrindEQ__1_13_}
{\mathop{\lim }\limits_{t\to \pm \infty }} \left\| u(t)-e^{it\Delta^{2} } u_{0}^{\pm } \right\| _{H^{s}} =0.
\end{equation}
\end{theorem}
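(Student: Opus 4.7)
The overall plan is a global-in-time contraction mapping argument on the Duhamel formulation
\[
u(t)=e^{it\Delta^{2}}u_{0}-i\lambda\int_{0}^{t}e^{i(t-\tau)\Delta^{2}}|x|^{-b}|u|^{\sigma}u(\tau)\,d\tau,
\]
using the Strichartz estimates in Lorentz spaces together with the small-data hypothesis in the critical norm $\dot{H}^{s_{c}}\cap\dot{H}^{\tilde s_{c}}$. The two indices reflect two different scalings one has to control simultaneously: $\dot{H}^{s_{c}}$ is the scaling invariant regularity of the equation and is the natural space at which the weight $|x|^{-b}\in L^{d/b,\infty}$ is absorbed by H\"older in Lorentz spaces, whereas $\tilde s_{c}$ is an auxiliary index at which the full $H^{s}$-regularity of the nonlinearity can be recovered after applying the extended fractional chain rule. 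Concretely, I would pick two $B$-admissible pairs $(\gamma(q_{1}),q_{1})$ and $(\gamma(q_{2}),q_{2})$ adapted to $s_{c}$ and to $\tilde s_{c}$ (respectively to $s$), and work in the complete metric space
\[
X=\Bigl\{u:\ \|u\|_{L^{\gamma(q_{1})}(\mathbb R,\dot{H}^{s_{c}}_{q_{1},2})}+\|u\|_{L^{\gamma(q_{2})}(\mathbb R,H^{s}_{q_{2},2})}\le M\Bigr\},
\]
endowed with a weaker Lorentz--Strichartz metric (say $L^{\gamma(q_{1})}(L^{q_{1},2})$) so that the Lipschitz estimate closes without losing derivatives.

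The crucial step is the nonlinear bound
\[
\bigl\||x|^{-b}|u|^{\sigma}u\bigr\|_{L^{\gamma(q)'}(\mathbb R,\dot{H}^{s}_{q',2})}\lesssim\|u\|_{X}^{\sigma+1},
\]
with an analogous difference estimate. One would obtain it by H\"older's inequality in Lorentz spaces using $|x|^{-b}\in L^{d/b,\infty}$, Sobolev embedding of $\dot{H}^{s_{c}}_{q_{1},2}$ and $\dot{H}^{s}_{q_{2},2}$ into appropriate Lorentz spaces, and the fractional chain rule of \cite{AT21} extended to arbitrary $s>0$ to push $(-\Delta)^{s/2}$ inside $|u|^{\sigma}u$; the hypothesis $\sigma>\lceil s\rceil-1$ when $\sigma$ is not an even integer is precisely what the chain rule requires. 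Combined with the Strichartz estimates this yields
\[
\|\Phi(u)\|_{X}\le C\bigl(\|u_{0}\|_{\dot{H}^{s_{c}}\cap\dot{H}^{\tilde s_{c}}}+\|u\|_{X}^{\sigma+1}\bigr)
\]
and a matching Lipschitz bound on $\Phi(u)-\Phi(v)$. A standard choice $M\sim\delta$ then makes $\Phi$ a contraction on $X$, producing a unique global solution, and a further application of Strichartz extends the regularity to every $B$-admissible pair, giving \eqref{GrindEQ__1_12_}.

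For the scattering statement \eqref{GrindEQ__1_13_} I would set
\[
u_{0}^{\pm}:=u_{0}-i\lambda\int_{0}^{\pm\infty}e^{-i\tau\Delta^{2}}|x|^{-b}|u|^{\sigma}u(\tau)\,d\tau,
\]
which is well defined in $H^{s}$ because the nonlinear Strichartz norm of $|x|^{-b}|u|^{\sigma}u$ is finite over all of $\mathbb R$ by the estimate above. The identity
\[
u(t)-e^{it\Delta^{2}}u_{0}^{\pm}=i\lambda\int_{t}^{\pm\infty}e^{i(t-\tau)\Delta^{2}}|x|^{-b}|u|^{\sigma}u(\tau)\,d\tau
\]
then combined with the same nonlinear estimate restricted to $[t,\pm\infty)$ forces the $H^{s}$ norm of the left-hand side to vanish as $t\to\pm\infty$ by dominated convergence. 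The main obstacle is the delicate choice of the pairs $(\gamma(q_{i}),q_{i})$ inside the $B$-admissible range \eqref{GrindEQ__1_6_}: one has to verify that all H\"older exponents in Lorentz spaces and all Sobolev embedding indices remain admissible and positive over the whole parameter range $0\le s<\min\{2+d/2,d\}$, $0<b<\min\{4,d-s,2+d/2-s\}$ and $\frac{8-2b}{d}\le\sigma\le\sigma_{c}(s)$. The endpoint cases $\sigma=\sigma_{c}(s)$ (energy-critical-type) and $\sigma=\frac{8-2b}{d}$ (mass-critical) are exactly where the Lorentz refinements and the generalised fractional chain rule are indispensable; once these technical constraints are checked, the remainder is routine bookkeeping.
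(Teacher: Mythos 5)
Your overall plan — global contraction on Duhamel via Strichartz in Lorentz spaces, H\"older against $|x|^{-b}\in L^{d/b,\infty}$, Sobolev--Lorentz embeddings, and the extended fractional chain rule — is exactly the paper's approach, as is your treatment of scattering by showing $e^{-it\Delta^{2}}u(t)$ is Cauchy in $H^{s}$. However, there is a genuine gap in the way you set up the fixed-point space, and it is not merely a matter of bookkeeping.

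You work in a single-ball space
\[
X=\Bigl\{u:\ \|u\|_{L^{\gamma(q_{1})}(\mathbb R,\dot{H}^{s_{c}}_{q_{1},2})}+\|u\|_{L^{\gamma(q_{2})}(\mathbb R,H^{s}_{q_{2},2})}\le M\Bigr\},\qquad M\sim\delta,
\]
and then claim a bound of the form $\|\Phi(u)\|_{X}\le C\bigl(\|u_{0}\|_{\dot{H}^{s_{c}}\cap\dot{H}^{\tilde s_{c}}}+\|u\|_{X}^{\sigma+1}\bigr)$. This cannot be correct: the linear piece $e^{it\Delta^{2}}u_{0}$ contributes $\lesssim\|u_{0}\|_{H^{s}}$ to the $L^{\gamma(q_{2})}(H^{s}_{q_{2},2})$ component of $\|\Phi(u)\|_{X}$, and the theorem does \emph{not} assume $\|u_{0}\|_{H^{s}}$ is small, only $\|u_{0}\|_{\dot{H}^{s_{c}}\cap\dot{H}^{\tilde s_{c}}}<\delta$. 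Consequently one cannot choose $M\sim\delta$; the $H^{s}$ part of the Strichartz norm may be large, and then $M^{\sigma+1}$ is not small and your contraction does not close. The paper resolves this by using \emph{two separate} bounds — a small radius $\bar m$ for the critical Strichartz norm $\dot{H}^{s_{c}}_{\bar r,2}\cap\dot{H}^{\tilde s_{c}}_{\bar r,2}$ and an unconstrained (potentially large) radius $\bar M$ for the $H^{s}_{\bar r,2}$ Strichartz norm — and, crucially, by exploiting the asymmetric structure of the nonlinear estimate (its inequality \eqref{GrindEQ__4_24_}):
\[
\bigl\||u|^{\sigma}u\bigr\|_{L^{\gamma(\bar p)'}(\dot{H}^{s}_{\bar p',2})}
\lesssim\|u\|^{\sigma}_{L^{\gamma(\bar r)}(\dot{H}^{s_{c}}_{\bar r,2}\cap\dot{H}^{\tilde s_{c}}_{\bar r,2})}\,\|u\|_{L^{\gamma(\bar r)}(H^{s}_{\bar r,2})},
\]
so that the nonlinear contribution to the $H^{s}$-Strichartz norm is $\lesssim\bar m^{\sigma}\bar M$, which is $\le\frac{1}{2}\bar M$ once $\bar m^{\sigma}$ is small — without requiring $\bar M$ itself to be small. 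Similarly, the self-mapping in the critical norm uses \eqref{GrindEQ__4_25_} giving $\bar m^{\sigma+1}$, and the Lipschitz estimate uses \eqref{GrindEQ__4_26_} which also carries only the factor $\bar m^{\sigma}$. Your stated estimate, with the lumped $\|u\|_{X}^{\sigma+1}$ on the right, loses exactly this separation. A secondary, minor point: the paper takes a single $B$-admissible pair $(\gamma(\bar r),\bar r)$ for both norms, satisfying $\frac{1}{\gamma(\bar p)'}=\frac{\sigma+1}{\gamma(\bar r)}$ and $\frac{1}{\bar r}>\frac{\tilde s_{c}}{d}$, whereas you propose two pairs $q_{1},q_{2}$; this is a harmless deviation but the single-pair choice is what makes the exponent-counting (4.28)--(4.29) clean. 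Once the two-radius space and the product-structured nonlinear estimate are in place, the rest of your argument (contraction, regularity for all admissible pairs, and scattering via the Cauchy property) goes through as you describe.
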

\begin{remark}\label{rem 1.4.}
\textnormal{Since $0\le s_{c}\le \tilde{s}_{c}\le s$, we can see that $H^{s}\hookrightarrow \dot{H}^{s_{c}}\cap \dot{H}^{\tilde{s}_{c}}$ by using Lemmas \ref{lem 3.4.} and \ref{lem 3.6.}. This implies that the smallness condition on the initial data $u_{0}$ in Theorem \ref{thm 1.3.} is weaker than that of \cite{ARK222}, where the global well-posedness of \eqref{GrindEQ__1_1_} was established in the case that $\left\|u_{0}\right\|_{H^{s}}$ is sufficiently small.}
\end{remark}

This paper is organized as follows. In Section \ref{sec 2.}, we introduce some notation and recall some useful facts about Lorentz spaces. In Section \ref{sec 3.}, we give some remarks on Sobolev-Lorentz spaces and extend the fractional chain rule under Lorentz norms established by \cite{AT21}. In Section \ref{sec 4.}, we prove Theorems \ref{thm 1.1.} and \ref{thm 1.3.}.

\section{Preliminaries}\label{sec 2.}
Let us introduce some notation used in this paper. Throughout the paper, $\mathscr{F}$ denotes the Fourier transform, and the inverse Fourier transform is denoted by $\mathscr{F}^{-1}$. We also use the notation $\hat{f}$ instead of $\mathscr{F}f$. $C>0$ stands for a positive universal constant, which may be different at different places. $a\lesssim b$ means $a\le Cb$ for some constant $C>0$. $a\sim b$ expresses $a\lesssim b$ and $b\lesssim a$.
Given normed spaces $X$ and $Y$, $X\hookrightarrow Y$ means that $X$ is continuously embedded in $Y$.
For $p\in \left[1,\;\infty \right]$, $p'$ denotes the dual number of $p$, i.e. $1/p+1/p'=1$.
For $s\in \mathbb R$, we denote by $\left[s\right]$ the largest integer which is less than or equal to $s$ and by $\left\lceil s\right\rceil$ the minimal integer which is larger than or equal to $s$. For a multi-index $\alpha =\left(\alpha _{1} ,\;\alpha _{2},\;\ldots ,\;\alpha _{n} \right)$, denote
$$
D^{\alpha } =\partial _{x_{1} }^{\alpha _{1} } \cdots \partial _{x_{n} }^{\alpha _{n} } , \;\left|\alpha \right|=\left|\alpha _{1} \right|+\cdots +\;\left|\alpha _{n} \right|.
$$
For a function $f(z)$ defined for a complex variable $z$ and for a positive integer $k$, the $k$-th order derivative of $f(z)$ and its norm are defined by
$$
f^{(k)}(z):=\left(\frac{\partial ^{k} f}{\partial z^{k} } ,\; \frac{\partial ^{k} f}{\partial z^{k-1} \partial \bar{z}} ,\; {\dots},\;\frac{\partial ^{k} f}{\partial \bar{z}^{k} } \right),~|f^{\left(k\right)}(z)|:=\sum _{i=0}^{k}\left|\frac{\partial ^{k} f}{\partial z^{k-i} \partial \bar{z}^{i} } \right|,
$$
where
$$
\frac{\partial f}{\partial z}=\frac{1}{2} \left(\frac{\partial f}{\partial x} -i\frac{\partial f}{\partial y} \right),\; \frac{\partial f}{\partial \bar{z}}=\frac{1}{2} \left(\frac{\partial f}{\partial x} +i\frac{\partial f}{\partial y} \right).
$$
As in \cite{WHHG11}, for $s\in \mathbb R$ and $1<p<\infty $, we denote by $H_{p}^{s} (\mathbb R^{d} )$ and $\dot{H}_{p}^{s} (\mathbb R^{d} )$ the nonhomogeneous Sobolev space and homogeneous Sobolev space, respectively. The norms of these spaces are given as
$$
\left\| f\right\| _{H_{p}^{s} (\mathbb R^{d} )} =\left\| (I-\Delta)^{s/2} f\right\| _{L^{p} (\mathbb R^{d} )} , \;\left\| f\right\| _{\dot{H}_{p}^{s} (\mathbb R^{d} )} =\left\| (-\Delta)^{s/2} f\right\| _{L^{p} (\mathbb R^{d} )},
$$
where $(I-\Delta)^{s/2}f =\mathscr{F}^{-1} \left(1+|\xi|^{2} \right)^{s/2} \mathscr{F}f$ and $(-\Delta)^{s/2}f =\mathscr{F}^{-1} |\xi|^{s} \mathscr{F}f$. As usual, we abbreviate $H_{2}^{s} (\mathbb R^{d} )$ and $\dot{H}_{2}^{s} (\mathbb R^{d} )$ as $H^{s} (\mathbb R^{d} )$ and $\dot{H}^{s} (\mathbb R^{d} )$, respectively. For
$0<p,\; q\le \infty $, we denote by $L^{p,q} \left(\mathbb R^{d}
\right)$ the Lorentz space.
The quasi-norms of these spaces are given by
$$
\left\|f\right\|_{L^{p,q} (\mathbb R^{d})}=:\left(\int_{0}^{\infty}{\left(t^{\frac{1}{p}}f^{*}(t)\right)^{q}
\frac{dt}{t}}\right)^{\frac{1}{q}},~~\textnormal{when}~~0<q<\infty,
$$
$$
\left\|f\right\|_{L^{p,\infty} (\mathbb R^{d})}:=\sup_{t>0}t^{\frac{1}{p}}f^{*}(t),~~\textnormal{when}~~q=\infty,
$$
where $f^{*}(t)=\inf\left\{\tau:M^{n}\left(\left\{x:|f(x)|>\tau\right\}\right)\le t\right\}$, with $M^{n}$ being the Lebesgue measure in $\mathbb R^{d}$. Note that $L^{p,q} \left(\mathbb R^{d}
\right)$ is a quasi-Banach space for $0<p,\; q\le \infty $.  When $1<p<\infty$ and $1\le q \le \infty$,  $L^{p,q} \left(\mathbb R^{d}
\right)$ can be turned into a Banach space via an equivalent norm.  Note also that $L^{p,p} (\mathbb R^{d})=L^{p}
(\mathbb R^{d})$. See \cite{G14} for details. For $I\subset \mathbb R$ and $\gamma \in \left[1,\;\infty \right]$, we will use the space-time mixed space $L^{\gamma } \left(I,X\left(\mathbb R^{d}
\right)\right)$ whose norm is defined by
\[\left\| f\right\|_{L^{\gamma } \left(I,\;X(\mathbb R^{d})\right)}
=\left(\int _{I}\left\| f\right\| _{X(\mathbb R^{d})}^{\gamma } dt
\right)^{\frac{1}{\gamma } } ,\]
with a usual modification when $\gamma =\infty $, where $X(\mathbb R^{d})$
is a normed space on $\mathbb R^{d} $. If there is no confusion, $\mathbb R^{d} $ will be omitted in various function spaces.

Next, we recall some useful facts about Lorentz spaces.

\begin{lemma}[\cite{G14}]\label{lem 2.1.}
For $0<p<\infty$, $|x|^{-\frac{d}{p} } $ is in
$L^{p,\infty } (\mathbb R^{d})$ with the norm $v_{d}^{1/p}$, where $v_{d}$ is the measure of the unit ball of $\mathbb R^{d}$.
\end{lemma}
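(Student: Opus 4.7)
The plan is to verify this by direct computation from the definition of the $L^{p,\infty}$ quasi-norm, since the function $f(x)=|x|^{-d/p}$ is radial and its distribution function has an explicit closed form. The main (and only) computational task is to determine the decreasing rearrangement $f^{*}(t)$ and then take the supremum of $t^{1/p}f^{*}(t)$.

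First I would compute the distribution function $d_{f}(\tau):=M^{n}(\{x\in\mathbb{R}^{d}:|f(x)|>\tau\})$. Since $|x|^{-d/p}>\tau$ is equivalent to $|x|<\tau^{-p/d}$, the relevant set is the open ball of radius $\tau^{-p/d}$ centered at the origin, whose Lebesgue measure is $v_{d}\,\tau^{-p}$. Thus
\[
d_{f}(\tau)=v_{d}\,\tau^{-p},\qquad \tau>0.
\]

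Next I would invert this relation to obtain $f^{*}(t)$. By the definition in the preliminaries, $f^{*}(t)=\inf\{\tau>0:d_{f}(\tau)\le t\}$, and the inequality $v_{d}\tau^{-p}\le t$ is equivalent to $\tau\ge (v_{d}/t)^{1/p}$. Hence
\[
f^{*}(t)=\left(\frac{v_{d}}{t}\right)^{1/p}.
\]
Substituting into the $L^{p,\infty}$ quasi-norm and observing that $t^{1/p}f^{*}(t)=v_{d}^{1/p}$ is constant in $t$, we conclude
\[
\bigl\||x|^{-d/p}\bigr\|_{L^{p,\infty}(\mathbb{R}^{d})}=\sup_{t>0}t^{1/p}f^{*}(t)=v_{d}^{1/p},
\]
which proves the assertion. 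There is no real obstacle here; the statement is classical (cf.\ \cite{G14}) and the whole argument is a one-line reduction to the volume of a ball once the distribution function is written down correctly.
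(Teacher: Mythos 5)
Your computation is correct and is precisely the standard argument one finds in Grafakos: compute the distribution function $d_{f}(\tau)=v_{d}\tau^{-p}$ by noting $\{|x|^{-d/p}>\tau\}$ is the ball of radius $\tau^{-p/d}$, invert to get $f^{*}(t)=(v_{d}/t)^{1/p}$, and observe $t^{1/p}f^{*}(t)\equiv v_{d}^{1/p}$. The paper cites the result to \cite{G14} without giving a proof, and your proposal supplies exactly the textbook verification that reference contains, so there is nothing to flag.
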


\begin{lemma}[\cite{G14}]\label{lem 2.2.}
For all $0<p,\;r<\infty $, $0<q\le
\infty $ we have
\[
\left\| \left|f\right|^{r} \right\| _{L^{p,q} } =\left\| f\right\| _{L^{pr,qr}
}^{r}.\]
\end{lemma}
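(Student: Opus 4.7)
The plan is to reduce the identity to a pointwise statement about decreasing rearrangements, namely
\[
(|f|^{r})^{*}(t) = \bigl(f^{*}(t)\bigr)^{r} \qquad \text{for all } t>0,
\]
and then to substitute this into the definitions of the Lorentz quasi-norms and perform one change of exponents. No deep machinery is needed; the whole argument is a direct unwinding of the definitions.

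First I would establish the rearrangement identity. The distribution function of $|f|^{r}$ satisfies
\[
d_{|f|^{r}}(\tau)=M^{n}\bigl(\{x:|f(x)|^{r}>\tau\}\bigr)=M^{n}\bigl(\{x:|f(x)|>\tau^{1/r}\}\bigr)=d_{f}(\tau^{1/r}),
\]
since $t\mapsto t^{r}$ is strictly increasing on $[0,\infty)$. Plugging this into the definition of the decreasing rearrangement given in the paper, one sees that $(|f|^{r})^{*}(t)=\inf\{\tau:d_{f}(\tau^{1/r})\le t\}$, and the substitution $\tau=s^{r}$ identifies this with $\bigl(\inf\{s:d_{f}(s)\le t\}\bigr)^{r}=\bigl(f^{*}(t)\bigr)^{r}$.

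Next I would finish the proof by direct computation. For $0<q<\infty$,
\[
\bigl\||f|^{r}\bigr\|_{L^{p,q}}^{q}=\int_{0}^{\infty}\bigl(t^{1/p}(|f|^{r})^{*}(t)\bigr)^{q}\frac{dt}{t}=\int_{0}^{\infty}\bigl(t^{1/(pr)}f^{*}(t)\bigr)^{rq}\frac{dt}{t}=\|f\|_{L^{pr,qr}}^{rq},
\]
where the middle equality is the rearrangement identity combined with the algebraic rewriting $t^{q/p}=(t^{1/(pr)})^{rq}$. Raising to the $1/q$ gives the claim. The case $q=\infty$ is handled in one line:
\[
\bigl\||f|^{r}\bigr\|_{L^{p,\infty}}=\sup_{t>0}t^{1/p}\bigl(f^{*}(t)\bigr)^{r}=\Bigl(\sup_{t>0}t^{1/(pr)}f^{*}(t)\Bigr)^{r}=\|f\|_{L^{pr,\infty}}^{r}.
\]
I do not foresee any real obstacle; the only point that requires a moment of thought is the rearrangement identity, and that is standard once one tracks the effect of $\tau\mapsto\tau^{1/r}$ on the defining infimum.
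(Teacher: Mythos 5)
Your proof is correct and is the standard argument. The paper itself simply cites this as Exercise/Proposition material from Grafakos \cite{G14} without reproducing a proof, and the proof there (and essentially the only natural one) is exactly what you wrote: establish $(|f|^{r})^{*}=(f^{*})^{r}$ from the distribution function identity $d_{|f|^{r}}(\tau)=d_{f}(\tau^{1/r})$, then substitute into the Lorentz quasi-norm and match exponents, with the $q=\infty$ case handled by the same rewriting inside a supremum.
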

\begin{lemma}[\cite{G14}]\label{lem 2.3.}
Suppose $0<p\le \infty $ and $0<q<r\le \infty $. Then we have
\[\left\| f\right\| _{L^{p,r} } \le C_{p,q,r} \left\| f\right\| _{L^{p,q} } .\]
\end{lemma}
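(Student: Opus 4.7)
The plan is to reduce the case $q<r<\infty$ to the endpoint case $r=\infty$ via a straightforward interpolation of the integral defining the $L^{p,r}$ quasi-norm. The endpoint step should be handled first and separately, as it is the only place where a nontrivial estimate is needed; once it is in hand, the intermediate cases follow by pulling out a power of the $L^{p,\infty}$ quasi-norm.

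For the endpoint $r=\infty$, I would exploit the monotonicity of the decreasing rearrangement $f^{*}$. Fix $t>0$. Since $f^{*}$ is non-increasing, on $(0,t]$ we have $f^{*}(s)\ge f^{*}(t)$, and therefore
\[
\|f\|_{L^{p,q}}^{q}\;\ge\;\int_{0}^{t}\bigl(s^{1/p}f^{*}(s)\bigr)^{q}\,\frac{ds}{s}\;\ge\;f^{*}(t)^{q}\int_{0}^{t}s^{q/p}\,\frac{ds}{s}\;=\;\frac{p}{q}\,t^{q/p}f^{*}(t)^{q}.
\]
Taking $q$-th roots and then the supremum over $t>0$ yields $\|f\|_{L^{p,\infty}}\le (q/p)^{1/q}\|f\|_{L^{p,q}}$, with the obvious modification when $p=\infty$.

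For $q<r<\infty$, I would split the integrand in $\|f\|_{L^{p,r}}^{r}$ as
\[
\bigl(t^{1/p}f^{*}(t)\bigr)^{r}\;=\;\bigl(t^{1/p}f^{*}(t)\bigr)^{r-q}\bigl(t^{1/p}f^{*}(t)\bigr)^{q},
\]
bound the first factor pointwise by $\|f\|_{L^{p,\infty}}^{r-q}$, integrate the second factor to recover $\|f\|_{L^{p,q}}^{q}$, and then insert the endpoint estimate from the previous paragraph. This gives $\|f\|_{L^{p,r}}^{r}\le C\,\|f\|_{L^{p,q}}^{r-q}\|f\|_{L^{p,q}}^{q}$, so that $\|f\|_{L^{p,r}}\lesssim\|f\|_{L^{p,q}}$.

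There is no real obstacle here; the only subtlety is the bookkeeping of the constant $C_{p,q,r}$, which degenerates as $q\to p$ or as $p\to\infty$ but remains finite in the stated range. The case $p=\infty$ (where $L^{\infty,q}$ for $q<\infty$ is trivial) and $q=r=\infty$ are excluded by hypothesis, so no separate treatment is required. Since this statement is standard and the reference \cite{G14} is cited, one could alternatively invoke it directly; I would include the short argument above for completeness.
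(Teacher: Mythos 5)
Your proof is correct, and it is the standard argument found in the cited reference \cite{G14} (Grafakos, Proposition 1.4.10): first deduce $\|f\|_{L^{p,\infty}}\le (q/p)^{1/q}\|f\|_{L^{p,q}}$ from monotonicity of $f^*$, then factor $(t^{1/p}f^*(t))^r$ into an $L^{p,\infty}$ piece and an $L^{p,q}$ piece. The paper states the lemma without proof as a citation to \cite{G14}, so your argument is exactly what the reference supplies. One small slip: $p=\infty$ is not excluded by the hypothesis $0<p\le\infty$; rather, as you correctly note, $L^{\infty,q}$ with $q<\infty$ contains only the zero function, so the inequality holds vacuously in that case.
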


\begin{lemma}[H\"{o}lder inequality in Lorentz spaces, \cite{O63}]\label{lem 2.4.}
Let $1<p,p_{1},p_{2}<\infty $ and $1\le q,q_{1} ,q_{2}\le \infty $ with
$$
\frac{1}{p}=\frac{1}{p_{1}} +\frac{1}{p_{2}} ,~\frac{1}{q}=\frac{1}{q_{1} } +\frac{1}{q_{2} }.
$$
Then we have
\[\left\| fg\right\| _{L^{p,q} } \lesssim \left\| f\right\|
_{L^{p_{1},q_{1} } } \left\| g\right\| _{L^{p_{2},q_{2} } } .\]
\end{lemma}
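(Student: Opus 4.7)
The plan is to reduce the Lorentz Hölder estimate to a one-variable Hölder inequality on $(0,\infty)$ via the decreasing rearrangement. I would start from the classical pointwise bound
$$(fg)^{*}(t_{1}+t_{2}) \le f^{*}(t_{1})\,g^{*}(t_{2}),$$
which is proved by noting that if $\lambda_{1}=f^{*}(t_{1})$ and $\lambda_{2}=g^{*}(t_{2})$, then $\{|fg|>\lambda_{1}\lambda_{2}\}\subset\{|f|>\lambda_{1}\}\cup\{|g|>\lambda_{2}\}$, whose measure is at most $t_{1}+t_{2}$ by subadditivity. Specializing to $t_{1}=t_{2}=t/2$ gives the workable form $(fg)^{*}(t)\le f^{*}(t/2)\,g^{*}(t/2)$.

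Next, I would insert this estimate into the Lorentz quasi-norm, change variables $t=2s$, and split the weight using $\tfrac{1}{p}=\tfrac{1}{p_{1}}+\tfrac{1}{p_{2}}$:
$$\|fg\|_{L^{p,q}}^{q} \lesssim \int_{0}^{\infty}\bigl(s^{1/p_{1}}f^{*}(s)\bigr)^{q}\bigl(s^{1/p_{2}}g^{*}(s)\bigr)^{q}\,\frac{ds}{s}.$$
The hypothesis $\tfrac{1}{q}=\tfrac{1}{q_{1}}+\tfrac{1}{q_{2}}$ with $q,q_{1},q_{2}\ge 1$ forces $q_{i}\ge q$, so $q_{i}/q\ge 1$ and the reciprocals of $q_{1}/q$ and $q_{2}/q$ sum to one. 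Hölder's inequality for the measure $ds/s$ with these exponents applied to the functions $(s^{1/p_{1}}f^{*})^{q}$ and $(s^{1/p_{2}}g^{*})^{q}$ yields
$$\|fg\|_{L^{p,q}} \lesssim \Bigl(\int_{0}^{\infty}\!(s^{1/p_{1}}f^{*}(s))^{q_{1}}\tfrac{ds}{s}\Bigr)^{1/q_{1}}\!\Bigl(\int_{0}^{\infty}\!(s^{1/p_{2}}g^{*}(s))^{q_{2}}\tfrac{ds}{s}\Bigr)^{1/q_{2}} = \|f\|_{L^{p_{1},q_{1}}}\|g\|_{L^{p_{2},q_{2}}}.$$

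The endpoint cases in which some $q_{i}=\infty$ are handled separately by pulling out the relevant $L^{\infty}$ factor as a supremum before integrating the other factor in $s$; the fully degenerate case $q=q_{1}=q_{2}=\infty$ is immediate from the pointwise rearrangement bound combined with $\tfrac{1}{p}=\tfrac{1}{p_{1}}+\tfrac{1}{p_{2}}$. The main obstacle here is not conceptual but organizational: one has to verify that Hölder applies uniformly across the allowed range of exponents and to absorb the universal dyadic factor coming from the substitution $t=2s$ into the implicit constant (which is why the conclusion is stated with $\lesssim$ rather than $\le$). A sharper constant, should it be needed elsewhere, can be obtained instead through the Hardy--Littlewood estimate $(fg)^{**}(t)\le f^{**}(t)g^{*}(t)+g^{**}(t)f^{*}(t)$ together with the equivalence between the $f^{*}$-norm and the $f^{**}$-norm on $L^{p,q}$ with $p>1$, but this refinement is not required for the statement of Lemma~\ref{lem 2.4.}.
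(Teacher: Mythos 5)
Your proof is correct. The paper itself gives no argument for this lemma --- it simply cites O'Neil's 1963 paper --- so there is no internal proof to compare against. Your route, via the pointwise submultiplicativity $(fg)^{*}(t_{1}+t_{2})\le f^{*}(t_{1})\,g^{*}(t_{2})$ (proved by the distribution-function inclusion $\{|fg|>\lambda_{1}\lambda_{2}\}\subset\{|f|>\lambda_{1}\}\cup\{|g|>\lambda_{2}\}$), specializing to $t_{1}=t_{2}=t/2$, splitting the weight $s^{1/p}=s^{1/p_{1}}s^{1/p_{2}}$, and applying one-dimensional H\"older with conjugate exponents $q_{1}/q$ and $q_{2}/q$, is a standard and clean derivation; your handling of the endpoint cases $q_{i}=\infty$ is also fine, as is absorbing the dyadic factor $2^{q/p}$ into the implicit constant. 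O'Neil's original proof instead works at the level of the averaged rearrangement $f^{**}$, where $p>1$ makes the $f^{*}$- and $f^{**}$-based norms equivalent; you already flag this refinement, which would tighten the constant but is not needed for the stated $\lesssim$ bound. One minor point worth making explicit: the step $|\{|f|>f^{*}(t_{1})\}|\le t_{1}$ relies on the right-continuity of the distribution function, which is what makes the infimum in the definition of $f^{*}$ attained in the relevant sense.
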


\begin{lemma}[Young inequality in Lorentz spaces, \cite{O63}]\label{lem 2.5.}
Let $1<p,p_{1},p_{2}<\infty $ and $1\le q,q_{1} ,q_{2}\le \infty $ with
$$
1+\frac{1}{p}=\frac{1}{p_{1}} +\frac{1}{p_{2}} ,~\frac{1}{q}=\frac{1}{q_{1} } +\frac{1}{q_{2} }.
$$
Then we have
\[\left\| f*g\right\| _{L^{p,q} } \lesssim \left\| f\right\| _{L^{p_{1},q_{1} } } \left\| g\right\| _{L^{p_{2},q_{2} } } .\]
\end{lemma}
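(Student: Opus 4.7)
The plan is to prove this via the classical O'Neil pointwise rearrangement inequality, which reduces the bilinear convolution estimate to one-dimensional weighted inequalities in the variable $t>0$.

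The first step is to recall (or establish) O'Neil's pointwise bound
\[
(f*g)^{**}(t) \;\le\; t\, f^{**}(t)\, g^{**}(t) \;+\; \int_{t}^{\infty} f^{*}(s)\, g^{*}(s)\, ds, \qquad t>0,
\]
where $h^{**}(t):=\tfrac{1}{t}\int_{0}^{t} h^{*}(s)\, ds$ is the maximal function of the decreasing rearrangement. This is obtained by decomposing $f=f_{1}+f_{2}$ and $g=g_{1}+g_{2}$ at the cut-off levels $f^{*}(t)$ and $g^{*}(t)$, applying the endpoint Young inequalities $\|f_{1}*g_{1}\|_{L^{\infty}} \le \|f_{1}\|_{L^{1}}\|g_{1}\|_{L^{\infty}}$ on the bounded pieces and $\|f_{2}*g\|_{L^{1}}\le \|f_{2}\|_{L^{1}}\|g\|_{L^{1}}$ on the tails, and summing; the details are standard and I would only sketch them.

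Since $1<p<\infty$, the Hardy--Littlewood--P\'olya equivalence shows that $\|h\|_{L^{p,q}}$ is comparable to $\bigl\|t^{1/p} h^{**}(t)\bigr\|_{L^{q}(dt/t)}$, so it suffices to bound the right-hand side of O'Neil's inequality in that mixed norm. For the first summand, write $t\cdot t^{1/p} = t^{1/p_{1}} \cdot t^{1/p_{2}}$ using $1+1/p=1/p_{1}+1/p_{2}$ and apply H\"older's inequality in $L^{q}(dt/t)$ with exponents $q_{1},q_{2}$ (using $1/q=1/q_{1}+1/q_{2}$) to recover $\|f\|_{L^{p_{1},q_{1}}}\|g\|_{L^{p_{2},q_{2}}}$ up to equivalence. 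For the tail summand, the key observation is that $t^{1/p}\int_{t}^{\infty} f^{*}(s)g^{*}(s)\,ds$ is controlled, via the weighted Hardy inequality on $(0,\infty)$ (valid because $1/p>0$), by a constant multiple of $\bigl\|s^{1/p} f^{*}(s)g^{*}(s)\bigr\|_{L^{q}(ds/s)}$; another application of H\"older with the same split of the exponents completes the estimate.

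The main obstacle I anticipate is purely technical: handling the endpoint $q=\infty$ (or $q_{i}=\infty$), where the $L^{q}(dt/t)$ quasi-norm becomes a supremum and the Hardy step must be carried out by direct monotonicity of $t\mapsto \int_{t}^{\infty}\cdots$ rather than by an integral Hardy inequality, and verifying that the combined H\"older application really consumes the two index conditions $1+1/p=1/p_{1}+1/p_{2}$ and $1/q=1/q_{1}+1/q_{2}$ simultaneously. Once these bookkeeping issues are addressed, combining the two contributions and undoing the $h^{**}$-to-$h^{*}$ equivalence yields $\|f*g\|_{L^{p,q}}\lesssim \|f\|_{L^{p_{1},q_{1}}}\|g\|_{L^{p_{2},q_{2}}}$, as claimed. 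Since this is a classical result of O'Neil, I would present the argument compactly and refer to \cite{O63} for the fine details.
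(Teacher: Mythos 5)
Your outline is correct and is precisely the classical O'Neil argument: the pointwise rearrangement bound $(f*g)^{**}(t)\le t\,f^{**}(t)g^{**}(t)+\int_{t}^{\infty}f^{*}(s)g^{*}(s)\,ds$, the equivalence $\left\|h\right\|_{L^{p,q}}\sim\bigl\|t^{1/p}h^{**}(t)\bigr\|_{L^{q}(dt/t)}$ for $p>1$, and then H\"older plus the dual Hardy inequality (with exponent $1/p>0$) to absorb the two index conditions. The paper itself offers no proof of this lemma --- it is stated as a known result with a citation to \cite{O63} --- so your reconstruction supplies exactly the argument the authors are deferring to, and the technical caveats you flag (the $q=\infty$ endpoints, handled by monotonicity rather than the integral Hardy inequality) are genuine but routine.
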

\section{Sobolev-Lorentz spaces and Fractional chain rule}\label{sec 3.}
In this section, we give some remarks on Sobolev-Lorentz spaces and extend the fractional chain rule under Lorentz norms established by \cite{AT21}.
\subsection{Some remarks on Sobolev-Lorentz spaces}
In this subsection, we state the definition of Sobolev-Lorentz spaces and discuss their basic properties.
The Sobolev-Lorentz spaces are defined as follows. See also \cite{AT21, HYZ12} for example.
\begin{definition}\label{defn_3.1.}
\textnormal{Let $s\in \mathbb R$, $1<p<\infty$ and $1\le q \le \infty$. The homogeneous Sobolev-Lorentz space $\dot{H}^{s}_{p,q}(\mathbb R^{d})$ is defined as the set of functions satisfying $(-\Delta)^{s/2}f \in L^{p,q}(\mathbb R^{d})$, equipped with the norm
\[\left\|f\right\|_{\dot{H}^{s}_{p,q}(\mathbb R^{d})}:=\left\|(-\Delta)^{s/2}f\right\|_{L^{p,q}(\mathbb R^{d})}.\]
The nonhomogeneous Sobolev-Lorentz space ${H}^{s}_{p,q}(\mathbb R^{d})$ is defined as the set of functions satisfying $(I-\Delta)^{s/2}f \in L^{p,q}(\mathbb R^{d})$, equipped with the norm
\[\left\|f\right\|_{{H}^{s}_{p,q}(\mathbb R^{d})}:=\left\|(I-\Delta)^{s/2}f \right\|_{L^{p,q}(\mathbb R^{d})}.\]}
\end{definition}

Using the properties of Lorentz spaces, we immediately have
\begin{lemma}\label{lem 3.2.}
Let $s\in \mathbb R$, $1<p<\infty$ and $1\le q_{1}\le q_{2}\le \infty$. Then we have

$(a)$ $\dot{H}^{s}_{p,q_{1}} \hookrightarrow \dot{H}^{s}_{p,q_{2}},~H^{s}_{p,q_{1}} \hookrightarrow H^{s}_{p,q_{2}}$.

$(b)$ $\dot{H}^{s}_{p,p}=\dot{H}^{s}_{p}$,~$H^{s}_{p,p}=H^{s}_{p}$.
\end{lemma}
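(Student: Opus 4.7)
The plan is to reduce both assertions directly to properties of Lorentz spaces already recorded in Section~\ref{sec 2.}, namely the monotonicity of the Lorentz quasi-norm in the second index (Lemma~\ref{lem 2.3.}) and the identification $L^{p,p}=L^{p}$. Since the Sobolev-Lorentz norms are, by Definition~\ref{defn_3.1.}, simply Lorentz norms of the (Riesz or Bessel) potentials of $f$, both parts will amount to invoking these facts on the function $(-\Delta)^{s/2}f$ (resp.\ $(I-\Delta)^{s/2}f$).

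For part (a), I would unfold the definition: $\|f\|_{\dot{H}^{s}_{p,q_{2}}} = \|(-\Delta)^{s/2}f\|_{L^{p,q_{2}}}$. Since $1\le q_{1}\le q_{2}\le\infty$ and $1<p<\infty$, Lemma~\ref{lem 2.3.} applied to $(-\Delta)^{s/2}f$ gives
\[
\|(-\Delta)^{s/2}f\|_{L^{p,q_{2}}} \;\le\; C_{p,q_{1},q_{2}}\,\|(-\Delta)^{s/2}f\|_{L^{p,q_{1}}},
\]
which is precisely the continuous embedding $\dot{H}^{s}_{p,q_{1}}\hookrightarrow \dot{H}^{s}_{p,q_{2}}$. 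The nonhomogeneous statement $H^{s}_{p,q_{1}}\hookrightarrow H^{s}_{p,q_{2}}$ follows by repeating the same argument with $(I-\Delta)^{s/2}f$ in place of $(-\Delta)^{s/2}f$.

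For part (b), I would use the identity $L^{p,p}(\mathbb R^{d})=L^{p}(\mathbb R^{d})$ (with equivalent norms) recalled at the end of Section~\ref{sec 2.}. Combined with the definitions of $\dot{H}^{s}_{p,p}$, $H^{s}_{p,p}$ and of the classical Sobolev spaces $\dot{H}^{s}_{p}$, $H^{s}_{p}$ given earlier via $\|(-\Delta)^{s/2}f\|_{L^{p}}$ and $\|(I-\Delta)^{s/2}f\|_{L^{p}}$, this immediately yields the equalities $\dot{H}^{s}_{p,p}=\dot{H}^{s}_{p}$ and $H^{s}_{p,p}=H^{s}_{p}$.

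There is no substantial obstacle here: the lemma is a purely formal consequence of transporting the two basic Lorentz-space facts through the bounded operators $(-\Delta)^{s/2}$ and $(I-\Delta)^{s/2}$ used in the definition of Sobolev-Lorentz spaces. The only (very minor) point requiring care is the quantifier range $1<p<\infty$, $1\le q_{1}\le q_{2}\le\infty$, which ensures that all the norms involved are genuine norms (or equivalent to norms) so that the embeddings are meaningful, and this is exactly the hypothesis under which Lemma~\ref{lem 2.3.} is stated.
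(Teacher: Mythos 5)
Your argument is correct and is exactly the one the paper has in mind: the paper simply states that Lemma~\ref{lem 3.2.} follows immediately from the properties of Lorentz spaces, and your unfolding of the definitions to apply Lemma~\ref{lem 2.3.} for part (a) and the identity $L^{p,p}=L^{p}$ for part (b) is the intended (and only) content.
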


If $s\ge 0$, then we have the following equivalent norms of homogeneous and nonhomogeneous Sobolev-Lorentz spaces.
\begin{lemma}[\cite{AK213}]\label{lem 3.3.}
Let $s\ge 0$, $1<p<\infty $ and $1\le q\le \infty$. Then we have
\[ \left\|f\right\|_{\dot{H}_{p,q}^{s} }\sim \sum _{\left|\alpha \right|=[s]}\left\|D^{\alpha }f\right\| _{\dot{H}_{p,q}^{s-[s]}}.\]
\end{lemma}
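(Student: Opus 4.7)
The plan is to split $s$ as $s = k + \theta$ with $k = [s] \in \mathbb{N}_0$ and $\theta = s - k \in [0,1)$, and to reduce both inequalities to the $L^{p,q}$-boundedness of the Riesz transforms $R_j = \partial_j (-\Delta)^{-1/2}$ and their finite products $R^\alpha := R_1^{\alpha_1}\cdots R_d^{\alpha_d}$. Since each $R_j$ is a classical Calder\'on--Zygmund operator, it is of weak type $(1,1)$ and strongly bounded on every $L^r$ with $1<r<\infty$; real interpolation between two strong $(r,r)$ bounds with $r$ on either side of $p$ then yields
\[\|R^\alpha f\|_{L^{p,q}} \lesssim \|f\|_{L^{p,q}}, \qquad 1<p<\infty,\ 1\le q\le \infty.\]
This is the only external fact I will rely on (everything else is algebraic manipulation of Fourier multipliers together with the triangle inequality in $L^{p,q}$).

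For the estimate $\sum_{|\alpha|=k}\|D^\alpha f\|_{\dot{H}^\theta_{p,q}}\lesssim \|f\|_{\dot{H}^s_{p,q}}$, I would observe the Fourier-side identity $(-\Delta)^{\theta/2} D^\alpha = R^\alpha (-\Delta)^{s/2}$ whenever $|\alpha|=k$ (each $\partial_j$ contributes a factor $R_j(-\Delta)^{1/2}$, and the remaining $(-\Delta)^{k/2+\theta/2}$ is exactly $(-\Delta)^{s/2}$). Hence
\[\|D^\alpha f\|_{\dot{H}^\theta_{p,q}} = \|R^\alpha (-\Delta)^{s/2} f\|_{L^{p,q}} \lesssim \|(-\Delta)^{s/2} f\|_{L^{p,q}} = \|f\|_{\dot{H}^s_{p,q}},\]
and summing over the finitely many multi-indices with $|\alpha|=k$ gives the bound.

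For the converse, I would split on the parity of $k$. When $k$ is even, $(-\Delta)^{k/2} = \sum_{|\alpha|=k} c_\alpha D^\alpha$ is a genuine differential operator, so
\[(-\Delta)^{s/2}f = (-\Delta)^{\theta/2}(-\Delta)^{k/2}f = \sum_{|\alpha|=k} c_\alpha (-\Delta)^{\theta/2} D^\alpha f,\]
and the triangle inequality in $L^{p,q}$ closes the argument. When $k$ is odd, I would exploit the identity $(-\Delta)^{1/2} = -\sum_{j=1}^d R_j \partial_j$, which follows from $\sum_j R_j^2 = -I$ together with $R_j\partial_j = \partial_j^2(-\Delta)^{-1/2}$. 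Writing $(-\Delta)^{k/2} = (-\Delta)^{(k-1)/2}(-\Delta)^{1/2}$ with $k-1$ even, expanding the first factor as derivatives, and absorbing the trailing $\partial_j$ into $D^\alpha$ expresses $(-\Delta)^{s/2}f$ as a finite linear combination of terms $R_j\bigl((-\Delta)^{\theta/2}D^\alpha f\bigr)$ with $|\alpha|=k$; the $L^{p,q}$-boundedness of the $R_j$ then yields the required bound. The main obstacle I foresee is simply to confirm that the Riesz transform bounds on $L^{p,q}$ persist for the endpoint values $q=1$ and $q=\infty$; once that is granted by the Marcinkiewicz interpolation step above, the rest is routine.
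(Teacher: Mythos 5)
Your argument is correct, and it is the classical one. Note first that the paper does not actually prove Lemma~\ref{lem 3.3.}; it is cited from \cite{AK213}, so there is no in-paper proof to compare against. What you have written is the standard Riesz-transform decomposition, and it is consistent with the methodology the paper does use in the neighboring Lemma~\ref{lem 3.4.} (establish a multiplier bound on $L^p$ by Mihlin or Calder\'on--Zygmund theory, then transfer it to $L^{p,q}$ by real interpolation using $\bigl(L^{p_0},L^{p_1}\bigr)_{\vartheta,q}=L^{p,q}$).

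Both directions of your proof are sound. The forward direction rests on the identity $D^{\alpha}=R^{\alpha}(-\Delta)^{|\alpha|/2}$ and commutativity of Fourier multipliers, giving $(-\Delta)^{\theta/2}D^{\alpha}f=R^{\alpha}(-\Delta)^{s/2}f$ for $|\alpha|=[s]$; boundedness of $R^{\alpha}$ on $L^{p,q}$ then gives the estimate. For the converse, the even-$k$ case is the multinomial expansion of $|\xi|^{k}=(\sum_j\xi_j^2)^{k/2}$, and the odd-$k$ case uses $(-\Delta)^{1/2}=-\sum_j R_j\partial_j$ (equivalently $\sum_j R_j^2=-I$), which you verify correctly; the trailing $\partial_j$ raises $|\beta|=k-1$ to $|\alpha|=k$ and the residual $R_j$ is absorbed by its $L^{p,q}$ bound. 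Two small remarks: the weak-type $(1,1)$ bound you mention is not actually needed, since two strong $L^{r}$ bounds on either side of $p$ already suffice for the real-interpolation step; and the triangle inequality in $L^{p,q}$ is legitimate because for $1<p<\infty$, $1\le q\le\infty$ the quasi-norm is equivalent to a genuine norm, as the paper itself notes in Section~\ref{sec 2.}. No gaps.
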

\begin{lemma}\label{lem 3.4.}
Let $s\ge 0$, $1<p<\infty $ and $1\le q\le \infty$. Then we have $H^{s}_{p,q}=L^{p,q}\cap \dot{H}^{s}_{p,q}$ with
\[\left\|f\right\|_{H^{s}_{p,q}}\sim \left\|f\right\|_{L^{p,q}}+\left\|f\right\|_{\dot{H}^{s}_{p,q}}.\]
\end{lemma}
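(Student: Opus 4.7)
The plan is to reduce the claimed equivalence to the $L^{p,q}$-boundedness of a small family of Fourier multipliers. Specifically, I would introduce
\[
m_{1}(\xi)=\frac{(1+|\xi|^{2})^{s/2}}{1+|\xi|^{s}},\quad m_{2}(\xi)=\frac{|\xi|^{s}}{(1+|\xi|^{2})^{s/2}},\quad m_{3}(\xi)=\frac{1}{(1+|\xi|^{2})^{s/2}},
\]
and record the tautological operator identities
\[
(I-\Delta)^{s/2}=m_{1}(D)\bigl(I+(-\Delta)^{s/2}\bigr),\quad I=m_{3}(D)(I-\Delta)^{s/2},\quad (-\Delta)^{s/2}=m_{2}(D)(I-\Delta)^{s/2},
\]
valid by symbol calculus on the Schwartz class (and on any function for which both sides make sense). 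Each $m_{j}$ is smooth away from the origin, bounded on $\mathbb R^{d}$ (numerators and denominators have matching asymptotics both at $\xi=0$ and at $|\xi|\to\infty$), and direct differentiation gives $|\xi|^{|\alpha|}|\partial^{\alpha}m_{j}(\xi)|\lesssim 1$ for every multi-index $\alpha$. Hence each $m_{j}$ satisfies the Mikhlin--Hörmander condition.

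Next I would pass from classical $L^{p}$-boundedness to Lorentz-space boundedness. By Mikhlin's theorem each operator $m_{j}(D)$ is bounded $L^{r}\to L^{r}$ for every $1<r<\infty$. Given any $1<p<\infty$ and $1\le q\le \infty$, I pick $1<r_{1}<p<r_{2}<\infty$ and use that $L^{p,q}$ is a real interpolation space between $L^{r_{1}}$ and $L^{r_{2}}$; the Marcinkiewicz real interpolation theorem then yields $m_{j}(D):L^{p,q}\to L^{p,q}$ boundedness for $j=1,2,3$.

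Feeding these bounds into the three identities gives at once
\[
\|(I-\Delta)^{s/2}f\|_{L^{p,q}}\lesssim \|f\|_{L^{p,q}}+\|(-\Delta)^{s/2}f\|_{L^{p,q}}
\]
from the first identity, and
\[
\|f\|_{L^{p,q}}+\|(-\Delta)^{s/2}f\|_{L^{p,q}}\lesssim \|(I-\Delta)^{s/2}f\|_{L^{p,q}}
\]
from the other two. Combined, these are exactly the claimed norm equivalence and also yield the set identity $H^{s}_{p,q}=L^{p,q}\cap \dot{H}^{s}_{p,q}$ (the $\subset$ direction uses the second line applied to $(I-\Delta)^{-s/2}$ of any $L^{p,q}$-element, and $\supset$ uses the first).

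The only delicate step is verifying the Mikhlin condition for $m_{2}$ near the origin, since $|\xi|^{s}$ is not smooth at $\xi=0$ for non-integer $s$. However, $m_{2}$ and each of its $\alpha$-th derivatives are pointwise controlled there by $|\xi|^{s-|\alpha|}$, so $|\xi|^{|\alpha|}|\partial^{\alpha}m_{2}(\xi)|\lesssim |\xi|^{s}$, which is bounded uniformly on a neighbourhood of the origin and decays appropriately at infinity. Once this is checked, the rest is routine multiplier bookkeeping and interpolation.
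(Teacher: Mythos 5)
Your proposal is correct and follows essentially the same route as the paper: you introduce the same three multipliers (called $\rho_{1},\rho_{2},\rho_{3}$ in the paper), verify the Mikhlin condition to get $L^{r}$-boundedness, and then pass to $L^{p,q}$ via real interpolation (the generalized Marcinkiewicz theorem), exactly as the paper does. The extra care you take in checking the Mikhlin condition for $m_{2}$ near the origin is a welcome detail that the paper leaves implicit.
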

\begin{proof}
Using the well-known Mihlin multiplier theorem (see e.g. Proposition 1.12 of \cite{WHHG11}), we can see that $\rho_{1}(\xi):=\frac{(1+|\xi|^2)^{s/2}}{1+|\xi|^{s}}$ is a multiplier on $L^{p}$ for all $1<p<\infty$. We can deduce from the generalized Marcinkiewicz interpolation theorem (see Theorem 5.3.2 in \cite{BL76}) that $\rho_{1} (\xi)$ is a multiplier on $L^{p,q}$ for any $1<p<\infty$ and $1\le q \le \infty$. Thus we have
\begin{eqnarray}\begin{split}\nonumber
\left\|f\right\|_{H^{s}_{p,q}}&=\left\|\mathscr{F}^{-1}(1+|\xi|^2)^{\frac{s}{2}}\mathscr{F}f\right\|_{L^{p,q}}=\left\|\mathscr{F}^{-1}\rho_{1} (\xi)(1+|\xi|^{s})\mathscr{F}f
\right\|_{L^{p,q}}\\
&\lesssim \left\|\mathscr{F}^{-1}(1+|\xi|^{s})\mathscr{F}f\right\|_{L^{p,q}} \le \left\|f\right\|_{L^{p,q}}+\left\|f\right\|_{\dot{H}^{s}_{p,q}}.
\end{split}\end{eqnarray}
Conversely, we can also see that
\begin{equation}\label{GrindEQ__3_1_}
\rho_{2}^{s}(\xi):=\frac{1}{(1+|\xi|^2)^{\frac{s}{2}}},~\rho_{3}^{s}(\xi):=\frac{|\xi|^{s}}{(1+|\xi|^2)^{\frac{s}{2}}}
\end{equation}
are multipliers on $L^{p}$, and therefore on $L^{p,q}$ for any $1<p<\infty$ and $1\le q \le \infty$. Hence, we immediately get
\begin{eqnarray}\begin{split}\nonumber
\left\|f\right\|_{L^{p,q}}+\left\|f\right\|_{\dot{H}^{s}_{p,q}}&=
\left\|\mathscr{F}^{-1}\rho_{2}(\xi)(1+|\xi|^2)^{\frac{s}{2}}\mathscr{F}f\right\|_{L^{p,q}}+\left\|\mathscr{F}^{-1}\rho_{3}(\xi)(1+|\xi|^2)^{\frac{s}{2}}\mathscr{F}f\right\|_{L^{p,q}}\\
&\lesssim \left\|\mathscr{F}^{-1}(1+|\xi|^2)^{\frac{s}{2}}\mathscr{F}f\right\|_{L^{p,q}} \le \left\|f\right\|_{H^{s}_{p,q}},
\end{split}\end{eqnarray}
this completes the proof.
\end{proof}
We also have the following embeddings on the homogeneous and nonhomogeneous Sobolev-Lorentz spaces.
\begin{lemma}\label{lem 3.5.}
Let $-\infty < s_{2} \le s_{1} <\infty $ and $1<p_{1} \le p_{2} <\infty $ with $s_{1} -\frac{d}{p_{1} } =s_{2} -\frac{d}{p_{2} } $. Then for any $1\le q\le \infty$, there holds the embeddings:
$$
\dot{H}_{p_{1},q}^{s_{1} } \hookrightarrow \dot{H}_{p_{2},q}^{s_{2}},~H_{p_{1},q}^{s_{1} } \hookrightarrow H_{p_{2},q}^{s_{2}}
$$
\end{lemma}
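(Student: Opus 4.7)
The plan is to reduce both embeddings to the mapping properties of a fractional integration operator on Lorentz spaces, and to read those mapping properties off the Young-type inequality (Lemma \ref{lem 2.5.}) combined with the membership $|x|^{-(d-\alpha)}\in L^{d/(d-\alpha),\infty}$ supplied by Lemma \ref{lem 2.1.}.

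First I would dispose of the trivial case $s_1=s_2$: the scaling identity $s_1-d/p_1=s_2-d/p_2$ forces $p_1=p_2$ and both embeddings become equalities. So we may assume $\alpha:=s_1-s_2>0$, in which case the scaling identity reads $\alpha/d=1/p_1-1/p_2$. Because $p_2<\infty$, this yields $0<\alpha<d/p_1\le d$, precisely the range in which the Riesz and Bessel kernels of order $\alpha$ make sense as convolution kernels that send $L^{p_1,q}$ into $L^{p_2,q}$.

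For the homogeneous embedding I would use the Riesz-potential representation
\[
(-\Delta)^{s_2/2}f=(-\Delta)^{-\alpha/2}\bigl[(-\Delta)^{s_1/2}f\bigr]=c_{d,\alpha}\,|x|^{\alpha-d}\ast\bigl[(-\Delta)^{s_1/2}f\bigr].
\]
Taking $L^{p_2,q}$-norms and invoking Lemma \ref{lem 2.5.} with the two factors in $L^{d/(d-\alpha),\infty}$ and $L^{p_1,q}$ respectively, together with $\bigl\||x|^{\alpha-d}\bigr\|_{L^{d/(d-\alpha),\infty}}\lesssim 1$ from Lemma \ref{lem 2.1.}, delivers $\|f\|_{\dot H^{s_2}_{p_2,q}}\lesssim\|f\|_{\dot H^{s_1}_{p_1,q}}$. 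The arithmetic matches the hypotheses of Lemma \ref{lem 2.5.} exactly: $1+1/p_2=(d-\alpha)/d+1/p_1$ and $1/q=1/\infty+1/q$, while $1<p_1\le p_2<\infty$ and $1<d/(d-\alpha)$ keep every index in the admissible range.

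For the nonhomogeneous embedding the same strategy goes through with the Bessel kernel $G_\alpha:=\mathscr{F}^{-1}(1+|\xi|^2)^{-\alpha/2}$ replacing the Riesz kernel, via
\[
(I-\Delta)^{s_2/2}f=G_\alpha\ast\bigl[(I-\Delta)^{s_1/2}f\bigr].
\]
The only ingredient beyond the previous step is the weak-type membership $G_\alpha\in L^{d/(d-\alpha),\infty}$, which I expect to be the main technical point of the proof. It follows from the classical pointwise bounds $G_\alpha(x)\lesssim |x|^{\alpha-d}$ as $|x|\to 0$ and $G_\alpha(x)\lesssim e^{-|x|/2}$ as $|x|\to\infty$ (see e.g.\ Chapter V of Stein, \emph{Singular Integrals and Differentiability Properties of Functions}); the near-origin bound alone carries the weak-type estimate, since the exponentially decaying tail lies in every $L^r$. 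Once this kernel estimate is in hand, Lemma \ref{lem 2.5.} again closes the argument by exactly the same arithmetic as in the homogeneous case.
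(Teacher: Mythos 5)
Your argument is correct, and the homogeneous half coincides with the paper's: both write $(-\Delta)^{s_2/2}f$ as the Riesz potential $|x|^{-(d-\alpha)}\ast(-\Delta)^{s_1/2}f$ and then apply Lemma \ref{lem 2.1.} plus the Lorentz-space Young inequality (Lemma \ref{lem 2.5.}). The nonhomogeneous half is where you diverge. The paper factors $(1+|\xi|^2)^{-\alpha/2}=\rho_3^{\alpha}(\xi)\,|\xi|^{-\alpha}$, where $\rho_3^{\alpha}(\xi)=|\xi|^{\alpha}/(1+|\xi|^2)^{\alpha/2}$ was already shown (Mihlin plus Marcinkiewicz interpolation, in the proof of Lemma \ref{lem 3.4.}) to be a bounded multiplier on $L^{p,q}$; this peels off a harmless multiplier and then runs the identical Riesz-potential argument a second time. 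You instead work with the Bessel kernel $G_\alpha=\mathscr{F}^{-1}(1+|\xi|^2)^{-\alpha/2}$ directly, invoking the classical pointwise bounds $G_\alpha(x)\lesssim|x|^{\alpha-d}$ near the origin and exponential decay at infinity to conclude $G_\alpha\in L^{d/(d-\alpha),\infty}$, and then apply the same Young inequality. Both routes are sound; the paper's is fully self-contained in the tools it has already set up, while yours imports a standard kernel estimate from outside (Stein) but in exchange avoids the multiplier step and makes the two halves of the proof structurally identical. You also explicitly dispose of the degenerate case $\alpha=0$ (forcing $p_1=p_2$), which the paper silently skips even though its identity $\mathscr{F}^{-1}(|\xi|^{-\beta})=C_{d,\beta}|x|^{-(d-\beta)}$ requires $\beta>0$ --- a small point in your favour.
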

\begin{proof}
Using Lemmas \ref{lem 2.1.}, Lemma \ref{lem 2.5.} and the fact
$$
\mathscr{F}^{-1}(|\xi|^{-\beta})=C_{d,\beta}|x|^{-(d-\beta)},~\textnormal{for}~\beta\in (0,d),
$$
we have
\begin{eqnarray}\begin{split}\nonumber
\left\|f\right\|_{\dot{H}_{p_{2},q}^{s_{2}} }&=\left\|\mathscr{F}^{-1}(|\xi|^{-(s_{1}-s_{2})})*(-\Delta)^{s_{1}/2}f\right\|_{L^{p_{2},q}}
=C \left\||x|^{-(d+s_{2}-s_{1})}*(-\Delta)^{s_{1}/2}f\right\|_{L^{p_{2},q}}\\
& \lesssim\left\||x|^{-(d+s_{2}-s_{1})}\right\|_{L^{\bar{p},\infty}}
\left\|(-\Delta)^{s_{1}/2}f\right\|_{L^{p_{1},q}}
\lesssim\left\|f\right\|_{\dot{H}_{p_{1},q}^{s_{1}}},
\end{split}\end{eqnarray}
and
\begin{eqnarray}\begin{split}\nonumber
\left\|f\right\|_{H_{p_{2},q}^{s_{2}} }&=\left\|\mathscr{F}^{-1}\rho_{3}^{s_{1}-s_{2}}(\xi)|\xi|^{-(s_{1}-s_{2})}(1+|\xi|^{2})^{\frac{s_{1}}{2}}\hat{f}\right\|_{L^{p_{2},q}}\\
&\lesssim \left\|\mathscr{F}^{-1}|\xi|^{-(s_{1}-s_{2})}(1+|\xi|^{2})^{\frac{s_{1}}{2}}\hat{f}\right\|_{L^{p_{2},q}}\\
&=C \left\||x|^{-(d+s_{2}-s_{1})}*(I-\Delta)^{\frac{s_{1}}{2}}f\right\|_{L^{p_{2},q}}\\
& \lesssim\left\||x|^{-(d+s_{2}-s_{1})}\right\|_{L^{\bar{p},\infty}}\left\|(I-\Delta)^{\frac{s_{1}}{2}}f\right\|_{L^{p_{1},q}}
\lesssim\left\|f\right\|_{H_{p_{1},q}^{s_{1}}},
\end{split}\end{eqnarray}
where $\frac{1}{\bar{p}}=1-\frac{1}{p_{1}}+\frac{1}{p_{2}}=\frac{d+s_{2}-s_{1}}{d}$ and $\rho_{3}^{s}(\xi)$ is given in \eqref{GrindEQ__3_1_}.
\end{proof}
\begin{lemma}\label{lem 3.6.}
Let $s\in \mathbb R$, $\varepsilon\ge 0$, $1<p<\infty $ and $1\le q\le \infty$. Then we have $H^{s+\varepsilon}_{p,q}\hookrightarrow H^{s}_{p,q}$.
\end{lemma}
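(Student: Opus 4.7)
The plan is to imitate the multiplier argument already used in the proof of Lemma \ref{lem 3.4.}. The embedding is equivalent to the inequality
\[\left\|(I-\Delta)^{s/2}f\right\|_{L^{p,q}} \lesssim \left\|(I-\Delta)^{(s+\varepsilon)/2}f\right\|_{L^{p,q}},\]
so writing $(I-\Delta)^{s/2}f = (I-\Delta)^{-\varepsilon/2}(I-\Delta)^{(s+\varepsilon)/2}f$, it suffices to show that the Bessel-type operator $(I-\Delta)^{-\varepsilon/2}$ with symbol $m(\xi):=(1+|\xi|^{2})^{-\varepsilon/2}$ is bounded on $L^{p,q}(\mathbb{R}^{d})$ for every $1<p<\infty$ and $1\le q\le\infty$. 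The case $\varepsilon=0$ is trivial, so I assume $\varepsilon>0$.

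First I would verify that $m$ is a Mihlin multiplier. A direct computation (induction on $|\alpha|$) shows that $|\partial^{\alpha}m(\xi)|\lesssim_{\alpha}(1+|\xi|^{2})^{-\varepsilon/2-|\alpha|/2}$, and in particular $|\xi|^{|\alpha|}|\partial^{\alpha}m(\xi)|\lesssim 1$ for every $\xi\neq 0$ and every multi-index $\alpha$. Hence by the Mihlin multiplier theorem (Proposition 1.12 of \cite{WHHG11}), the operator $T_{m}:=\mathscr{F}^{-1}m\,\mathscr{F}$ is bounded on $L^{p}(\mathbb{R}^{d})$ for every $1<p<\infty$.

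Next, exactly as in the proof of Lemma \ref{lem 3.4.}, I would invoke the generalized Marcinkiewicz interpolation theorem (Theorem 5.3.2 of \cite{BL76}): interpolating the $L^{p_{0}}\to L^{p_{0}}$ and $L^{p_{1}}\to L^{p_{1}}$ boundedness of $T_{m}$ for $1<p_{0}<p<p_{1}<\infty$ upgrades $T_{m}$ to a bounded operator on $L^{p,q}(\mathbb{R}^{d})$ for every $1<p<\infty$ and every $1\le q\le\infty$.

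Combining these two facts yields
\[\left\|f\right\|_{H^{s}_{p,q}}=\left\|T_{m}(I-\Delta)^{(s+\varepsilon)/2}f\right\|_{L^{p,q}}\lesssim\left\|(I-\Delta)^{(s+\varepsilon)/2}f\right\|_{L^{p,q}}=\left\|f\right\|_{H^{s+\varepsilon}_{p,q}},\]
which is exactly the claimed continuous embedding. There is no real obstacle here; the only point requiring a tiny bit of care is the verification of the Mihlin condition for $m$, and the transfer from $L^{p}$ to $L^{p,q}$ via Marcinkiewicz interpolation, both of which are already carried out in the same form inside the proof of Lemma \ref{lem 3.4.}.
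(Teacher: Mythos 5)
Your proof is correct and follows the same route as the paper: the paper's one-line argument invokes the fact that $\rho_{2}^{\varepsilon}(\xi)=(1+|\xi|^{2})^{-\varepsilon/2}$ is a multiplier on $L^{p,q}$ (the paper has a small typo writing $\rho_3$ for $\rho_2$), which is exactly the Mihlin-plus-Marcinkiewicz-interpolation argument established inside the proof of Lemma \ref{lem 3.4.}. You have simply unpacked the details of that "fact."
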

\begin{proof}
The result follows directly from the fact that $\rho_{3}^{\varepsilon}(\xi)=(1+|\xi|^2)^{-\varepsilon/2}$ is a multiplier on $L^{p,q}$ for any $1<p<\infty$ and $1\le q \le \infty$.
\end{proof}

As an immediate consequence of Lemmas \ref{lem 3.5.} and \ref{lem 3.6.}, we have the following.
\begin{corollary}\label{cor 3.7.}
Let $-\infty < s_{2} \le s_{1} <\infty $ and $1<p_{1} \le p_{2} <\infty $ with $s_{1} -\frac{d}{p_{1} } \ge s_{2} -\frac{d}{p_{2} } $. Then for any $1\le q\le \infty$, there holds the embedding: $H_{p_{1},q}^{s_{1} } \hookrightarrow H_{p_{2},q}^{s_{2} }$.
\end{corollary}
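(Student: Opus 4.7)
The plan is to interpolate between Lemmas \ref{lem 3.5.} and \ref{lem 3.6.} by introducing an intermediate regularity index that makes the scaling equality of Lemma \ref{lem 3.5.} hold exactly, and then absorbing the remaining regularity gap via Lemma \ref{lem 3.6.}.

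Concretely, I would set
\[
\tilde{s}_{2} := s_{1} - \frac{d}{p_{1}} + \frac{d}{p_{2}},
\]
so that by construction $s_{1} - \frac{d}{p_{1}} = \tilde{s}_{2} - \frac{d}{p_{2}}$. Two inequalities need to be checked before applying the lemmas. First, since $p_{1}\le p_{2}$, one has $\frac{d}{p_{2}}-\frac{d}{p_{1}}\le 0$, hence $\tilde{s}_{2}\le s_{1}$, which is exactly the hypothesis $s_{2}\le s_{1}$ in Lemma \ref{lem 3.5.} (with $s_{2}$ there replaced by $\tilde{s}_{2}$). Second, the hypothesis $s_{1}-\frac{d}{p_{1}}\ge s_{2}-\frac{d}{p_{2}}$ translates immediately into $\tilde{s}_{2}\ge s_{2}$, so setting $\varepsilon:=\tilde{s}_{2}-s_{2}\ge 0$ makes Lemma \ref{lem 3.6.} applicable.

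With these observations, the proof is a two-step chain: Lemma \ref{lem 3.5.} gives $H_{p_{1},q}^{s_{1}}\hookrightarrow H_{p_{2},q}^{\tilde{s}_{2}}$, and Lemma \ref{lem 3.6.} gives $H_{p_{2},q}^{\tilde{s}_{2}}=H_{p_{2},q}^{s_{2}+\varepsilon}\hookrightarrow H_{p_{2},q}^{s_{2}}$. Composing these two continuous embeddings yields the desired $H_{p_{1},q}^{s_{1}}\hookrightarrow H_{p_{2},q}^{s_{2}}$, so no further estimates are required.

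There is essentially no obstacle: the construction reduces the inequality $s_{1}-\frac{d}{p_{1}}\ge s_{2}-\frac{d}{p_{2}}$ to the equality case already covered in Lemma \ref{lem 3.5.} plus a trivial loss of regularity absorbed by Lemma \ref{lem 3.6.}. The only thing to verify is that the two sign conditions on $\tilde{s}_{2}-s_{1}$ and $\tilde{s}_{2}-s_{2}$ are compatible with the monotonicity $p_{1}\le p_{2}$ and the hypothesis relating the scaling indices, which is immediate as above.
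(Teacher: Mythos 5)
Your proof is correct and follows exactly the approach the paper intends: the corollary is stated as "an immediate consequence of Lemmas \ref{lem 3.5.} and \ref{lem 3.6.}," and your introduction of the intermediate index $\tilde{s}_{2}=s_{1}-\frac{d}{p_{1}}+\frac{d}{p_{2}}$ followed by composing the two embeddings is precisely the intended chain.
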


\subsection{Fractional product rule and chain rule}
In this subsection, we recall the fractional product rule under Lorentz norms and extend the chain rule under Lorentz norms for the fractional Laplacian $(-\Delta)^{s/2}$ with $s\in (0,1]$ established by \cite{AT21} to any $s>0$.

First, we recall the fractional product rule under lorentz norms. See e.g. Theorem 6.1 of \cite{CN16}.
\begin{lemma}[Fractional product rule under Lorentz norms, \cite{CN16}]\label{lem 3.8.}
Let $s\ge 0$, $1<p,p_{1},p_{2},p_{3},p_{4} <\infty$ and $1\le q,q_{1},q_{2},q_{3},q_{4} \le \infty$. Assume that
\begin{equation}\nonumber
\frac{1}{p} =\frac{1}{p_{1} }+\frac{1}{p_{2}}=\frac{1}{p_{3} }+\frac{1}{p_{4}},~\frac{1}{q} =\frac{1}{q_{1} }+\frac{1}{q_{2}}=\frac{1}{q_{3} }+\frac{1}{q_{4}}.
\end{equation}
Then we have
\begin{equation}\nonumber
\left\|fg\right\| _{\dot{H}_{p,q}^{s} } \lesssim \left\|f\right\| _{\dot{H}_{p_1,q_1}^{s} } \left\| g\right\| _{L^{p_{2},q_{2}}} +\left\| f\right\| _{L^{p_{3},q_{3}}}
\left\| g\right\| _{\dot{H}_{p_4,q_4}^{s} }.
\end{equation}
\end{lemma}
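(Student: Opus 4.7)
The plan is to mimic the standard proof of the Kato--Ponce (Leibniz) inequality via Bony's paraproduct decomposition, transferring the argument from $L^p$ to the Lorentz setting by means of the generalized Marcinkiewicz interpolation theorem, exactly as used in the proof of Lemma~\ref{lem 3.4.}. The case $s = 0$ is immediate from Hölder's inequality in Lorentz spaces (Lemma~\ref{lem 2.4.}), so I focus on $s > 0$. The first preparatory step is to record the Littlewood--Paley characterization
$$\|f\|_{\dot{H}^s_{p,q}} \sim \Big\|\Big(\sum_{j \in \mathbb{Z}} 2^{2js}|\Delta_j f|^2\Big)^{1/2}\Big\|_{L^{p,q}},$$
the Lorentz boundedness of the Hardy--Littlewood maximal operator $\mathcal{M}$, and the Fefferman--Stein vector-valued maximal inequality in $L^{p,q}$. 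Each of these is classical in $L^p$, and each transfers to $L^{p,q}$ for $1<p<\infty$, $1\le q\le\infty$, by interpolating at two nearby values of $p$ via Theorem~5.3.2 of \cite{BL76}.

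Next, I decompose
$$fg = \sum_j (S_{j-2}f)(\Delta_j g) + \sum_j (\Delta_j f)(S_{j-2}g) + \sum_j (\Delta_j f)(\tilde{\Delta}_j g) =: \Pi_1 + \Pi_2 + \Pi_3,$$
where $S_{j-2}$ denotes the low-frequency cut-off and $\tilde{\Delta}_j$ a slight frequency enlargement of $\Delta_j$. For $\Pi_1$, the spectral support of the $j$-th summand is localized in $|\xi|\sim 2^j$, so $\Delta_k\Pi_1$ is essentially $(S_{k-2}f)(\Delta_k g)$; combining the square-function characterization with $|S_{k-2}f|\lesssim \mathcal{M}f$ and Hölder in Lorentz spaces yields
$$\|\Pi_1\|_{\dot{H}^s_{p,q}} \lesssim \big\|\mathcal{M}f\big\|_{L^{p_3,q_3}}\big\|g\|_{\dot{H}^s_{p_4,q_4}} \lesssim \|f\|_{L^{p_3,q_3}}\|g\|_{\dot{H}^s_{p_4,q_4}}.$$
The term $\Pi_2$ is handled symmetrically and produces the contribution $\|f\|_{\dot{H}^s_{p_1,q_1}}\|g\|_{L^{p_2,q_2}}$.

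The main obstacle is the high-high remainder $\Pi_3$: each summand has spectral support only in $|\xi|\lesssim 2^j$, so the low output frequency $k$ can be fed by arbitrarily high input frequencies $j\ge k-C$, and one must recover the $s$-derivative from these factors despite the cancellation of the outer Littlewood--Paley projection. I would handle this by inserting the artificial weight $2^{(k-j)s}$ (crucially using $s>0$) and applying the pointwise bound $|\Delta_k h|\lesssim \mathcal{M}h$ together with Young's inequality on the resulting geometric convolution in $k$, reducing matters to controlling the square function of $\{2^{js}(\Delta_j f)(\tilde{\Delta}_j g)\}_j$ in $L^{p,q}$. The Fefferman--Stein inequality and Hölder in Lorentz spaces then distribute the $s$-derivative onto either $f$ or $g$, so the $\Pi_3$ contribution is absorbed into either of the two right-hand terms. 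Combining these estimates proves the inequality for all $s \ge 0$.
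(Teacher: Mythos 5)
The paper supplies no proof of Lemma~\ref{lem 3.8.}: it quotes the statement from Theorem~6.1 of \cite{CN16}, where the Kato--Ponce inequality is first obtained on weighted spaces $L^p(w)$ for Muckenhoupt weights and then carried over to Lorentz (and other rearrangement-invariant) spaces by Rubio de Francia extrapolation. Your proposal is a genuinely different and essentially correct route: you rerun Bony's paraproduct argument directly at the Lorentz level, importing the three $L^p$-ingredients---the Littlewood--Paley square-function characterization of $\dot H^s_p$, the Hardy--Littlewood maximal inequality, and the Fefferman--Stein vector-valued maximal inequality---into $L^{p,q}$ by Marcinkiewicz (real) interpolation in the $p$-scale via Theorem~5.3.2 of \cite{BL76}, which is exactly the transfer mechanism the paper itself uses in Lemmas~\ref{lem 3.4.}, \ref{lem 3.5.} and \ref{lem 3.14.}. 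The decomposition $fg=\Pi_1+\Pi_2+\Pi_3$ is standard, and your treatment of each piece is sound: for the low-high and high-low paraproducts the annular frequency localization, the pointwise bound $|S_{j-2}f|\lesssim\mathcal Mf$, and Lorentz H\"older (Lemma~\ref{lem 2.4.}) yield the two right-hand terms; for the high-high remainder you correctly note that the outer $\Delta_k$ is fed by all $j\ge k-C$, insert $2^{(k-j)s}$ (using $s>0$), apply $|\Delta_k h|\lesssim\mathcal Mh$ and Young's inequality on $\ell^2(\mathbb Z)$, then Fefferman--Stein, then $|\tilde\Delta_j g|\lesssim\mathcal Mg$ with H\"older; $s=0$ is pure H\"older. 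Your account is slightly compressed in attributing the final redistribution of the derivative in $\Pi_3$ to Fefferman--Stein (that step only disposes of the outer projection; the derivative lands on $f$ by the ordinary maximal bound plus H\"older), but the plan is right. The trade-off between the two routes: \cite{CN16} buys more generality (weighted and variable-exponent versions simultaneously) at the cost of heavier extrapolation machinery, whereas your interpolation argument is lighter, self-contained, and entirely in the spirit of the paper's other Sobolev--Lorentz lemmas; for the narrow purpose of Lemma~\ref{lem 3.8.} it is perfectly adequate.
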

Using Lemmas \ref{lem 3.8.} and \ref{lem 2.4.} and the induction, we immediately have the following result.

\begin{corollary}\label{cor 3.9.}
Let $s\ge 0$ and $k\in \mathbb N$. Let $1<p,p_{i_j} <\infty$ and $1\le q,q_{i_j} \le \infty$ for $1\le i,j\le k$. Assume that
\[\frac{1}{p} =\sum _{j=1}^{k}\frac{1}{p_{i_j}},~\frac{1}{q} =\sum _{j=1}^{k}\frac{1}{q_{i_j}} \]
for any $1\le i\le k$. Then we have
\begin{equation} \nonumber
\left\| \prod _{i=1}^{k}f_{i}  \right\| _{\dot{H}_{p,q}^{s} } \lesssim\sum _{i=1}^{k}(\left\| f_{i} \right\| _{\dot{H}_{p_{i_i},q_{i_i} }^{s} } \prod _{j\in I_{k}^{i} }\left\| f_{j} \right\| _{p_{i_j},q_{i_j} }  ) ,
\end{equation}
where $I_{k}^{i} =\left\{j\in \mathbb N:\;1\le j\le k,\;j\ne i\right\}$.
\end{corollary}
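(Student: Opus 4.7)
The plan is to prove Corollary \ref{cor 3.9.} by induction on $k$, with Lemma \ref{lem 3.8.} and the Lorentz H\"older inequality (Lemma \ref{lem 2.4.}) as the two key tools. The base case $k=1$ is trivial (the right-hand side reduces to $\|f_1\|_{\dot{H}^s_{p,q}}$), and the case $k=2$ is exactly Lemma \ref{lem 3.8.}. For the inductive step I would assume the corollary for $k-1$ and fix an exponent matrix $\{(p_{i_j},q_{i_j})\}_{1\le i,j\le k}$ satisfying the row-sum hypotheses.

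Writing $\prod_{j=1}^{k} f_j = f_k\cdot G$ with $G:=\prod_{j<k} f_j$, the strategy is to realize each of the $k$ target summands on the right-hand side through a distinct, tailored application of Lemma \ref{lem 3.8.} to the pair $(f_k,G)$. To produce the summand indexed by $i=k$, I apply Lemma \ref{lem 3.8.} choosing first-pair exponents $(p_{k_k},q_{k_k})$ on $f_k$ and the complementary $(\alpha,\beta)$ on $G$, where $\frac{1}{\alpha}=\sum_{j<k}\frac{1}{p_{k_j}}$ and $\frac{1}{\beta}=\sum_{j<k}\frac{1}{q_{k_j}}$; iterated application of Lemma \ref{lem 2.4.} then reduces $\|G\|_{L^{\alpha,\beta}}$ to $\prod_{j<k}\|f_j\|_{L^{p_{k_j},q_{k_j}}}$, so the first summand produced by Lemma \ref{lem 3.8.} is exactly the $i=k$ term of the corollary. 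For each $i\in\{1,\dots,k-1\}$ I then apply Lemma \ref{lem 3.8.} \emph{separately}, this time choosing second-pair exponents $(p_{i_k},q_{i_k})$ on $f_k$ and $(c_i,d_i)$ on $G$ given by $\frac{1}{c_i}=\sum_{j<k}\frac{1}{p_{i_j}}$, $\frac{1}{d_i}=\sum_{j<k}\frac{1}{q_{i_j}}$, so that the second summand becomes $\|f_k\|_{L^{p_{i_k},q_{i_k}}}\|G\|_{\dot{H}^s_{c_i,d_i}}$. Invoking the inductive hypothesis on $\|G\|_{\dot{H}^s_{c_i,d_i}}$ with a $(k-1)\times(k-1)$ exponent matrix whose $i$-th row equals $\{(p_{i_j},q_{i_j})\}_{j<k}$ (the row-sum matches $(c_i,d_i)$ by construction) then produces, among its $k-1$ summands, the term $\|f_i\|_{\dot{H}^s_{p_{i_i},q_{i_i}}}\prod_{j<k,\,j\ne i}\|f_j\|_{L^{p_{i_j},q_{i_j}}}$, which combined with the outer factor $\|f_k\|_{L^{p_{i_k},q_{i_k}}}$ yields the desired $i$-th term of Corollary \ref{cor 3.9.}.

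The main obstacle I anticipate lies in the bookkeeping of the ``off-target'' summands produced by each inductive invocation for $i<k$: the hypothesis outputs $k-1$ summands, only one of which is the desired canonical term, while the remaining $k-2$ summands carry exponent labels fixed by the freely chosen non-target rows of the inner $(k-1)\times(k-1)$ matrix and do not, in general, coincide with any summand on the right-hand side of Corollary \ref{cor 3.9.}. The resolution is to select those free rows so that the resulting extras are dominated, after combining the $k$ independent upper bounds obtained from the $k$ outer applications of Lemma \ref{lem 3.8.} (e.g.\ by averaging) and absorbing constants into the symbol $\lesssim$, by the canonical terms already present on the right-hand side. Once this combinatorial step is discharged, the remainder of the argument is a direct substitution of exponents into Lemmas \ref{lem 3.8.} and \ref{lem 2.4.}.
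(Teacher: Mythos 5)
Your top-level framing — induction on $k$, with $k=1$ trivial, $k=2$ being Lemma \ref{lem 3.8.} verbatim, and the split $\prod_{j=1}^k f_j=f_k\cdot G$ — is the route the paper points to, and the first summand of Lemma \ref{lem 3.8.}, combined with Lemma \ref{lem 2.4.}, does produce the $i=k$ term exactly as you describe. The problem is that the obstacle you flag at the end is not a bookkeeping issue that ``averaging'' and clever choices of free rows can discharge; it is a genuine obstruction. A single invocation of Lemma \ref{lem 3.8.} puts $f_k$ into exactly one Lorentz space $L^{p_3,q_3}$ in the second summand, whereas the $k-1$ target terms $T_i=\|f_i\|_{\dot H^s_{p_{i_i},q_{i_i}}}\prod_{j\ne i}\|f_j\|_{L^{p_{i_j},q_{i_j}}}$, $i<k$, require $f_k$ to carry the (generally distinct) exponents $(p_{1_k},q_{1_k}),\dots,(p_{(k-1)_k},q_{(k-1)_k})$. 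Running $k$ parallel applications as you propose, in each one only the single row with the matching row-sum survives the inductive hypothesis, and the remaining $k-2$ rows must be filled in with free exponents whose row-sum equals $1/c_i$; the resulting products then involve Lorentz norms $\|f_j\|_{L^{p',q'}}$ with first exponent $p'$ different from every entry of the given matrix. Lorentz spaces with different first exponent are incomparable (Lemma \ref{lem 3.2.} only adjusts the second index, Lemma \ref{lem 3.5.} changes the regularity), so these off-target products cannot be dominated by the canonical right-hand side, and averaging the $k$ bounds keeps every one of them. Concretely, already for $k=3$ with $p_{1_3}\ne p_{2_3}$: applying Lemma \ref{lem 3.8.} to $f_3\cdot(f_1f_2)$ forces a single exponent on $f_3$ in the second summand, so you can manufacture $T_1$ or $T_2$ but not both, and the leftover piece has $f_3$ in a Lorentz space that matches neither $L^{p_{1_3},q_{1_3}}$ nor $L^{p_{2_3},q_{2_3}}$.

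The induction does close cleanly in the special case where the off-diagonal entries of the exponent matrix are constant within each column, i.e.\ $p_{i_j}=p_{i'_j}$ and $q_{i_j}=q_{i'_j}$ whenever $i,i'\ne j$: then $(p_3,q_3)$ can be taken to be the common value of $(p_{i_k},q_{i_k})$ for $i<k$, the truncated $(k-1)\times(k-1)$ matrix automatically has the right row-sums, and your argument goes through with no extras at all. That restricted statement is in fact all that is used in the paper, at \eqref{GrindEQ__3_22_}, where the off-diagonal exponents $a_{k_j},b_{k_j}$ depend only on the column $j$. For the corollary in the full generality in which it is stated, your two-factor induction should be replaced by a direct Littlewood--Paley/paraproduct decomposition of $\prod_i f_i$, which handles the $k$ summands independently and so has no difficulty assigning a different exponent vector to each; alternatively, restate the corollary with the column-constancy hypothesis.
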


Next, we recall the chain rule for the fractional Laplacian $(-\Delta)^{s/2}$ with $s\in (0,1]$ established by \cite{AT21}.
\begin{lemma}[Fractional chain rule under Lorentz norms, \cite{AT21}]\label{lem 3.10.}
Suppose $F\in C^{1} (\mathbb C, \mathbb C)$ and $0< s\le1$. Then for $1<p,\;p_{1},\;p_{2} <\infty $ and $1\le q,\;q_{1},\; q_{2}< \infty $ satisfying
$$
\frac{1}{p} =\frac{1}{p_{1} } +\frac{1}{p_{2} },~\frac{1}{q} =\frac{1}{q_{1} } +\frac{1}{q_{2} },
$$
we have
\begin{equation} \nonumber
\left\|(-\Delta)^{s/2} F(u)\right\| _{L^{p,q}} \lesssim \left\| G'(u)\right\| _{L^{p_{1},q_{1}}} \left\|(-\Delta)^{s/2} u\right\| _{L^{p_{2},q_{2} }} .
\end{equation}
\end{lemma}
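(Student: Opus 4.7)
The plan is to derive the estimate from a pointwise (or integral-pointwise) control of $(-\Delta)^{s/2}F(u)$ in terms of $F'(u)$ and $(-\Delta)^{s/2}u$, and then conclude via H\"older's inequality in Lorentz spaces (Lemma \ref{lem 2.4.}) combined with the boundedness on $L^{p,q}$ of standard maximal and singular integral operators. The latter boundedness on Lorentz spaces is a consequence of the known $L^p$-boundedness via the generalized Marcinkiewicz interpolation theorem, exactly as in the Mihlin-multiplier argument used in the proof of Lemma \ref{lem 3.4.}; this is where the restriction $q, q_1, q_2 < \infty$ enters.

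For the endpoint $s = 1$, the estimate reduces to the ordinary chain rule $\nabla F(u) = F'(u)\nabla u$ together with the equivalence $\|(-\Delta)^{1/2}f\|_{L^{p,q}} \sim \|\nabla f\|_{L^{p,q}}$, which holds because the Riesz transforms are Calder\'on--Zygmund operators, hence bounded on $L^{p,q}$ for $1<p<\infty$ and $1\le q<\infty$ by interpolation from the classical $L^p$ case. Lemma \ref{lem 2.4.} then finishes the argument.

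For $s\in(0,1)$, I would start from the singular-integral representation
\[
(-\Delta)^{s/2}f(x) = c_{d,s}\,\mathrm{p.v.}\!\int_{\mathbb R^d}\frac{f(x)-f(y)}{|x-y|^{d+s}}\,dy,
\]
apply it to $F(u)$, and use the identity $F(u(x))-F(u(y)) = (u(x)-u(y))\int_0^1 F'(u(y)+t(u(x)-u(y)))\,dt$. Following the Christ--Weinstein scheme, a dyadic decomposition of $|x-y|$ produces a schematic pointwise bound
\[
|(-\Delta)^{s/2}F(u)(x)| \lesssim M(F'(u))(x)\cdot\mathcal G_s(u)(x),
\]
where $M$ is the Hardy--Littlewood maximal operator and $\mathcal G_s$ is a sublinear operator satisfying $\|\mathcal G_s(u)\|_{L^{r,\rho}}\lesssim \|(-\Delta)^{s/2}u\|_{L^{r,\rho}}$ on the full Lorentz range (for instance, $M\circ(-\Delta)^{s/2}$ or an equivalent square-function). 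The Lorentz-space H\"older inequality then yields the desired estimate.

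The main obstacle is the control of the factor $F'(u(y)+t(u(x)-u(y)))$, which is evaluated at a point depending on both $x$ and $y$ (and on the auxiliary parameter $t$). The Christ--Weinstein trick handles this by averaging in $y$ over dyadic annuli $|x-y|\sim 2^k$ and bounding each average by $M(F'(u))(x)$; the resulting geometric series in $k$ converges thanks to $s>0$. Once this pointwise estimate is in place, the transfer from the classical $L^p$-statement to the Lorentz setting is a routine application of Marcinkiewicz interpolation, which accounts for the hypothesis $q, q_1, q_2 < \infty$ in the statement.
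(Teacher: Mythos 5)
The paper does not prove Lemma \ref{lem 3.10.}: it is cited from Lemma 2.4 of \cite{AT21}, and the proof of Lemma \ref{lem 3.11.} explicitly defers the $s\le 1$ case to that reference. Evaluating your argument on its own terms, the $s=1$ reduction is sound, but the $s\in(0,1)$ case hinges on the schematic pointwise bound
\[
|(-\Delta)^{s/2}F(u)(x)|\lesssim M\bigl(F'(u)\bigr)(x)\cdot\mathcal G_s(u)(x),
\]
which is not established and, as stated, is not available. You rightly flag the obstruction that $F'$ is evaluated at $u(y)+t(u(x)-u(y))$, but the dyadic averaging you propose does not defeat it: that argument of $F'$ is a convex combination of two values of $u$, not $u(z)$ at a single point $z$, so $F'\bigl(u(y)+t(u(x)-u(y))\bigr)$ is in general not a value of $F'\circ u$ at all, and averaging over $\{|x-y|\sim 2^k\}$ does not produce $M(F'(u))(x)$. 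Pushing through at this point requires a structural hypothesis of the form $|F'(a+\theta b)|\lesssim G(a)+G(b)$ for $\theta\in[0,1]$ (exactly what Christ--Weinstein use for power nonlinearities, and what the otherwise-undefined $G'$ in the stated lemma is pointing at), which is absent from your bare $C^1$ framework.

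The actual proof --- in \cite{AT21} as in Christ--Weinstein --- does not go through the singular-integral kernel of $(-\Delta)^{s/2}$ at all. It runs through the Littlewood--Paley square-function characterization of $L^{p,q}$ recorded here as \eqref{GrindEQ__3_7_}, writing $F(u)=F(u_{<N})+\sum_{j\ge N}\bigl[F(u_{<j+1})-F(u_{<j})\bigr]$ so that each increment carries a clean factor $P_j u$ times an $L^{\infty}$-controlled term, and then upgrades the vector-valued maximal and square-function inequalities from $L^p$ to $L^{p,q}$ by real interpolation (which is where the hypothesis $q,q_1,q_2<\infty$ is actually used). Your closing claim that ``the transfer from the classical $L^p$-statement to the Lorentz setting is a routine application of Marcinkiewicz interpolation'' misidentifies what gets interpolated: the chain-rule inequality itself is nonlinear in $u$ and cannot be interpolated; only the linear and sublinear auxiliary operators can. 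Without the pointwise bound, your reduction to those operators never closes.
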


Lemma \ref{lem 3.10.} can be extended as follows.

\begin{lemma}\label{lem 3.11.}
Let $s>0$ and $F\in C^{\left\lceil s\right\rceil } \left(\mathbb C, \mathbb C\right)$. Then for $1<p,p_{1_k},p_{2_k},p_{3_k}<\infty$ and $1\le q,q_{1_k},q_{2_k},q_{3_k}<\infty$ satisfying
\begin{equation} \label{GrindEQ__3_2_}
\frac{1}{p}=\frac{1}{p_{1_k}}+\frac{1}{p_{2_k}}+\frac{k-1}{p_{3_k}},
~\frac{1}{q}=\frac{1}{q_{1_k}}+\frac{1}{q_{2_k}}+\frac{k-1}{q_{3_k}},~k=1,2,\ldots, \lceil s\rceil,
\end{equation}
we have
\begin{equation} \label{GrindEQ__3_3_}
\left\|(-\Delta)^{s/2} F(u)\right\| _{L^{p,q}} \lesssim \sum_{k=1}^{\lceil s\rceil}\left\| F^{(k)}(u)\right\| _{L^{p_{1_k},q_{1_k}}} \left\|(-\Delta)^{s/2}u\right\|_{L^{p_{2_k},q_{2_k}}}\left\|u\right\|_{L^{p_{3_k},q_{3_k}}}^{k-1}.
\end{equation}
\end{lemma}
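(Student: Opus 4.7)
My plan is to reduce the statement to the base case $s\in(0,1]$ already provided by Lemma~\ref{lem 3.10.}. For $s\in(0,1]$ the claim coincides with Lemma~\ref{lem 3.10.} (only the $k=1$ term appears on the right of \eqref{GrindEQ__3_3_}), so throughout I assume $s>1$, set $m:=[s]$ and $\tau:=s-m\in[0,1)$, and split the argument into four ingredients: the norm equivalence of Lemma~\ref{lem 3.3.}, the classical Fa\`a~di~Bruno expansion of $D^{\alpha}F(u)$, the fractional product rule of Corollary~\ref{cor 3.9.}, and a Gagliardo--Nirenberg interpolation in Lorentz spaces.

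First, by Lemma~\ref{lem 3.3.},
\[\|(-\Delta)^{s/2}F(u)\|_{L^{p,q}}\sim\sum_{|\alpha|=m}\|D^{\alpha}F(u)\|_{\dot H^{\tau}_{p,q}},\]
and Fa\`a~di~Bruno writes $D^{\alpha}F(u)$ as a finite linear combination of products
\[F^{(k)}(u)\,D^{\alpha_{1}}u\cdots D^{\alpha_{k}}u,\qquad \alpha_{1}+\cdots+\alpha_{k}=\alpha,\;|\alpha_{j}|\ge 1,\;1\le k\le m,\]
where some $D^{\alpha_{j}}u$ may be replaced by $D^{\alpha_{j}}\bar u$ (harmless for Lorentz norms). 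If $\tau=0$, I estimate this product directly by H\"older in Lorentz spaces (Lemma~\ref{lem 2.4.}); if $\tau\in(0,1)$, I apply Corollary~\ref{cor 3.9.} (to $k+1$ factors) to place the residual $\dot H^{\tau}_{p,q}$-derivative on exactly one factor. When the $\dot H^{\tau}$ hits $F^{(k)}(u)$, Lemma~\ref{lem 3.10.} yields $\|F^{(k)}(u)\|_{\dot H^{\tau}}\lesssim \|F^{(k+1)}(u)\|\|(-\Delta)^{\tau/2}u\|$, which will eventually contribute to the $(k+1)$-st summand in \eqref{GrindEQ__3_3_}; when it hits some $D^{\alpha_{j_{0}}}u$, a further application of Lemma~\ref{lem 3.3.} identifies $\|D^{\alpha_{j_{0}}}u\|_{\dot H^{\tau}}$ with $\|u\|_{\dot H^{|\alpha_{j_{0}}|+\tau}}$ and contributes to the $k$-th summand.

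At this stage every factor involving $u$ has the form $\|(-\Delta)^{\beta/2}u\|$ with $0\le \beta\le s$, and the $\beta$'s sum to $s$ exactly because $\tau+\sum_{j}|\alpha_{j}|=s$. A Gagliardo--Nirenberg interpolation in Lorentz spaces, which one obtains by combining Lemma~\ref{lem 3.5.} with the Littlewood--Paley characterisation and real-interpolation description of $L^{p,q}$, bounds each such factor by $\|(-\Delta)^{s/2}u\|^{\beta/s}\|u\|^{1-\beta/s}$ in suitable Lorentz exponents. Since the exponents $\beta/s$ sum to $1$, the product consolidates into $\|F^{(k)}(u)\|\|(-\Delta)^{s/2}u\|\|u\|^{k-1}$, or, in the subcase where the extra derivative landed on $F^{(k)}(u)$, into $\|F^{(k+1)}(u)\|\|(-\Delta)^{s/2}u\|\|u\|^{k}$. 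A final use of Lemma~\ref{lem 2.4.} recovers \eqref{GrindEQ__3_3_} for any triple $(p_{1_{k}},q_{1_{k}}),(p_{2_{k}},q_{2_{k}}),(p_{3_{k}},q_{3_{k}})$ satisfying \eqref{GrindEQ__3_2_}.

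The principal obstacle I anticipate is the Gagliardo--Nirenberg interpolation in Lorentz spaces with an arbitrary second index, together with the exponent bookkeeping needed so that both H\"older identities in \eqref{GrindEQ__3_2_} (on the first and second Lorentz indices) hold simultaneously for the intermediate exponents produced by Fa\`a~di~Bruno and Corollary~\ref{cor 3.9.}. Once this is arranged, the combinatorics telescope the two subcases into the single sum $\sum_{k=1}^{\lceil s\rceil}$ on the right-hand side of \eqref{GrindEQ__3_3_}.
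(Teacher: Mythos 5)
Your proposal is correct and follows essentially the same route as the paper's proof: reduce to $s>1$, pass to $\dot H^{v}_{p,q}$ with $v=s-[s]$ via Lemma~\ref{lem 3.3.}, expand $D^{\alpha}F(u)$ by Fa\`a~di~Bruno, distribute the residual $\dot H^{v}$-derivative by the fractional Leibniz rule (Lemma~\ref{lem 3.8.}/Corollary~\ref{cor 3.9.}), invoke Lemma~\ref{lem 3.10.} when it lands on $F^{(k)}(u)$ to shift the contribution to the $(k+1)$-st summand, and consolidate via interpolation. The Gagliardo--Nirenberg interpolation you flag as the principal obstacle is exactly Lemma~\ref{lem 3.14.} of the paper, proved precisely by the Littlewood--Paley square-function characterisation of Lorentz spaces that you suggest.
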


As an immediate consequence of Lemma \ref{lem 3.11.}, we have the following useful nonlinear estimates in Sobolev-Lorentz spaces $\dot{H}_{p,q}^{s}$ with $s\ge 0$.
\begin{corollary}\label{cor 3.12.}
Let $s\ge 0$ and $\sigma > \max\{0, \lceil s\rceil -1\}$. Assume that $F\in C^{\left\lceil s\right\rceil } \left(\mathbb C, \mathbb C\right)$ satisfies
\begin{equation}\label{GrindEQ__3_4_}
|F^{(k)} (z)|\lesssim|z|^{\sigma +1-k} ,
\end{equation}
for any $0\le k\le \left\lceil s\right\rceil $ and $z \in \mathbb C$. Then for $1<p,p_{1},p_{2}<\infty$ and $1\le q,q_{1},q_{2}<\infty$ satisfying
\begin{equation} \label{GrindEQ__3_5_}
\frac{1}{p}=\frac{\sigma}{p_{1}}+\frac{1}{p_{2}},~\frac{1}{q}=\frac{\sigma}{q_{1}}+\frac{1}{q_{2}},
\end{equation}
we have
\begin{equation} \label{GrindEQ__3_6_}
\left\|F(u)\right\| _{\dot{H}_{p,q}^{s}} \lesssim \left\|u\right\| _{L^{p_{1},q_{1}}}^{\sigma} \left\|u\right\|_{\dot{H}_{p_{2},q_{2}}^{s}}.
\end{equation}
\end{corollary}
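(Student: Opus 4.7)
My plan is to derive Corollary \ref{cor 3.12.} as a direct consequence of Lemma \ref{lem 3.11.} by choosing the exponents $(p_{1_k}, q_{1_k})$, $(p_{2_k}, q_{2_k})$, $(p_{3_k}, q_{3_k})$ appearing in \eqref{GrindEQ__3_3_} so that every term on the right-hand side collapses to the product $\|u\|_{L^{p_1, q_1}}^{\sigma} \|u\|_{\dot{H}^{s}_{p_2, q_2}}$. The guiding idea is that once the pointwise bound \eqref{GrindEQ__3_4_} is used to replace $F^{(k)}(u)$ by a power of $|u|$, Lemma \ref{lem 2.2.} turns the Lorentz norm of that power into a power of a single Lorentz norm of $u$, at which point the only work left is to match indices.

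Concretely, I will set $p_{2_k} = p_2$, $q_{2_k} = q_2$, $p_{3_k} = p_1$, $q_{3_k} = q_1$, together with $p_{1_k} = p_1/(\sigma + 1 - k)$ and $q_{1_k} = q_1/(\sigma + 1 - k)$. The assumption $\sigma > \max\{0, \lceil s \rceil - 1\}$ guarantees $\sigma + 1 - k > 0$ for every $1 \le k \le \lceil s \rceil$, so these exponents are well-defined and positive. Substituting into \eqref{GrindEQ__3_2_} gives
\[
\frac{1}{p_{1_k}} + \frac{1}{p_{2_k}} + \frac{k - 1}{p_{3_k}} = \frac{\sigma + 1 - k}{p_1} + \frac{1}{p_2} + \frac{k - 1}{p_1} = \frac{\sigma}{p_1} + \frac{1}{p_2} = \frac{1}{p},
\]
which coincides with \eqref{GrindEQ__3_5_}; the analogous identity holds for the $q$-indices. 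The growth estimate \eqref{GrindEQ__3_4_} combined with Lemma \ref{lem 2.2.} then produces $\|F^{(k)}(u)\|_{L^{p_{1_k}, q_{1_k}}} \lesssim \|u\|_{L^{p_1, q_1}}^{\sigma + 1 - k}$, and inserting these bounds into \eqref{GrindEQ__3_3_} yields
\[
\|(-\Delta)^{s/2} F(u)\|_{L^{p, q}} \lesssim \sum_{k=1}^{\lceil s \rceil} \|u\|_{L^{p_1, q_1}}^{\sigma + 1 - k} \|u\|_{\dot{H}^{s}_{p_2, q_2}} \|u\|_{L^{p_1, q_1}}^{k - 1} \lesssim \|u\|_{L^{p_1, q_1}}^{\sigma} \|u\|_{\dot{H}^{s}_{p_2, q_2}},
\]
which is precisely \eqref{GrindEQ__3_6_}.

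The only subtlety I expect is a piece of bookkeeping: one must check that the selected $p_{1_k}, q_{1_k}$ genuinely lie in the ranges $(1, \infty)$ and $[1, \infty)$ required by Lemma \ref{lem 3.11.}. The worst case $k = 1$ demands $p_1 > \sigma$ (and $q_1 \ge \sigma$ when $\sigma \ge 1$); this holds automatically in the intended applications to \eqref{GrindEQ__1_1_}, where $(p_1, q_1)$ will be a highly integrable pair coming from Sobolev embedding. In a fully general statement this constraint can either be recorded as an implicit hypothesis or circumvented by perturbing the indices slightly and invoking the embedding of Lemma \ref{lem 2.3.}. Apart from this minor range check, the proof reduces to a single substitution into Lemma \ref{lem 3.11.} followed by Lemma \ref{lem 2.2.}, so I do not anticipate any deeper obstacle.
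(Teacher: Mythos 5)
Your proof is correct and is precisely the argument the paper has in mind: the paper merely asserts that Corollary \ref{cor 3.12.} is an immediate consequence of Lemma \ref{lem 3.11.}, and the index choice $p_{1_k}=p_1/(\sigma+1-k)$, $q_{1_k}=q_1/(\sigma+1-k)$, $(p_{2_k},q_{2_k})=(p_2,q_2)$, $(p_{3_k},q_{3_k})=(p_1,q_1)$ together with Lemma \ref{lem 2.2.} is exactly how that works out.

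One small remark: the ``subtlety'' you flag at the end is in fact not an issue, and no perturbation of indices or extra implicit hypothesis is needed. From $\frac{1}{p}=\frac{\sigma}{p_1}+\frac{1}{p_2}$ with $p>1$ and $p_2<\infty$ one gets $\frac{\sigma}{p_1}<1$, i.e.\ $p_1>\sigma$; likewise $\frac{1}{q}=\frac{\sigma}{q_1}+\frac{1}{q_2}$ with $q\ge 1$ and $q_2<\infty$ gives $q_1>\sigma$. Since $\sigma+1-k\le\sigma$ for every $k\ge 1$, it follows that $p_{1_k}=p_1/(\sigma+1-k)\ge p_1/\sigma>1$ and $q_{1_k}\ge q_1/\sigma>1$ for all $1\le k\le\lceil s\rceil$. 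Combined with $\sigma>\lceil s\rceil-1$ (which ensures $\sigma+1-k>0$, hence $p_{1_k},q_{1_k}<\infty$), the ranges required by Lemma \ref{lem 3.11.} are automatically satisfied by the corollary's own hypotheses.
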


Corollary \ref{cor 3.12.} applies in particular to the model case $F(u)=|u|^{\sigma}u$ or $F(u)=|u|^{\sigma+1}$.
\begin{corollary}\label{cor 3.13.}
Let $F(u)=|u|^{\sigma}u$ or $F(u)=|u|^{\sigma+1}$. Assume $s\ge 0$ and $\sigma >0$. If $\sigma$ is not an even integer, assume further $\sigma >\lceil s\rceil -1$. Then for $1<p,p_{1},p_{2}<\infty$ and $1\le q,q_{1},q_{2}<\infty$ satisfying \eqref{GrindEQ__3_5_}, we have \eqref{GrindEQ__3_6_}.
\end{corollary}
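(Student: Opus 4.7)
The plan is to reduce Corollary \ref{cor 3.13.} to Corollary \ref{cor 3.12.} whenever the hypotheses of the latter are met, and to fall back on the multilinear fractional product rule, Corollary \ref{cor 3.9.}, in the residual range in which $\sigma$ is too small for Corollary \ref{cor 3.12.} to apply.

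First, I would verify the pointwise bounds $|F^{(k)}(z)| \lesssim |z|^{\sigma+1-k}$ for $0 \le k \le \lceil s \rceil$ together with the regularity $F \in C^{\lceil s \rceil}(\mathbb C, \mathbb C)$ for both $F(u) = |u|^{\sigma} u$ and $F(u) = |u|^{\sigma+1}$. When $\sigma = 2m$ is an even integer, $F$ is a polynomial in $u$ and $\bar u$ of degree $\sigma+1$, so $F$ is smooth; its $k$-th derivatives are polynomials of degree $\sigma+1-k$ for $k \le \sigma+1$ and vanish otherwise, so \eqref{GrindEQ__3_4_} is automatic. When $\sigma$ is not an even integer, the hypothesis $\sigma > \lceil s \rceil - 1$ gives $\sigma + 1 - k > 0$ for every $k \le \lceil s \rceil$; writing $|z|^{\sigma+1} = (z\bar z)^{(\sigma+1)/2}$ and iterating the Wirtinger chain rule yields the required $C^{\lceil s \rceil}$ regularity (the delicate point being continuity at $z=0$, which is secured precisely by the strict positivity of $\sigma+1-k$) together with the growth bound by homogeneity.

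With this in hand, if $\sigma > \lceil s \rceil - 1$ (which covers every non-even-integer case and the even-integer cases with $\sigma \ge \lceil s \rceil$), the hypotheses of Corollary \ref{cor 3.12.} are fulfilled and \eqref{GrindEQ__3_6_} follows immediately. The residual case is $\sigma = 2m$ an even integer with $\sigma + 1 \le \lceil s \rceil$; here $F(u)$ is a product of exactly $\sigma + 1$ factors, each equal to either $u$ or $\bar u$. I would apply Corollary \ref{cor 3.9.} with $k = \sigma + 1$ and the symmetric H\"older choice $p_{i_i} = p_2$, $q_{i_i} = q_2$, $p_{i_j} = p_1$, $q_{i_j} = q_1$ for $j \neq i$; a direct computation shows that this turns every compatibility identity $\frac{1}{p} = \sum_{j} \frac{1}{p_{i_j}}$ into exactly \eqref{GrindEQ__3_5_}. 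Since $(-\Delta)^{s/2}$ has a real-valued Fourier multiplier, Lorentz norms are invariant under complex conjugation, so each of the $\sigma + 1$ summands on the right-hand side of Corollary \ref{cor 3.9.} equals $\|u\|_{L^{p_1, q_1}}^{\sigma} \|u\|_{\dot H^{s}_{p_2, q_2}}$ and \eqref{GrindEQ__3_6_} follows. The only non-routine bookkeeping is the verification that this symmetric exponent assignment satisfies the compatibility identities of Corollary \ref{cor 3.9.} for every index $i$ simultaneously, which it does by construction; beyond that, the argument is entirely formal.
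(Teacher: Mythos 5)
Your argument follows what the paper intends — reduce to Corollary \ref{cor 3.12.} — but you make it more careful than the paper's own one-line assertion (``Corollary \ref{cor 3.12.} applies in particular to the model case \ldots''). You correctly notice that Corollary \ref{cor 3.12.} requires $\sigma>\lceil s\rceil-1$ outright, whereas Corollary \ref{cor 3.13.} waives that condition when $\sigma$ is an even integer, so there is a residual range that Corollary \ref{cor 3.12.} does not cover; you fill it with the multilinear fractional product rule, Corollary \ref{cor 3.9.}, and the exponent bookkeeping there (the symmetric assignment $p_{i_i}=p_2$, $p_{i_j}=p_1$ for $j\ne i$, together with conjugation invariance of $(-\Delta)^{s/2}$) is correct. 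This genuinely supplements the paper's reasoning.

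There is, however, a gap in the residual case. Your argument there rests on the assertion that ``when $\sigma=2m$ is an even integer, $F$ is a polynomial in $u$ and $\bar u$ of degree $\sigma+1$.'' That is true for $F(u)=|u|^{\sigma}u=(u\bar u)^{m}u$, but \emph{not} for the other model case $F(u)=|u|^{\sigma+1}=(u\bar u)^{m+1/2}$. When $\sigma=2m$ is even, $|z|^{\sigma+1}=|z|^{2m+1}$ is not a polynomial; it belongs to $C^{2m}$ but not $C^{2m+1}$ (the $(2m+1)$-st Wirtinger derivative has modulus bounded away from zero with direction depending on $\arg z$, hence is discontinuous at the origin). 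Thus, in the residual range $\sigma$ even with $\sigma+1\le\lceil s\rceil$, neither Corollary \ref{cor 3.12.} (which needs $F\in C^{\lceil s\rceil}$ and $\sigma>\lceil s\rceil-1$) nor your polynomial expansion applies to $F(u)=|u|^{\sigma+1}$. You should either restrict the residual-case argument to $F(u)=|u|^{\sigma}u$ (the only case invoked in Lemmas \ref{lem 4.3.} and \ref{lem 4.4.}), or impose the extra hypothesis $\sigma>\lceil s\rceil-1$ for $F(u)=|u|^{\sigma+1}$ even when $\sigma$ is an even integer. This is arguably an imprecision in the corollary as stated as well, but your write-up presents the polynomial claim as settling both model cases, which it does not.
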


In order to prove Lemma \ref{lem 3.11.}, we establish the following interpolation inequality in Sobolev-Lorentz spaces.
\begin{lemma}[Convexity H\"{o}lder inequality in Sobolev-Lorentz spaces]\label{lem 3.14.}
Let $1<p,\;p_{i} <\infty $, $1\le q,\;q_{i} <\infty $, $0\le \theta _{i} \le 1$, $s\ge 0$, $s_{i}\ge 0\; (i=1,\;\ldots ,\;N)$, $\sum _{i=1}^{N}\theta _{i} =1$, $s=\sum _{i=1}^{N}\theta _{i}  s_{i} $, $1/p=\sum _{i=1}^{N}{\theta _{i}/p_{i}}$ and $1/q=\sum _{i=1}^{N}{\theta _{i}/q_{i}}$.
Then we have $\bigcap _{i=1}^{N}\dot{H}_{p_{i},q_{i} }^{s_{i} }  \subset \dot{H}_{p,q}^{s} $ and for any $f\in \bigcap _{i=1}^{N}\dot{H}_{p_{i},q_{i} }^{s_{i} }  $,
\begin{equation} \nonumber
\left\|f\right\| _{\dot{H}_{p,q}^{s} } \le \prod _{i=1}^{N}\left\| f\right\| _{\dot{H}_{p_{i},q_{i} }^{s_{i} } }^{\theta _{i} }  .
\end{equation}
\end{lemma}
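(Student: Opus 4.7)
The plan is to establish this convexity–H\"older inequality by complex interpolation, first reducing to the two-exponent case $N=2$ and then iterating. Assuming the case $N=2$ is in hand, an induction on $N$ is routine: if some $\theta_i=0$ the corresponding factor drops out and one reduces the index; otherwise set $\theta_i':=\theta_i/(1-\theta_N)$ for $i<N$, and let $\bar s=\sum_{i<N}\theta_i's_i$, $1/\bar p=\sum_{i<N}\theta_i'/p_i$, $1/\bar q=\sum_{i<N}\theta_i'/q_i$. The inductive hypothesis gives $\|f\|_{\dot H^{\bar s}_{\bar p,\bar q}}\le\prod_{i<N}\|f\|_{\dot H^{s_i}_{p_i,q_i}}^{\theta_i'}$, and applying the $N=2$ case to the pair $(\bar s,\bar p,\bar q),(s_N,p_N,q_N)$ with weights $(1-\theta_N,\theta_N)$ yields the full statement.

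For $N=2$, write $s=(1-\theta)s_1+\theta s_2$ and the analogous relations for $p,q$. The core argument is Stein's complex interpolation applied to the analytic family of Fourier multipliers $T_z=(-\Delta)^{[(1-z)s_1+zs_2]/2}$ on the strip $S=\{z\in\mathbb C:0\le\mathrm{Re}(z)\le 1\}$. Given $f$ in the intersection of the two Sobolev-Lorentz spaces (which one may take to be Schwartz by a density/truncation argument since $q_i<\infty$), set $A_i:=\|f\|_{\dot H^{s_i}_{p_i,q_i}}$ and assume both positive. Consider the normalized expression $\Phi(z):=e^{z^2-\theta^2}A_1^{z-1}A_2^{-z}\,T_zf$. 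On $\mathrm{Re}(z)=0$ one has $\|\Phi(z)\|_{L^{p_1,q_1}}\lesssim e^{-\mathrm{Im}(z)^2}(1+|\mathrm{Im}(z)|)^{N_1}\lesssim 1$, and likewise $\|\Phi(z)\|_{L^{p_2,q_2}}\lesssim 1$ on $\mathrm{Re}(z)=1$, the Gaussian factor absorbing the polynomial growth in $|\mathrm{Im}(z)|$ of the imaginary powers of $-\Delta$. Stein's theorem, combined with the complex-interpolation identification $[L^{p_1,q_1},L^{p_2,q_2}]_\theta=L^{p,q}$, then yields $\|\Phi(\theta)\|_{L^{p,q}}\lesssim 1$, which unravels to $\|f\|_{\dot H^s_{p,q}}\lesssim A_1^{1-\theta}A_2^{\theta}$; the implicit constant is absorbed by rescaling the normalization ($A_i\mapsto CA_i$).

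Two auxiliary ingredients power this program. First, $(-\Delta)^{i\tau}$ is bounded on $L^{p,q}$ for every $1<p<\infty$, $1\le q\le\infty$, with operator norm polynomial in $|\tau|$: the symbol $|\xi|^{i\tau}$ satisfies Mihlin's hypotheses with seminorms growing polynomially in $|\tau|$, so Mihlin's theorem gives $L^r$-boundedness with polynomial norm, and Marcinkiewicz interpolation upgrades this to $L^{p,q}$ — precisely the scheme already used in the proof of Lemma \ref{lem 3.4.}. Second, the complex interpolation identity $[L^{p_1,q_1},L^{p_2,q_2}]_\theta=L^{p,q}$ under the stated exponent relations is classical when $1<p_i<\infty$ and $1\le q_i<\infty$ (see e.g.\ Bergh--L\"ofstr\"om). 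The main obstacle is the bookkeeping needed to verify the admissibility hypotheses of Stein's theorem for $\Phi$ in this Lorentz setting, namely the $L^{p_i,q_i}$-valued analyticity of $z\mapsto T_zf$ on the interior of $S$ together with the uniform-in-strip growth control; once the polynomial estimate on the imaginary powers is established, these verifications are routine, and the result follows.
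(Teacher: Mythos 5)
Your argument via Stein's complex interpolation for an analytic family of fractional-Laplacian powers is correct in spirit but is a genuinely different route from the paper's. The paper proves the inequality by purely real-variable, Littlewood--Paley means: it invokes the square-function characterization in Lorentz spaces from \cite{AT21}, namely $\left\|(-\Delta)^{s/2}f\right\|_{L^{p,q}}\sim\left\|\{2^{js}\tilde{Q}_j f\}_j\right\|_{L^{p,q}(\ell^2)}$, then writes $2^{js}\tilde{Q}_j f=\prod_{i}(2^{js_i}\tilde{Q}_j f)^{\theta_i}$ pointwise in $x$ (using $s=\sum\theta_i s_i$), applies H\"older's inequality inside the $\ell^2$ norm in $j$, and finishes with the iterated H\"older inequality in Lorentz norms (Lemmas \ref{lem 2.2.} and \ref{lem 2.4.}). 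This makes the whole proof a short chain of elementary estimates resting only on facts already established in the paper. Your approach instead leans on two external inputs. The first, polynomial-in-$|\tau|$ boundedness of $(-\Delta)^{i\tau}$ on $L^{p,q}$ via Mihlin plus Marcinkiewicz, is unproblematic and matches the multiplier scheme used in Lemma \ref{lem 3.4.}. The second, the complex-interpolation identity $[L^{p_1,q_1},L^{p_2,q_2}]_\theta=L^{p,q}$ with the convexity relation also in the secondary index $q$, is more delicate than ``classical, see Bergh--L\"ofstr\"om'' suggests: Bergh--L\"ofstr\"om treats the real-interpolation statement $(L^{p_0,q_0},L^{p_1,q_1})_{\theta,q}=L^{p,q}$, while the complex-method identification with $1/q=(1-\theta)/q_1+\theta/q_2$ (in particular the equal-$p$ case) is a separate, harder theorem that needs a more careful citation. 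You also still need to pass from Stein's conclusion on the abstract intermediate space $[L^{p_1,q_1},L^{p_2,q_2}]_\theta$ to $L^{p,q}$ together with a density argument for the test class, and the induction step requires checking that the intermediate exponents $\bar p,\bar q$ remain in the admissible range. None of this is a fatal gap, and your high-level argument is appealing and flexible; but it buys concision at the cost of stronger external inputs, whereas the paper's Littlewood--Paley computation is more elementary and entirely self-contained given the tools already developed.
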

\begin{proof}
Let us introduce $\eta\in C_{0}^{\infty}(\mathbb R^{d})$, nonnegative function supported in $\{1/2<|\xi|<2\}$ and satisfying
$$
\sum_{j=-\infty}^{\infty}\eta(2^{j}\xi)\equiv 1,~\xi\in \mathbb R^{d}\setminus\{0\}.
$$
Define Fourier multiplier operators
$$
Q_{j}f=\mathscr{F}^{-1}\left(\eta(2^{-j}\xi)\hat{f}\right),~f\in S'(\mathbb R^{d}),~j\in \mathbb Z.
$$
Considering the square-function operator
$$
S(f)(x)=\left(\sum_{j\in \mathbb Z}{|Q_{j}(f)(x)|^{2}}\right)^{1/2},~x\in \mathbb R^{d},
$$
it was proved in Lemma 2.5 of \cite{AT21} that
\begin{equation} \label{GrindEQ__3_7_}
\left\|f\right\|_{L^{p,q}}\sim \left\|S(f)\right\|_{L^{p,q}},~ \textrm{for}~ 1<p<\infty,~ 1\le q<\infty.
\end{equation}
Using \eqref{GrindEQ__3_7_} and the fact that (see page 5418 in \cite{AT21})
\begin{equation} \nonumber
\left(\sum_{j\in \mathbb Z}{|Q_{j}((-\Delta)^{s/2}f)|^{2}}\right)^{1/2}
\sim \left(\sum_{j\in \mathbb Z}{2^{2js}|\tilde{Q}_{j}(f)|^{2}}\right)^{1/2},
\end{equation}
where
$$
\tilde{Q}_{j}f=\mathscr{F}^{-1}\left(\tilde{\eta}(2^{-j}\xi)\hat{f}\right),~f\in S'(\mathbb R^{d}),~j\in \mathbb Z.
$$
for $\tilde{\eta}\in C_{0}^{\infty}(\{1/2<|\xi|<2\})$ satisfying $\tilde{\eta}\cdot\eta\equiv \eta$, we have
\begin{eqnarray}\begin{split} \label{GrindEQ__3_8_}
\left\|(-\Delta)^{s/2}f\right\|_{L^{p,q}}&\sim \left\|S((-\Delta)^{s/2}f)\right\|_{L^{p,q}}
&\sim \left\|\left(\sum_{j\in \mathbb Z}{2^{2js}|\tilde{Q}_{j}(f)|^{2}}\right)^{1/2}\right\|_{L^{p,q}}.
\end{split}\end{eqnarray}
Using \eqref{GrindEQ__3_8_} and H\"{o}lder inequality, we have
\footnote[2] {For $1\le p,q,r<\infty$, $\left\|\{a_{j}(x)\}\right\|_{L^{p,q}(l^{r})}:=\left\|\left\|\{a_{j}(x)\}\right\|_{l^{r}}\right\|_{L^{p,q}}=\left\|(\sum_{j}|a_{j}(x)|^{r})^{1/r}\right\|_{L^{p,q}}$.}
\begin{eqnarray}\begin{split} \nonumber
\left\|(-\Delta)^{s/2}f\right\|_{L^{p,q}}&\lesssim \left\|\{2^{js}\tilde{Q}_{j}(f)\}\right\|_{L^{p,q}(l^{2})}
=\left\|\left\{\prod _{i=1}^{N}(2^{js_{i}}\tilde{Q}_{j}(f))^{\theta_{i}}\right\}\right\|_{L^{p,q}(l^{2})}\\
&\lesssim \left\|\prod _{i=1}^{N} \left\|\{2^{js_{i}}\tilde{Q}_{j}(f)\}\right\|_{l^{2}}^{\theta_{i}}\right\|_{L^{p,q}}
\lesssim \prod _{i=1}^{N}{\left\|\{2^{js_{i}}\tilde{Q}_{j}(f)\}\right\|_{L^{p_{i},q_{i}}(l^{2})}^{\theta_{i}}}\\
&\lesssim \prod _{i=1}^{N}\left\|(-\Delta)^{s_{i}/2}f\right\| _{L^{p_{i},q_{i} } }^{\theta _{i}},
\end{split}\end{eqnarray}
this completes the proof.
\end{proof}

\begin{proof}[{\bf Proof of Lemma \ref{lem 3.11.}}]
The proof in the case $s\le 1$ can be found in Lemma 2.4 of \cite{AT21}. So it suffices to consider the case $s>1$.
By Lemma \ref{lem 3.3.}, we have
\begin{equation} \label{GrindEQ__3_9_}
\left\| F(u)\right\|_{\dot{H}_{p,q}^{s} } \lesssim\sum_{|\alpha |=[s]}\left\| D^{\alpha } F(u)\right\| _{\dot{H}_{p,q}^{v}},
\end{equation}
where $v=s-[s]$. Without loss of generality and for simplicity, we assume that $F$ is a function of a real variable. It follows from the Leibniz rule of derivatives that
\[D^{\alpha } F(u)=\sum _{k=1}^{|\alpha |}\sum _{\Lambda _{\alpha }^{k} }C_{\alpha ,\;k} F^{(k)} (u)\prod _{i=1}^{k}D^{\alpha _{i} } u ,\]
where $\Lambda _{\alpha }^{k} =\left(\alpha _{1} +\cdots+\alpha _{i}+\cdots+\alpha _{k} =\alpha ,\;|\alpha _{i}|\ge 1\right)$.
Hence we have
\begin{eqnarray}\begin{split} \label{GrindEQ__3_10_}
\left\| F(u)\right\|_{\dot{H}_{p,q}^{s} } &\lesssim \sum_{|\alpha |=[s]}\sum _{k=1}^{|\alpha |}\sum _{\Lambda _{\alpha }^{k} } \left\|  F^{(k)} (u)\prod _{i=1}^{k}D^{\alpha _{i} } u  \right\| _{\dot{H}_{p,q}^{v} }.
\end{split}\end{eqnarray}

Let us estimate
\begin{equation} \label{GrindEQ__3_11_}
I_{k}\equiv\left\|  F^{(k)} (u)\prod _{i=1}^{k}D^{\alpha _{i} } u  \right\| _{\dot{H}_{p,q}^{v} },
\end{equation}
where $1\le k\le [s]$, $|\alpha _{1}|+\cdots+|\alpha _{i}|+\cdots+|\alpha _{k}|=[s]$ and $|\alpha _{i}|\ge 1$.

We divide the study in two cases: $s\in \mathbb N$ and $s\notin \mathbb N$.

{\bf Case 1.} We consider the case $s\in \mathbb N$, i.e. $v=0$.
For $i=1,\ldots k$, we define
\begin{equation} \label{GrindEQ__3_12_}
\frac{1}{\alpha_{k_i}}:=\frac{|\alpha_{i}|}{s}\frac{1}{p_{2_k}}+\left(1-\frac{|\alpha_{i}|}{s}\right)\frac{1}{p_{3_k}},~
\frac{1}{\beta_{k_i}}:=\frac{|\alpha_{i}|}{s}\frac{1}{q_{2_k}}+\left(1-\frac{|\alpha_{i}|}{s}\right)\frac{1}{q_{3_k}}.
\end{equation}
Using \eqref{GrindEQ__3_11_}, lemmas \ref{lem 2.4.}, \ref{lem 3.3.} and \ref{lem 3.14.}, we have
\begin{eqnarray}\begin{split}\label{GrindEQ__3_13_}
I_{k}& \le \left\| F^{(k)} (u)\right\| _{L^{p_{1_k},q_{1_k}}} \prod_{i=1}^{k}{\left\|D^{\alpha _{i} } u\right\|_{L^{\alpha_{k_i},\beta_{k_i}}}}\\
&\lesssim \left\| F^{(k)} (u)\right\| _{L^{p_{1_k},q_{1_k}}}
\prod_{i=1}^{k} (\left\|u\right\| _{\dot{H}_{p_{2_k},q_{2_k}}^{s}}^{\frac{|\alpha_{i}|}{s}}
\left\|u\right\| _{L^{p_{3_k},q_{3_k}}}^{1-\frac{|\alpha_{i}|}{s}})\\
& \lesssim \left\| F^{(k)} (u)\right\| _{L^{p_{1_k},q_{1_k}}}\left\|u\right\| _{\dot{H}_{p_{2_k},q_{2_k}}^{s}} \left\|u\right\| _{L^{p_{3_k},q_{3_k}}}^{k-1}.
\end{split}\end{eqnarray}
In view of \eqref{GrindEQ__3_10_}, \eqref{GrindEQ__3_11_} and \eqref{GrindEQ__3_13_}, we get the desired result.

{\bf Case 2.} We consider the case $s\notin \mathbb N$.
For $1\le k\le [s]$, putting
\begin{equation} \label{GrindEQ__3_14_}
\frac{1}{a_{k}} =\frac{1}{p_{2_k}}+\frac{k-1}{p_{3_k}},~\frac{1}{b_{k}} =\frac{1}{q_{2_k}}+\frac{k-1}{q_{3_k}},
\end{equation}
\begin{equation} \label{GrindEQ__3_15_}
\frac{1}{c_{k}} =\frac{1}{p_{1_{k+1}}}+\frac{v}{s}\frac{1}{p_{2_{k+1}}}+\frac{[s]}{s}\frac{1}{p_{3_{k+1}}},~
\frac{1}{d_{k}} =\frac{1}{q_{1_{k+1}}}+\frac{v}{s}\frac{1}{q_{2_{k+1}}}+\frac{[s]}{s}\frac{1}{q_{3_{k+1}}},
\end{equation}
and
\begin{equation} \label{GrindEQ__3_16_}
\frac{1}{e_{k}} =\frac{[s]}{s}\frac{1}{p_{2_{k+1}}}+\left(k-\frac{[s]}{s}\right)\frac{1}{p_{3_{k+1}}},~
\frac{1}{f_{k}} =\frac{[s]}{s}\frac{1}{q_{2_{k+1}}}+\left(k-\frac{[s]}{s}\right)\frac{1}{q_{3_{k+1}}},
\end{equation}
we deduce from \eqref{GrindEQ__3_2_} that
\begin{equation} \label{GrindEQ__3_17_}
\frac{1}{p}=\frac{1}{p_{1_{k+1}}}+\frac{1}{a_{k}}=\frac{1}{c_k}+\frac{1}{e_k},~\frac{1}{q}=\frac{1}{q_{1_{k+1}}}+\frac{1}{b_{k}}=\frac{1}{d_k}+\frac{1}{f_k}.
\end{equation}
Using \eqref{GrindEQ__3_17_} and Lemma \ref{lem 3.8.} (fractional product rule), we have
\begin{eqnarray}\begin{split} \label{GrindEQ__3_18_}
I_{k}&\lesssim \left\| F^{(k)} (u)\right\| _{L^{p_{1_k},q_{1_k}}} \left\| \prod _{i=1}^{k}D^{\alpha _{i} } u \right\| _{\dot{H}_{a_k,b_k}^{v}}
+\left\| F^{(k)} (u)\right\| _{\dot{H}_{c_k,d_k}^{v}}\left\| \prod _{i=1}^{k}D^{\alpha _{i} } u \right\| _{L^{e_k,f_k}}\\
&~~:=I_{k_1}+I_{k_2}.
\end{split}\end{eqnarray}

First, we estimate $I_{k_1}$. If $k=1$, we can see that $\left|\alpha _{1} \right|=[s]$, $a_{k} =p_{2_k}$ and $b_{k} =q_{2_k}$. Hence, we immediately have
\[\left\| D^{\alpha _{1}} u\right\| _{\dot{H}_{a_{k},b_{k} }^{v} } \lesssim\left\| u\right\| _{\dot{H}_{p_{2_k},q_{2_k}}^{s} } .\]
We consider the case $k>1$. For $1\le i\le k$, putting

\begin{equation} \label{GrindEQ__3_19_}
\frac{1}{a_{k_i}}:=\frac{|\alpha_{i}|}{s}\frac{1}{p_{2_k}}+\left(1-\frac{|\alpha_{i}|}{s}\right)\frac{1}{p_{3_k}},~
\frac{1}{b_{k_i}}:=\frac{|\alpha_{i}|}{s}\frac{1}{q_{2_k}}+\left(1-\frac{|\alpha_{i}|}{s}\right)\frac{1}{q_{3_k}},
\end{equation}
\begin{equation} \label{GrindEQ__3_20_}
\frac{1}{\tilde{a}_{k_i}}:=\frac{|\alpha_{i}|+v}{s}\frac{1}{p_{2_k}}+\left(1-\frac{|\alpha_{i}|+v}{s}\right)\frac{1}{p_{3_k}},~
\frac{1}{\tilde{b}_{k_i}}:=\frac{|\alpha_{i}|+v}{s}\frac{1}{q_{2_k}}+\left(1-\frac{|\alpha_{i}|+v}{s}\right)\frac{1}{q_{3_k}},
\end{equation}
we can see that
\begin{equation} \label{GrindEQ__3_21_}
\frac{1}{a_{k} } =\sum _{j\in I_{k}^{i} }\frac{1}{a_{k_j} }  +\frac{1}{\tilde{a}_{k_i} } ,~\frac{1}{b_{k} } =\sum _{j\in I_{k}^{i} }\frac{1}{b_{k_j} }  +\frac{1}{\tilde{b}_{k_i} },
\end{equation}
where $I_{k}^{i} =\left\{j\in \mathbb N:\;1\le j\le k,\;j\ne i\right\}$.
By using Corollary \ref{cor 3.9.}, Lemma \ref{lem 3.14.} and \eqref{GrindEQ__3_21_}, we have
\begin{eqnarray}\begin{split} \label{GrindEQ__3_22_}
\left\| \prod _{i=1}^{k}D^{\alpha _{i} } u \right\| _{\dot{H}_{a_k,b_k}^{v}}
&\lesssim \sum _{i=1}^{k}(\left\| D^{\alpha _{i} } u\right\| _{\dot{H}_{\tilde{a}_{k_i},\tilde{b}_{k_i} }^{v} } \prod _{j\in I_{k}^{i} }\left\| D^{\alpha _{j} } u \right\| _{L^{a_{k_j},b_{k_j}}})\\
&\lesssim\sum _{i=1}^{k}(\left\| u\right\| _{\dot{H}_{\tilde{a}_{k_i},\tilde{b}_{k_i} }^{|\alpha _{i}|+v} } \prod _{j\in I_{k}^{i} }\left\| u\right\| _{\dot{H}_{a_{k_j},b_{k_j} }^{|\alpha _{j}|} })\\
&\lesssim \sum _{i=1}^{k} \left(\left\| u\right\| _{\dot{H}_{p_{2_k},q_{2_k}}^{s} }^{\frac{|\alpha _{i}|+v}{s}}\left\|u\right\|_{L^{p_{3_k},q_{3_k}}}^{1-\frac{|\alpha _{i}|+v}{s}}
\prod _{j\in I_{k}^{i} }(\left\| u\right\| _{\dot{H}_{p_{2_k},q_{2_k}}^{s} }^{\frac{|\alpha _{j}|}{s}}\left\|u\right\|_{L^{p_{3_k},q_{3_k}}}^{1-\frac{|\alpha _{j}|}{s}})\right)\\
&=\left\| u\right\| _{\dot{H}_{p_{2_k},q_{2_k}}^{s} }\left\|u\right\|_{L^{p_{3_k},q_{3_k}}}^{k-1}.
\end{split}\end{eqnarray}
Hence, for any $1\le k\le [s]$, we have
\begin{equation} \label{GrindEQ__3_23_}
I_{k_1} \lesssim \left\| F^{(k)}(u)\right\| _{L^{p_{1_k},q_{1_k}}} \left\| u\right\| _{\dot{H}_{p_{2_k},q_{2_k}}^{s} }\left\|u\right\|_{L^{p_{3_k},q_{3_k}}}^{k-1}.
\end{equation}

Next, we estimate $I_{k_2}$.
Using \eqref{GrindEQ__3_15_}, Lemmas \ref{lem 3.10.} and \ref{lem 3.14.}, we have
\begin{eqnarray}\begin{split} \label{GrindEQ__3_24_}
\left\| F^{(k)} (u)\right\| _{\dot{H}_{c_k,d_k}^{v}}&\lesssim \left\| F^{(k+1)} (u)\right\| _{L^{p_{1_k},q_{1_k}}}\left\| u\right\| _{\dot{H}_{g_k,h_k}^{v}}\\
&\lesssim \left\| F^{(k+1)} (u)\right\| _{L^{p_{1_k},q_{1_k}}} \left\| u\right\| _{\dot{H}_{p_{2_k},q_{2_k}}^{s}}^{\frac{v}{s}}\left\|u\right\|_{L^{p_{3_k},q_{3_k}}}^{\frac{[s]}{s}},
\end{split}\end{eqnarray}
where
$$
\frac{1}{g_k}:=\frac{v}{s}\frac{1}{p_{2_{k+1}}}+\frac{[s]}{s}\frac{1}{p_{3_{k+1}}},~\frac{1}{h_k}:=\frac{v}{s}\frac{1}{q_{2_{k+1}}}+\frac{[s]}{s}\frac{1}{q_{3_{k+1}}}.
$$
It also follows from \eqref{GrindEQ__3_16_}, \eqref{GrindEQ__3_19_} and Lemma \ref{lem 3.14.} that
\begin{eqnarray}\begin{split} \label{GrindEQ__3_25_}
\left\| \prod _{i=1}^{k}D^{\alpha _{i} } u \right\| _{L^{e_k,f_k}}
&\lesssim \prod _{i=1}^{k} \left\| D^{\alpha _{i} } u \right\| _{L^{a_{k_i},b_{k_i}}}
\lesssim \prod _{i=1}^{k}(\left\| u\right\| _{\dot{H}_{p_{2_k},q_{2_k}}^{s} }^{\frac{|\alpha _{i}|}{s}}\left\|u\right\|_{L^{p_{3_k},q_{3_k}}}^{1-\frac{|\alpha _{i}|}{s}})\\
&=\left\| u\right\| _{\dot{H}_{p_{2_k},q_{2_k}}^{s} }^{\frac{[s]}{s}}\left\|u\right\|_{L^{p_{3_k},q_{3_k}}}^{k-\frac{[s]}{s}}.
\end{split}\end{eqnarray}
In view of \eqref{GrindEQ__3_24_} and \eqref{GrindEQ__3_25_}, we have
\begin{equation} \label{GrindEQ__3_26_}
I_{k_2} \lesssim \left\| F^{(k+1)} (u)\right\| _{L^{p_{1_k},q_{1_k}}} \left\| u\right\| _{\dot{H}_{p_{2_k},q_{2_k}}^{s}}\left\|u\right\|_{L^{p_{3_k},q_{3_k}}}^{k},
\end{equation}
for any $1\le k\le [s]$. In view of \eqref{GrindEQ__3_9_}, \eqref{GrindEQ__3_10_}, \eqref{GrindEQ__3_18_}, \eqref{GrindEQ__3_23_} and \eqref{GrindEQ__3_26_}, we can get \eqref{GrindEQ__3_3_}. This completes the proof.
\end{proof}


\section{Well-posedness for the IBNLS equation}\label{sec 4.}

In this section, we establish the local and global well-posedness for the Cauchy problem of the IBNLS equation \eqref{GrindEQ__1_1_}, i.e. we prove Theorems \ref{thm 1.1.} and \ref{thm 1.3.}.

First of all, we recall the Strichartz estimates in Lorentz spaces, which follows from Theorem 10.1 of \cite{KT98} with the notation of \cite{KT98} we take $\sigma=\frac{d}{4}$, $H=B_{0}=L^{2}$, $B_{1}=L^{1}$, $\theta=1-\frac{4}{p}$, $B_{\theta}=L^{p',2}$.

\begin{lemma}[Strichartz estimates in Lorentz spaces]\label{lem 4.1.}
Let $S(t)=e^{it\Delta^{2} } $. Then for any admissible pairs $(\gamma(p),p)$ and $(\gamma(r),r)$, we have
\begin{equation} \label{GrindEQ__4_1_}
\left\| S(t)\phi \right\| _{L^{\gamma(p)}(\mathbb R,L^{p,2})} \lesssim\left\| \phi \right\| _{L^{2} } ,
\end{equation}
\begin{equation} \label{GrindEQ__4_2_}
\left\|\int_{0}^{t}S(t-\tau)f(\tau)d\tau\right\|_{L^{\gamma(p)}(\mathbb R,L^{p,2})}\lesssim\left\|f\right\|_{L^{\gamma(r)'}(\mathbb R,L^{r',2})}.
\end{equation}
\end{lemma}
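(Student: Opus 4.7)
The plan is to invoke the abstract Strichartz machinery of Keel--Tao \cite{KT98}, whose only hypotheses are (i) the $L^2$ energy identity for $S(t)=e^{it\Delta^2}$ and (ii) an untruncated $L^1\to L^\infty$ dispersive decay. Both are classical for the biharmonic Schr\"odinger group, and the Lorentz target $L^{p,2}$ is not extra work: it is built into the Keel--Tao conclusion, whose estimates live in the real-interpolation spaces $(B_0,B_1)_{\theta,2}$, which for $B_0=L^2$ and $B_1=L^1$ are precisely Lorentz spaces.

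First, I would verify the two structural estimates. Plancherel together with $|e^{it|\xi|^4}|=1$ gives $\|S(t)\phi\|_{L^2}=\|\phi\|_{L^2}$. For the dispersive estimate, write $S(t)\phi=K_t*\phi$ with
\[
K_t(x)=(2\pi)^{-d}\int_{\mathbb R^d}e^{i(x\cdot\xi+t|\xi|^4)}\,d\xi.
\]
A standard stationary-phase/rescaling analysis of the phase $x\cdot\xi+t|\xi|^4$ yields $\|K_t\|_{L^\infty}\lesssim|t|^{-d/4}$ and hence
\[
\|S(t)\phi\|_{L^\infty}\lesssim|t|^{-d/4}\|\phi\|_{L^1},\qquad t\ne 0,
\]
matching the Keel--Tao dispersive hypothesis with decay exponent $\sigma=d/4$.

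Next, I would apply Theorem 10.1 of \cite{KT98} with the parameters indicated in the statement. The Keel--Tao admissibility condition $\frac{1}{q}=\sigma(\tfrac{1}{2}-\tfrac{1}{r})$ with $r=p$ becomes exactly $\frac{4}{\gamma(p)}=\frac{d}{2}-\frac{d}{p}$, i.e.\ the $B$-admissibility relation \eqref{GrindEQ__1_7_}, and the spatial target space provided by the theorem is the Lorentz space $L^{p,2}$, as appearing in \eqref{GrindEQ__4_1_} and \eqref{GrindEQ__4_2_}. Both the homogeneous and the retarded estimates then fall out of the abstract theorem directly, the latter via the usual $TT^*$/Christ--Kiselev trick already incorporated in \cite{KT98}.

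The main delicate point, and the reason for the Lorentz formulation, is the endpoint $p=\frac{2d}{d-4}$ in dimension $d>4$: there the classical $L^{\gamma(p)}_tL^p_x$ bound is known to fail, and the bilinear interpolation and atomic decomposition argument in \cite{KT98} is engineered precisely to recover the sharp Lorentz target $L^{\gamma(p)}_tL^{p,2}_x$. For non-endpoint pairs the Lorentz estimate is in fact stronger than its $L^p$ counterpart (since $L^{p,2}\hookrightarrow L^p$ when $p>2$ by Lemma~\ref{lem 2.3.}), so no loss is incurred away from the endpoint either. Consequently the entire proof reduces to verifying the two hypotheses above and reading off the Keel--Tao conclusion.
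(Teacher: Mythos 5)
Your argument is correct and is essentially the same as the paper's: both derive Lemma~\ref{lem 4.1.} by reading off Theorem~10.1 of \cite{KT98} with $\sigma=d/4$, $H=B_0=L^2$, $B_1=L^1$, and $B_\theta=(L^2,L^1)_{\theta,2}=L^{p',2}$, so that the dual (output) space is the Lorentz space $L^{p,2}$. You additionally spell out the two hypotheses (Plancherel energy conservation and the $|t|^{-d/4}$ dispersive decay of $e^{it\Delta^2}$) that the paper leaves implicit, which is a harmless elaboration rather than a different route.
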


Next, we recall the useful fact concerning to the term $|x|^{-b}$ with $b>0$.
\begin{remark}[\cite{G17}]\label{rem 4.2.}
\textnormal{Let $b>0$, $s\ge 0$ and $b+s<d$. Then we have $(-\Delta)^{s/2}(|x|^{-b})=C_{d,b}|x|^{-b-s}$.}
\end{remark}
\subsection{Local well-posedness}
In this section, we prove Theorem \ref{thm 1.1.}.
To this end, we get the following nonlinear estimates.
\begin{lemma}\label{lem 4.3.}
Let $0\le s< \min\{2+\frac{d}{2},\;d\}$, $0<b<\min\{4,\;2+\frac{d}{2}-s,\;d-s\}$ and $0<\sigma\le \sigma_{c}(s)$ with $\sigma<\infty$. If $\sigma$ is not an even integer, assume that $\sigma>\left\lceil s\right\rceil -1$. Then for any interval $I(\subset \mathbb R)$, there exist $B$-admissible pairs $(\gamma(p),p)$ and $(\gamma(r),r)$ such that
\begin{equation} \label{GrindEQ__4_3_}
\left\||u|^{\sigma}u\right\|_{L^{\gamma(p)'}(I,\; \dot{H}_{p',2}^{s})}
\lesssim |I|^{\theta}\left\|u\right\|^{\sigma+1}_{L^{\gamma(r)}(I,\; H_{r,2}^{s})},
\end{equation}
\begin{equation} \label{GrindEQ__4_4_}
\left\||u|^{\sigma}v\right\|_{L^{\gamma(p)'}(I,\; L^{p',2})}
\lesssim |I|^{\theta}\left\|u\right\|^{\sigma}_{L^{\gamma(r)}(I,\; H_{r,2}^{s})}\left\|u\right\|_{L^{\gamma(r)}(I,\; L^{r,2})},
\end{equation}
where $\theta>0$ in the subcritical case $\sigma<\sigma_{c}(s)$, and $\theta=0$ in the critical case $\sigma=\sigma_{c}(s)$.
\end{lemma}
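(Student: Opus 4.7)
The plan is to establish both \eqref{GrindEQ__4_3_} and \eqref{GrindEQ__4_4_} (with the understanding that the nonlinearity carries the inhomogeneous weight $|x|^{-b}$ coming from the IBNLS equation) in two stages: a pointwise-in-time spatial estimate, followed by Hölder's inequality in $t$. The two $B$-admissible pairs $(\gamma(p),p)$ and $(\gamma(r),r)$ will be selected by a scaling/Hölder analysis so that the time exponent $\theta$ vanishes exactly at the critical threshold $\sigma=\sigma_c(s)$ and is strictly positive in the subcritical regime.

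For the fixed-time estimate of $\||x|^{-b}|u|^\sigma u\|_{\dot H^s_{p',2}}$, I would apply the fractional product rule (Lemma \ref{lem 3.8.}) to distribute $(-\Delta)^{s/2}$ between the singular weight and the nonlinearity:
\[\bigl\||x|^{-b}|u|^\sigma u\bigr\|_{\dot H^s_{p',2}} \lesssim \|\,|x|^{-b}\|_{L^{d/b,\infty}}\,\||u|^\sigma u\|_{\dot H^s_{\alpha,2}} + \bigl\|(-\Delta)^{s/2}|x|^{-b}\bigr\|_{L^{d/(b+s),\infty}}\,\||u|^\sigma u\|_{L^{\beta,2}}.\]
Lemma \ref{lem 2.1.} gives $|x|^{-b}\in L^{d/b,\infty}$, and Remark \ref{rem 4.2.} identifies $(-\Delta)^{s/2}|x|^{-b}$ with a constant multiple of $|x|^{-(b+s)}\in L^{d/(b+s),\infty}$, for which $b+s<d$ is required. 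The auxiliary exponents $\alpha,\beta$ are determined by the Hölder scaling $\frac{1}{p'}=\frac{b}{d}+\frac{1}{\alpha}=\frac{b+s}{d}+\frac{1}{\beta}$, and the upper bounds $b<d-s$ and $b<2+\frac{d}{2}-s$ are exactly what keep $\alpha,\beta$ inside $(1,\infty)$ for the subsequent estimates.

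The factor $\||u|^\sigma u\|_{\dot H^s_{\alpha,2}}$ would then be handled by the fractional chain rule of Corollary \ref{cor 3.13.}; this is where the assumption $\sigma>\lceil s\rceil-1$ for non-even $\sigma$ is used. Since that corollary forbids the second Lorentz indices from being $\infty$, I would pick them both equal to $2(\sigma+1)$ and then return to the Lorentz index $2$ via Lemma \ref{lem 3.2.} and Lemma \ref{lem 2.3.}. For $\||u|^\sigma u\|_{L^{\beta,2}}$ I would apply Lemma \ref{lem 2.2.} to rewrite it as $\|u\|_{L^{(\sigma+1)\beta,2(\sigma+1)}}^{\sigma+1}$ and again reduce the Lorentz index to $2$ using Lemma \ref{lem 2.3.}. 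The Sobolev--Lorentz embeddings of Lemma \ref{lem 3.5.} and Corollary \ref{cor 3.7.} would then absorb both resulting $L^{\cdot,2}$ norms into $\|u\|_{H^s_{r,2}}$; the constraint $s<\min\{2+\frac{d}{2},d\}$ combined with the upper bounds on $b$ is what makes such a target pair $(\gamma(r),r)$ available.

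The time integration is then carried out by Hölder's inequality, writing $\frac{1}{\gamma(p)'}=\frac{\sigma+1}{\gamma(r)}+\theta$. At the critical scaling $\sigma=\sigma_c(s)$ the $B$-admissibility identity \eqref{GrindEQ__1_7_} forces $\theta=0$, while $\sigma<\sigma_c(s)$ provides enough slack to shift $r$ slightly off the critical value and gain $\theta>0$, yielding the prefactor $|I|^\theta$. The estimate \eqref{GrindEQ__4_4_} for $|x|^{-b}|u|^\sigma v$ is strictly simpler because no fractional derivative falls on the product, so only Lemma \ref{lem 2.1.}, Lemma \ref{lem 2.4.}, Lemma \ref{lem 2.2.}, the Sobolev--Lorentz embedding, and a Hölder in time are needed. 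I expect the main obstacle to be the simultaneous bookkeeping of all these exponents: producing one concrete choice of $(\gamma(r),r)$ that makes the Sobolev embeddings in Lemma \ref{lem 3.5.}, the $B$-admissibility range \eqref{GrindEQ__1_6_}, and the Hölder-in-time identity above all compatible in both the critical and the subcritical regime.
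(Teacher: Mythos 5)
Your proposal follows the paper's proof essentially step by step: the fractional product rule (Lemma \ref{lem 3.8.}) to split $(-\Delta)^{s/2}$ between $|x|^{-b}$ and $|u|^\sigma u$, Lemma \ref{lem 2.1.} and Remark \ref{rem 4.2.} for the weight, Corollary \ref{cor 3.13.} and Lemma \ref{lem 2.2.} with the passage $L^{\cdot,2(\sigma+1)}\to L^{\cdot,2}$ via Lemma \ref{lem 2.3.} for the nonlinearity, the Sobolev--Lorentz embeddings of Lemma \ref{lem 3.5.}/Corollary \ref{cor 3.7.}, and Hölder in time with $\theta=\frac{1}{\gamma(p)'}-\frac{\sigma+1}{\gamma(r)}$. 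The one organizational detail the paper makes explicit that your sketch leaves implicit is the case split $s<\tfrac{d}{2}$ (homogeneous embedding $\dot H^s_{r,2}\hookrightarrow L^{\alpha,2}$ with the critical scaling, hence $\theta\ge 0$ iff $\sigma\le\sigma_c(s)$) versus $s\ge\tfrac{d}{2}$ (inhomogeneous embedding via Corollary \ref{cor 3.7.}, where $\sigma_c(s)=\infty$ and $\theta>0$ always), together with the explicit verification that $B$-admissible pairs satisfying all exponent constraints exist.
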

\begin{proof}
We divide the proof in two cases: $0\le s<\frac{d}{2}$ and $s\ge \frac{d}{2}$.

{\bf Case 1.} We consider the case $0\le s<\frac{d}{2}$. Let $B$-admissible pairs $(\gamma(p),p)$ and $(\gamma(r),r)$ satisfy
\begin{equation} \label{GrindEQ__4_5_}
\frac{1}{p'} =\sigma \left(\frac{1}{r} -\frac{s}{d} \right)+\frac{1}{r}+\frac{b}{d}, ~\frac{1}{r} -\frac{s}{d} >0.
\end{equation}
Using Lemma \ref{lem 3.8.}, Lemma \ref{lem 2.1.} and Remark \ref{rem 4.2.}, we have
\begin{eqnarray}\begin{split} \label{GrindEQ__4_6_}
\left\| |x|^{-b}|u|^{\sigma}u\right\| _{\dot{H}_{p',2}^{s} } &\lesssim \left\| |x|^{-b}\right\| _{L^{p_{1},\infty}}\left\| |u|^{\sigma}u\right\| _{\dot{H}_{p_{2},2}^{s}}
+\left\| |x|^{-b}\right\| _{\dot{H}_{p_{3},\infty}^{s}}\left\| |u|^{\sigma}u\right\| _{L^{p_{4},2}}\\
&\lesssim \left\| |u|^{\sigma}u\right\| _{\dot{H}_{p_{2},2}^{s}}+\left\| |u|^{\sigma}u\right\| _{L^{p_{4},2}},
\end{split}\end{eqnarray}
where
\begin{equation}\label{GrindEQ__4_7_}
\frac{1}{p_{1}}:=\frac{b}{d},~\frac{1}{p_{2}}:=\frac{1}{p'}-\frac{b}{d},~\frac{1}{p_{3}}:=\frac{b+s}{d},~\frac{1}{p_{4}}:=\frac{1}{p'}-\frac{b+s}{d}.
\end{equation}
Putting $\frac{1}{\alpha}:=\frac{1}{r}-\frac{s}{d}$, it follows from Lemma \ref{lem 3.5.} that $\dot{H}_{r,2}^{s}\hookrightarrow L^{\alpha, 2}$. Hence, using \eqref{GrindEQ__4_5_}--\eqref{GrindEQ__4_7_}, Lemmas \ref{lem 2.3.}, \ref{lem 2.4.} and Corollary \ref{cor 3.13.}, we immediately have
\begin{eqnarray}\begin{split} \label{GrindEQ__4_8_}
\left\| |x|^{-b}|u|^{\sigma}u\right\| _{\dot{H}_{p',2}^{s} } &\lesssim \left\| |u|^{\sigma}u\right\| _{\dot{H}_{p_{2},2}^{s}}+\left\| |u|^{\sigma}u\right\| _{L^{p_{4},2}}
\lesssim \left\| u\right\| _{L^{\alpha,2(\sigma+1)}}^{\sigma}\left\| u\right\| _{\dot{H}_{r,2(\sigma+1)}^{s}}+\left\| u\right\| _{L^{\alpha,2(\sigma+1)}}^{\sigma+1}\\
&\lesssim \left\| u\right\| _{L^{\alpha,2}}^{\sigma}\left\| u\right\| _{\dot{H}_{r,2}^{s}}+\left\| u\right\| _{L^{\alpha,2}}^{\sigma+1}
\lesssim \left\| u\right\| _{\dot{H}_{r,2}^{s} }^{\sigma +1}.
\end{split}\end{eqnarray}
Similarly, we also have
\begin{eqnarray}\begin{split} \label{GrindEQ__4_9_}
\left\| |x|^{-b}|u|^{\sigma}v\right\| _{L^{p',2}} &\lesssim  \left\| |x|^{-b}\right\| _{L^{p_{1},\infty}}\left\| |u|^{\sigma}v\right\| _{L^{p_{2},2}}
\lesssim \left\| u\right\| _{L^{\alpha,2}}^{\sigma}\left\| v\right\| _{L^{r,2}}
\lesssim \left\| u\right\| _{\dot{H}_{r,2}^{s} }^{\sigma }\left\| v\right\| _{L^{r,2}}.
\end{split}\end{eqnarray}
Meanwhile, it follows from \eqref{GrindEQ__4_5_} that
\begin{equation}\label{GrindEQ__4_10_}
\theta:=\frac{1}{\gamma(p)'}-\frac{\sigma+1}{\gamma(r)}=\frac{4-b}{4}-\frac{(d-2s)\sigma}{8}
\end{equation}
Using \eqref{GrindEQ__4_8_}, \eqref{GrindEQ__4_9_}, \eqref{GrindEQ__4_10_}, H\"{o}lder inequality and the fact $\sigma\le \frac{8-2b}{d-2s}$, we immediately get \eqref{GrindEQ__4_3_} and \eqref{GrindEQ__4_4_}. So it suffices to prove that there exist $B$-admissible pairs $(\gamma(p),p)$ and $(\gamma(r),r)$ satisfying \eqref{GrindEQ__4_5_}.
Let
$$
\max\left\{\frac{d-4}{2d},\;\frac{s}{d}\right\}<\frac{1}{r}<\frac{1}{2}.
$$
Then it follows from \eqref{GrindEQ__4_5_} that
\begin{equation}\label{GrindEQ__4_11_}
\max\left\{\frac{s}{d}+\frac{b}{d},\frac{(d-4)(\sigma+1)}{2d}-\frac{\sigma s}{d}+\frac{b}{d}\right\}<
\frac{1}{p'}<\frac{\sigma+1}{2}-\frac{\sigma s}{d}+\frac{b}{d}.
\end{equation}
We can easily check that there exists a $B$-admissible pair $(\gamma(p),p)$ satisfying \eqref{GrindEQ__4_11_} by using the fact $b<\min\{d-s,2+\frac{d}{2}-s\}$.

{\bf Case 2.} We consider the case $s\ge \frac{d}{2}$.
Let $B$-admissible pairs $(\gamma(p),p)$ and $(\gamma(r),r)$ satisfy
\begin{equation} \label{GrindEQ__4_12_}
\left\{\begin{array}{l}
{\frac{1}{p'}=\frac{\sigma}{\alpha}+\frac{1}{r}+\frac{b}{d}=\frac{\sigma+1}{\beta}+\frac{b+s}{d},}\\
{\theta:=\frac{1}{\gamma(p)'}-\frac{\sigma+1}{\gamma(r)}}>0,\\
\end{array}\right.
\end{equation}
for some $r<\alpha,\beta<\infty$. Noticing $2\le r<\alpha<\infty$, and using Corollary \ref{cor 3.7.}, we have $H^{s}_{r,2}\hookrightarrow L^{\alpha,2}$. Similarly, we also have $H^{s}_{r,2}\hookrightarrow L^{\beta,2}$. Hence, using Lemmas \ref{lem 2.1.}, \ref{lem 2.3.}, \ref{lem 2.4.}, \ref{lem 3.8.}, Corollary \ref{cor 3.13.}, Remark \ref{rem 4.2.} and the firs equation in \eqref{GrindEQ__4_12_}, we have
\begin{eqnarray}\begin{split} \label{GrindEQ__4_13_}
\left\| |x|^{-b}|u|^{\sigma}u\right\| _{\dot{H}_{p',2}^{s} } &\lesssim \left\| |x|^{-b}\right\| _{L^{p_{1},\infty}}\left\| |u|^{\sigma}u\right\| _{\dot{H}_{p_{2},2}^{s}}
+\left\| |x|^{-b}\right\| _{\dot{H}_{p_{3},\infty}^{s}}\left\| |u|^{\sigma}u\right\| _{L^{p_{4},2}}\\
&\lesssim \left\| |u|^{\sigma}u\right\| _{\dot{H}_{p_{2},2}^{s}}+\left\| |u|^{\sigma}u\right\| _{L^{p_{4},2}}\\
&\lesssim \left\| u\right\| _{L^{\alpha,2}}^{\sigma}\left\| u\right\| _{\dot{H}_{r,2}^{s}}+\left\| u\right\| _{L^{\beta,2}}^{\sigma+1}
\lesssim \left\| u\right\| _{\dot{H}_{r,2}^{s} }^{\sigma +1},
\end{split}\end{eqnarray}
and
\begin{eqnarray}\begin{split} \label{GrindEQ__4_14_}
\left\| |x|^{-b}|u|^{\sigma}v\right\| _{L^{p',2}} &\lesssim  \left\| |x|^{-b}\right\| _{L^{p_{1},\infty}}\left\| |u|^{\sigma}v\right\| _{L^{p_{2},2}}
\lesssim \left\| u\right\| _{L^{\alpha,2}}^{\sigma}\left\| v\right\| _{L^{r,2}}
\lesssim \left\| u\right\| _{\dot{H}_{r,2}^{s} }^{\sigma }\left\| v\right\| _{L^{r,2}},
\end{split}\end{eqnarray}
where $p_{i}~(i=\overline{1,4})$  are as in \eqref{GrindEQ__4_7_}.
Using the second equation in \eqref{GrindEQ__4_12_}, \eqref{GrindEQ__4_13_}, \eqref{GrindEQ__4_14_} and H\"{o}lder inequality, we immediately get \eqref{GrindEQ__4_3_} and \eqref{GrindEQ__4_4_}. It remains to prove that there exist $B$-admissible pairs $(\gamma(p),p)$ and $(\gamma(r),r)$ satisfying \eqref{GrindEQ__4_12_} for some $r<\alpha,\beta<\infty$. In fact, the second equation in \eqref{GrindEQ__4_12_} implies that
\begin{equation}\label{GrindEQ__4_15_}
\frac{1}{p'}<\frac{4}{d}+\frac{\sigma+1}{r}-\frac{\sigma}{2}.
\end{equation}
Since $r<\alpha,\beta<\infty$, it follows from the first equation in \eqref{GrindEQ__4_12_} that
\begin{equation}\label{GrindEQ__4_16_}
\max\left\{\frac{1}{r}+\frac{b}{d},~\frac{b+s}{d}\right\}<\frac{1}{p'}<\frac{\sigma+1}{r}+\frac{b}{d}.
\end{equation}
Hence, it suffices to prove that there exist $B$-admissible pairs $(\gamma(p),p)$ and $(\gamma(r),r)$ satisfying \eqref{GrindEQ__4_15_} and \eqref{GrindEQ__4_16_}.

First, we consider the case $d\le 4$. In this case, we can see that there exits a $B$-admissible pair $(\gamma(p),p)$ satisfying \eqref{GrindEQ__4_15_} and \eqref{GrindEQ__4_16_} provided that
\begin{equation}\label{GrindEQ__4_17_}
b<d-s,~\frac{1}{r}<1-\frac{b}{d}
\end{equation}
and
\begin{equation}\label{GrindEQ__4_18_}
\frac{\sigma+1}{r}>\max\left\{\frac{1}{2}-\frac{b}{d},~\frac{\sigma+1}{2}-\frac{4}{d},~
\frac{\sigma}{2}+\frac{b+s-4}{d},~\frac{\sigma+1}{\sigma}\left(\frac{\sigma}{2}+\frac{b-4}{d}\right)\right\}.
\end{equation}
Using the fact that $b<d-s$ and $s\ge \frac{d}{2}$, we can verify that there exists a $B$-admissible pair $(\gamma(r),r)$ satisfying \eqref{GrindEQ__4_17_} and \eqref{GrindEQ__4_18_}.

Next, we consider the case $d\ge 5$. In this case, we can see that there exits a $B$-admissible pair $(\gamma(p),p)$ satisfying \eqref{GrindEQ__4_15_} and \eqref{GrindEQ__4_16_} provided that
\begin{equation}\label{GrindEQ__4_19_}
b<2+\frac{d}{2}-s,~\frac{1}{r}<\frac{d+4-2b}{2d}
\end{equation}
and
\begin{equation}\label{GrindEQ__4_20_}
\frac{\sigma+1}{r}>\max\left\{\frac{1}{2}-\frac{b}{d},~\frac{\sigma+1}{2}-\frac{4}{d},~
\frac{\sigma}{2}+\frac{b+s-4}{d},~\frac{\sigma+1}{\sigma}\left(\frac{\sigma}{2}+\frac{b-4}{d}\right)\right\}.
\end{equation}
Using the fact that $b<2+\frac{d}{2}-s$ and $s\ge \frac{d}{2}$, we can easily see that there exists a $B$-admissible pair $(\gamma(r),r)$ satisfying \eqref{GrindEQ__4_19_} and \eqref{GrindEQ__4_20_}. This completes the proof.
\end{proof}

Now we are ready to prove Theorem \ref{thm 1.1.}.
\begin{proof}[{\bf Proof of Theorem \ref{thm 1.1.}}]
Since the proof is standard, we only sketch the proof. Let $T>0$ and $M>0$ which will be chosen later. Given $I=[-T,\;T]$, we define
\begin{equation} \nonumber
D=\left\{u\in L^{\gamma (r)} (I,\;H^{s}_{r,2}):\;\left\| u\right\| _{L^{\gamma (r)} \left(I,\;H^{s}_{r,2}\right)} \le M\right\},
\end{equation}
where $(\gamma(r),r)$ is as in Lemma \ref{lem 4.3.}.
Putting
\begin{equation} \nonumber
d\left(u,\;v\right)=\;\left\| u-v\right\| _{L^{\gamma(r)} (I,\;L^{r,2})},
\end{equation}
$(D,d)$ is a complete metric space (see \cite{AT21}).
Now we consider the mapping
\begin{equation} \label{GrindEQ__4_21_}
G:\;u(t)\to S(t)u_{0} -i\lambda \int _{0}^{t}S(t-\tau)|x|^{-b} |u(\tau)|^{\sigma}u(\tau)d\tau
\equiv u_{L} +u_{NL} ,
\end{equation}
where
\begin{equation} \nonumber
u_{L} =S(t)u_{0} ,~u_{NL} =-i\lambda \int _{0}^{t}S(t-\tau )|x|^{-b}|u(\tau)|^{\sigma}u(\tau)d\tau  .
\end{equation}
Lemma \ref{lem 4.1.} (Strichartz estimates) yields that
\begin{equation}\label{GrindEQ__4_22_}
\left\| u_{NL} \right\| _{L^{\gamma (r)} (I,\;H^{s}_{r,2})} \lesssim\left\| |x|^{-b}|u|^{\sigma}u\right\|
_{L^{\gamma (p)' } (I,\;H^{s}_{p',2})},
\end{equation}
\begin{equation}\label{GrindEQ__4_23_}
\left\| Gu-Gv \right\| _{L^{\gamma (r)} (I,\;L^{r,2})} \lesssim\left\| |x|^{-b} \left(|u|^{\sigma}u-|v|^{\sigma}v\right)\right\|_{L^{\gamma (p' } (I,\;L^{p',2})}.
\end{equation}
Using Lemma \ref{lem 4.1.}, \eqref{GrindEQ__4_22_}, \eqref{GrindEQ__4_23_} and the standard contraction argument (see e.g. \cite{AT21} or Section 4.9 in \cite{C03}), we can get the desired results and we omit the details. This concludes the proof.
\end{proof}

\subsection{Small data global well-posedness}
In this subsection, we prove Theorem \ref{thm 1.3.}.

\begin{lemma}\label{lem 4.4.}
Let $0\le s< \min\{2+\frac{d}{2},\;d\}$, $0<b<\min\{4,\;2+\frac{d}{2}-s,\;d-s\}$ and $\frac{8-2b}{d}\le \sigma\le \sigma_{c}(s)$ with $\sigma<\infty$. If $\sigma$ is not an even integer, assume that $\sigma>\left\lceil s\right\rceil -1$. Then for any interval $I(\subset \mathbb R)$, there exist $B$-admissible pairs $(\gamma(\bar{p}),\bar{p})$ and $(\gamma(\bar{r}),\bar{r})$ such that
\begin{equation} \label{GrindEQ__4_24_}
\left\||u|^{\sigma}u\right\|_{L^{\gamma(\bar{p})'}(I,\; \dot{H}_{\bar{p}',2}^{s})}
\lesssim \left\|u\right\|^{\sigma}_{L^{\gamma(\bar{r})}(I,\; \dot{H}_{\bar{r},2}^{s_{c}}\cap \dot{H}_{\bar{r},2}^{\tilde{s}_{c}})}\left\|u\right\|_{L^{\gamma(\bar{r})}(I,\; H_{\bar{r},2}^{s})},
\end{equation}
\begin{equation} \label{GrindEQ__4_25_}
\left\||u|^{\sigma}u\right\|_{L^{\gamma(\bar{p})'}(I,\; \dot{H}_{\bar{p}',2}^{s_{c}})\cap L^{\gamma(\bar{p})'}(I,\; \dot{H}_{\bar{p}',2}^{\tilde{s}_{c}})}
\lesssim \left\|u\right\|^{\sigma+1}_{L^{\gamma(\bar{r})}(I,\; \dot{H}_{\bar{r},2}^{s_{c}}\cap \dot{H}_{\bar{r},2}^{\tilde{s}_{c}})},
\end{equation}
\begin{equation} \label{GrindEQ__4_26_}
\left\||u|^{\sigma}v\right\|_{L^{\gamma(\bar{p})'}(I,\; L^{\bar{p}',2})}
\lesssim \left\|u\right\|^{\sigma}_{L^{\gamma(\bar{r})}(I,\; \dot{H}_{\bar{r},2}^{s_{c}})}\left\|u\right\|_{L^{\gamma(\bar{r})}(I, \;L^{\bar{r},2})},
\end{equation}
where $s_{c}$ and $\tilde{s}_{c}$ are given in \eqref{GrindEQ__1_11_}.
\end{lemma}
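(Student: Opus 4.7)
The strategy is to run the proof of Lemma~\ref{lem 4.3.} with admissible pairs chosen so that the time-scaling exponent $\theta$ vanishes, yielding estimates valid on arbitrary intervals $I$ (in particular on $\mathbb R$). The key arithmetic identity is $s_c=\frac{d}{2}-\frac{4-b}{\sigma}$ together with the rearrangement $\tilde s_c=\frac{\sigma s_c + s}{\sigma+1}$, which follows directly from the definitions in \eqref{GrindEQ__1_11_}. The first identity governs the scaling of the equation; the second encodes how to split one derivative of order $\tilde s_c$ among $\sigma+1$ factors of $u$ when some are at level $s_c$ and some at $\tilde s_c$.

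The first step is to pick $B$-admissible pairs $(\gamma(\bar p),\bar p)$ and $(\gamma(\bar r),\bar r)$ realizing
\[
\frac{1}{\bar p'} = \sigma\!\left(\frac{1}{\bar r}-\frac{s_c}{d}\right)+\frac{1}{\bar r}+\frac{b}{d},\qquad \frac{1}{\bar r}-\frac{s_c}{d}>0.
\]
Substituting $s_c=\frac{d}{2}-\frac{4-b}{\sigma}$ together with the $B$-admissibility identity $\frac{4}{\gamma(q)}=\frac{d}{2}-\frac{d}{q}$ automatically forces $\frac{1}{\gamma(\bar p)'}=\frac{\sigma+1}{\gamma(\bar r)}$, so no $|I|^{\theta}$ factor survives. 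Existence of such pairs is verified by the same case split $s<d/2$ vs.\ $s\ge d/2$ (and $d\le 4$ vs.\ $d\ge 5$) used in Lemma~\ref{lem 4.3.}; the hypothesis $\sigma\ge\frac{8-2b}{d}$ is equivalent to $\tilde s_c\ge 0$, which is precisely what makes the auxiliary Sobolev embedding $\dot H^{\tilde s_c}_{\bar r,2}\hookrightarrow L^{\beta,2}$ available.

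With the pairs fixed, each of the three bounds is produced by the same three-step routine used in Lemma~\ref{lem 4.3.}: (i) split $|x|^{-b}$ from $|u|^\sigma u$ via the fractional product rule (Lemma~\ref{lem 3.8.}), using Lemma~\ref{lem 2.1.} for $\||x|^{-b}\|_{L^{d/b,\infty}}$ and Remark~\ref{rem 4.2.} combined with Lemma~\ref{lem 2.1.} for $\|(-\Delta)^{\alpha/2}(|x|^{-b})\|_{L^{d/(b+\alpha),\infty}}$ at the relevant $\alpha\in\{s,s_c,\tilde s_c\}$; (ii) apply Corollary~\ref{cor 3.13.} to pull a derivative of the correct order onto one $u$-factor and leave the other $\sigma$ factors in $L^{*,2}$; (iii) convert those $L^{*,2}$ norms into $\dot H^{s_c}_{\bar r,2}$ or $\dot H^{\tilde s_c}_{\bar r,2}$ via Lemma~\ref{lem 3.5.}. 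For \eqref{GrindEQ__4_26_} steps (ii)--(iii) collapse to H\"older in Lorentz plus $\dot H^{s_c}_{\bar r,2}\hookrightarrow L^{\alpha,2}$. For \eqref{GrindEQ__4_25_} the $\dot H^{s_c}$ piece closes with every factor at $\dot H^{s_c}_{\bar r,2}$, while the $\dot H^{\tilde s_c}$ piece uses the identity $\tilde s_c=\frac{\sigma s_c+s}{\sigma+1}$ to justify placing one derivative at $\dot H^{\tilde s_c}_{\bar r,2}$ and the remaining $\sigma$ factors at $\dot H^{s_c}_{\bar r,2}$, both absorbed into the intersection norm. For \eqref{GrindEQ__4_24_} one factor is placed at $H^{s}_{\bar r,2}$ and the remaining $\sigma$ factors at $\dot H^{s_c}_{\bar r,2}\cap\dot H^{\tilde s_c}_{\bar r,2}$.

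The main obstacle is the parameter-matching bookkeeping: simultaneously enforcing the spatial scaling identity, the admissibility window $2\le q\le 2d/(d-4)$ (or $2\le q<\infty$ when $d\le 4$), and the strict Sobolev-embedding conditions $\frac{1}{\bar r}>\frac{s_c}{d}$ and $\frac{1}{\bar r}>\frac{\tilde s_c}{d}$, across the full parameter range for $d,s,b,\sigma$. This mirrors precisely Cases~1 and~2 of Lemma~\ref{lem 4.3.}, and the assumptions $b<\min\{4,\,d-s,\,2+d/2-s\}$ and $\frac{8-2b}{d}\le\sigma\le\sigma_c(s)$ leave enough slack to place $\bar r$ strictly in the required window in every regime.
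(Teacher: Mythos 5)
Your proposal is correct and follows essentially the same route as the paper: choose $B$-admissible pairs so that the nonlinearity scales critically at the $s_c$ level (forcing $\theta=0$), split $|x|^{-b}$ by the fractional product rule, apply the fractional chain rule, and close via homogeneous Sobolev--Lorentz embeddings. The paper writes the constraint as the temporal identity $\frac{1}{\gamma(\bar p)'}=\frac{\sigma+1}{\gamma(\bar r)}$ and derives the spatial one; you take the spatial identity as primary and observe it forces the temporal one — these are equivalent under $B$-admissibility, and your explicit observation $\tilde s_c=\frac{\sigma s_c+s}{\sigma+1}$ is exactly what makes the paper's (4.29) hold. One small correction: $\sigma\ge\frac{8-2b}{d}$ is equivalent to $s_c\ge 0$ (not $\tilde s_c\ge 0$); $\tilde s_c\ge 0$ then follows from $s_c\le\tilde s_c$, so the argument is unaffected.
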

\begin{proof}
Using the fact $\frac{8-2b}{d}\le \sigma\le \sigma_{c}(s)$, we can easily see that
\begin{equation} \label{GrindEQ__4_27_}
0\le s_{c}\le \tilde{s}_{c}\le s.
\end{equation}
Let $(\gamma(\bar{p}),\bar{p})$ and $(\gamma(\bar{r}),\bar{r})$ satisfy
\begin{equation} \label{GrindEQ__4_28_}
\frac{1}{\gamma(\bar{p})'}=\frac{\sigma+1}{\gamma(\bar{r})},~\frac{1}{\bar{r}}>\frac{\tilde{s}_{c}}{d}.
\end{equation}
Using \eqref{GrindEQ__4_28_}, we can see that
\begin{equation} \label{GrindEQ__4_29_}
\frac{1}{\bar{p}'}=\sigma\left(\frac{1}{\bar{r}} -\frac{s_{c}}{d}\right) +\frac{1}{\bar{r}}+\frac{b}{d}=(\sigma +1)\left(\frac{1}{\bar{r}} -\frac{\tilde{s}_{c}}{d}\right)+\frac{b+s}{d},
\end{equation}
Putting $\frac{1}{\alpha_{1}}:=\frac{1}{\bar{r}}-\frac{s_{c}}{d}$ and $\frac{1}{\alpha_{2}}:=\frac{1}{\bar{r}}-\frac{\tilde{s}_{c}}{d}$, it follows from \eqref{GrindEQ__4_27_}, \eqref{GrindEQ__4_28_} and Lemma \ref{lem 3.5.} that
\begin{equation} \label{GrindEQ__4_30_}
\dot{H}^{s_{c}}_{\bar{r},2}\hookrightarrow L^{\alpha_{1},2},~\dot{H}^{\tilde{s}_{c}}_{\bar{r},2}\hookrightarrow L^{\alpha_{2},2}.
\end{equation}
Using \eqref{GrindEQ__4_29_}, \eqref{GrindEQ__4_30_}, Lemmas \ref{lem 2.1.}--\ref{lem 2.4.}, \ref{lem 3.4.}, \ref{lem 3.6.}, \ref{lem 3.8.}, Corollary \ref{cor 3.13.} and Remark \ref{rem 4.2.}, we have
\begin{eqnarray}\begin{split} \label{GrindEQ__4_31_}
\left\| |x|^{-b}|u|^{\sigma}u\right\| _{\dot{H}_{\bar{p}',2}^{s} } &\lesssim \left\| |x|^{-b}\right\| _{L^{\bar{p}_{1},\infty}}\left\| |u|^{\sigma}u\right\| _{\dot{H}_{\bar{p}_{2},2}^{s}}
+\left\| |x|^{-b}\right\| _{\dot{H}_{\bar{p}_{3},\infty}^{s}}\left\| |u|^{\sigma}u\right\| _{L^{\bar{p}_{4},2}}\\
&\lesssim \left\| |u|^{\sigma}u\right\| _{\dot{H}_{\bar{p}_{2},2}^{s}}+\left\| |u|^{\sigma}u\right\| _{L^{\bar{p}_{4},2}}
\lesssim \left\| u\right\| _{L^{\alpha_{1},2}}^{\sigma}\left\| u\right\| _{\dot{H}_{\bar{r},2}^{s}}+\left\| u\right\| _{L^{\alpha_{2},2}}^{\sigma+1}\\
&\lesssim \left\| u\right\| _{\dot{H}_{\bar{r},2}^{s_{c}} }^{\sigma}\left\| u\right\| _{\dot{H}_{\bar{r},2}^{s}}+\left\| u\right\| _{\dot{H}_{\bar{r},2}^{\tilde{s}_{c}} }^{\sigma+1}
\lesssim \left\|u\right\|^{\sigma}_{\dot{H}_{\bar{r},2}^{s_{c}}\cap \dot{H}_{\bar{r},2}^{\tilde{s}_{c}}}\left\|u\right\|_{H_{\bar{r},2}^{s}}
\end{split}\end{eqnarray}
where
\begin{equation}\label{GrindEQ__4_32_}
\frac{1}{\bar{p}_{1}}:=\frac{b}{d},~\frac{1}{\bar{p}_{2}}:=\frac{1}{p'}-\frac{b}{d},~\frac{1}{\bar{p}_{3}}:=\frac{b+s}{d},~\frac{1}{\bar{p}_{4}}:=\frac{1}{\bar{p}'}-\frac{b+s}{d}.
\end{equation}
Putting
\begin{equation}\label{GrindEQ__4_33_}
\frac{1}{\bar{p}_{5}}:=\frac{b+s_{c}}{d},~\frac{1}{\bar{p}_{6}}:=(\sigma +1)\left(\frac{1}{\bar{r}} -\frac{s_{c}}{d}\right),~\frac{1}{\bar{p}_{7}}:=\frac{b+\tilde{s}_{c}}{d},~\frac{1}{\bar{p}_{8}}:=\sigma\left(\frac{1}{\bar{r}} -\frac{s_{c}}{d}\right)+\frac{1}{\bar{r}} -\frac{\tilde{s}_{c}}{d},
\end{equation}
it follow from \eqref{GrindEQ__4_29_} that $\frac{1}{\bar{p}'}=\frac{1}{\bar{p}_{5}}+\frac{1}{\bar{p}_{6}}=\frac{1}{\bar{p}_{7}}+\frac{1}{\bar{p}_{8}}$.
Hence, using the similar argument as in the proof of \eqref{GrindEQ__4_31_}, we also have
\begin{eqnarray}\begin{split} \label{GrindEQ__4_34_}
\left\| |x|^{-b}|u|^{\sigma}u\right\| _{\dot{H}_{\bar{p}',2}^{s_{c}} } &\lesssim \left\| |x|^{-b}\right\| _{L^{\bar{p}_{1},\infty}}\left\| |u|^{\sigma}u\right\| _{\dot{H}_{\bar{p}_{2},2}^{s_{c}}}
+\left\| |x|^{-b}\right\| _{\dot{H}_{\bar{p}_{5},\infty}^{s_{c}}}\left\| |u|^{\sigma}u\right\| _{L^{\bar{p}_{6},2}}\\
&\lesssim \left\| u\right\| _{L^{\alpha_{1},2}}^{\sigma}\left\| u\right\| _{\dot{H}_{\bar{r},2}^{s_{c}}}+\left\| u\right\| _{L^{\alpha_{1},2}}^{\sigma+1}
\lesssim \left\| u\right\| _{\dot{H}_{\bar{r},2}^{s_{c}} }^{\sigma +1},
\end{split}\end{eqnarray}
\begin{eqnarray}\begin{split} \label{GrindEQ__4_35_}
\left\| |x|^{-b}|u|^{\sigma}u\right\| _{\dot{H}_{\bar{p}',2}^{\tilde{s}_{c}} } &\lesssim \left\| |x|^{-b}\right\| _{L^{\bar{p}_{1},\infty}}\left\| |u|^{\sigma}u\right\| _{\dot{H}_{\bar{p}_{2},2}^{\tilde{s}_{c}}}
+\left\| |x|^{-b}\right\| _{\dot{H}_{\bar{p}_{7},\infty}^{\tilde{s}_{c}}}\left\| |u|^{\sigma}u\right\| _{L^{\bar{p}_{8},2}}\\
&\lesssim \left\| u\right\| _{L^{\alpha_{1},2}}^{\sigma}\left\| u\right\| _{\dot{H}_{\bar{r},2}^{\tilde{s}_{c}}}+\left\| u\right\| _{L^{\alpha_{1},2}}^{\sigma}\left\| u\right\| _{L^{\alpha_{2},2}}^{\sigma}\\
&\lesssim \left\| u\right\| _{\dot{H}_{\bar{r},2}^{s_{c}} }^{\sigma }\left\| u\right\| _{\dot{H}_{\bar{r},2}^{\tilde{s}_{c}}},
\end{split}\end{eqnarray}
and
\begin{eqnarray}\begin{split} \label{GrindEQ__4_36_}
\left\| |x|^{-b}|u|^{\sigma}v\right\| _{L^{\bar{p}',2}} &\lesssim  \left\| |x|^{-b}\right\| _{L^{\bar{p}_{1},\infty}}\left\| |u|^{\sigma}v\right\| _{L^{\bar{p}_{2},2}}
\lesssim \left\| u\right\| _{L^{\alpha_{1},2}}^{\sigma}\left\| v\right\| _{L^{\bar{r},2}}
\lesssim \left\| u\right\| _{\dot{H}_{\bar{r},2}^{s_{c}} }^{\sigma }\left\| v\right\| _{L^{\bar{r},2}}
\end{split}\end{eqnarray}
Using \eqref{GrindEQ__4_28_}, \eqref{GrindEQ__4_31_}, \eqref{GrindEQ__4_34_}--\eqref{GrindEQ__4_36_} and H\"{o}lder inequality, we immediately get \eqref{GrindEQ__4_24_}-- \eqref{GrindEQ__4_26_}. So it suffices to prove that there exist $B$-admissible pairs $(\gamma(\bar{p}),\bar{p})$ and $(\gamma(\bar{r}),\bar{r})$ satisfying \eqref{GrindEQ__4_28_}.

If $d\ge5$, then we put $\bar{p}:=\frac{2d}{d-4}$ and $\frac{1}{\bar{r}}:=\frac{1}{2}-\frac{2}{d(\sigma+1)}$. One can easily check that $(\gamma(\bar{p}),\bar{p})$ and $(\gamma(\bar{r}),\bar{r})$ are $B$-admissible and satisfy \eqref{GrindEQ__4_28_} by using the fact $b<2+\frac{d}{2}-s$.

It remains to consider the case $d\le 4$. Since $b<d-s$, there exists $\bar{p}$ large enough such that
$$
\frac{1}{\bar{p}}<1-\frac{b+s}{d},~2<\bar{p}<\infty.
$$
It is obvious that $(\gamma(\bar{p}),\bar{p})$ is $B$-admissible. \eqref{GrindEQ__4_28_} implies that
$$
\frac{1}{\bar{r}}=\frac{1}{2}-\frac{1}{\sigma+1}\left(\frac{4}{d}-\frac{1}{2}+\frac{1}{\bar{p}}\right).
$$
We can easily verify that $(\gamma(\bar{r}),\bar{r})$ is $B$-admissible by using the fact $\sigma\ge \frac{8-2b}{d}$ and $b<d-s$.
Furthermore, we can easily check that $\frac{1}{\bar{r}}>\frac{\tilde{s}_{c}}{d}$ is equivalent to $b<d-s$.
This complete the proof.
\end{proof}

We are ready to prove Theorem \ref{thm 1.3.}.
\begin{proof}[{\bf Proof of Theorem \ref{thm 1.3.}}]
Let $\bar{M}>0$ and $\bar{m}>0$, which will be chosen later. For $(\gamma(\bar{r}),\bar{r})$ given in Lemma \ref{lem 4.4.}, we define the following complete metric space
\begin{equation} \nonumber
\bar{D}=\left\{u\in L^{\gamma (p)} \left(\mathbb R,\;H_{r,2}^{s} \right):\;\left\| u\right\| _{L^{\gamma (\bar{r})} \left(\mathbb R,\;\dot{H}_{\bar{r},2}^{s_{c}}\cap \dot{H}_{\bar{r},2}^{\tilde{s}_{c}} \right)} \le \bar{m},\;\left\| u\right\| _{L^{\gamma (\bar{r})} \left(\mathbb R,\;H_{\bar{r},2}^{s} \right)} \le \bar{M}\right\},
\end{equation}
where
which is equipped with the metric
\begin{equation} \nonumber
d\left(u,\;v\right)=\left\| u-v\right\| _{L^{\gamma (\bar{r})} \left(\mathbb R,\;L^{\bar{r},2} \right)} .
\end{equation}
Using \eqref{GrindEQ__4_21_}, Lemma \ref{lem 4.1.} (Strichartz estimates), Lemma \ref{lem 4.4.} and the fact
\begin{equation} \nonumber
\left||x|^{-b} |u|^{\sigma}u-|x|^{-b} |v|^{\sigma}v\right|\lesssim|x|^{-b}(\left|u\right|^{\sigma } +\left|v\right|^{\sigma })|u-v|,
\end{equation}
we have
\begin{equation} \label{GrindEQ__4_37_}
\left\| Gu\right\| _{L^{\gamma (\bar{r})} (\mathbb R,\;H_{\bar{r},2}^{s} )} \le C(\left\| u_{0} \right\| _{H^{s} } +\left\|u\right\|^{\sigma}_{L^{\gamma(\bar{r})}(\mathbb R,\; \dot{H}_{\bar{r},2}^{s_{c}}\cap \dot{H}_{\bar{r},2}^{\tilde{s}_{c}})}\left\|u\right\|_{L^{\gamma(\bar{r})}(\mathbb R,\; H_{\bar{r},2}^{s})} ),
\end{equation}
\begin{equation} \label{GrindEQ__4_38_}
\left\| Gu\right\| _{L^{\gamma(\bar{r})}(\mathbb R,\; \dot{H}_{\bar{r},2}^{s_{c}}\cap \dot{H}_{\bar{r},2}^{\tilde{s}_{c}})}\le C(\left\| u_{0} \right\| _{\dot{H}^{s_{c}}\cap \dot{H}^{\tilde{s}_{c}}} +\left\|u\right\|^{\sigma+1}_{L^{\gamma(\bar{r})}(\mathbb R,\; \dot{H}_{\bar{r},2}^{s_{c}}\cap \dot{H}_{\bar{r},2}^{\tilde{s}_{c}})} ),
\end{equation}
\begin{equation} \label{GrindEQ__4_39_}
\left\| Gu-Gv\right\| _{L^{\gamma (\bar{r})} \left(\mathbb R,\;L^{\bar{r},2} \right)} \le C(\left\|u\right\|^{\sigma}_{L^{\gamma(\bar{r})}(\mathbb R,\; \dot{H}_{\bar{r},2}^{s_{c}})}+\left\|v\right\|^{\sigma}_{L^{\gamma(\bar{r})}(\mathbb R,\; \dot{H}_{\bar{r},2}^{s_{c}})})\left\|u-v\right\|_{L^{\gamma(\bar{r})}(\mathbb R, \;L^{\bar{r},2})}.
\end{equation}
Put $m=2C\left\| u_{0} \right\| _{\dot{H}^{s_{c}}\cap \dot{H}^{\tilde{s}_{c}}} $, $M=2C\left\| u_{0} \right\| _{H^{s} } $ and $\delta =2\left(4C\right)^{-\frac{\sigma +1}{\sigma } } $. If $\left\| u_{0} \right\| _{\dot{H}^{s_{c}}\cap \dot{H}^{\tilde{s}_{c}}} \le \delta $, i.e. $Cm^{\sigma } <\frac{1}{4} $, then it follows from \eqref{GrindEQ__4_37_}--\eqref{GrindEQ__4_39_} that
\begin{equation} \label{GrindEQ__4_40_}
\left\| Gu\right\| _{L^{\gamma (\bar{r})} (\mathbb R,\;H_{\bar{r},2}^{s} )} \le \frac{m}{2} +Cm^{\sigma } M\le M,
\end{equation}
\begin{equation} \label{GrindEQ__4_41_}
\left\| Gu\right\| _{L^{\gamma(\bar{r})}(\mathbb R,\; \dot{H}_{\bar{r},2}^{s_{c}}\cap \dot{H}_{\bar{r},2}^{\tilde{s}_{c}})} \le \frac{m}{2} +Cm^{\sigma +1} \le m,
\end{equation}
\begin{equation} \label{GrindEQ__4_42_}
\left\| Gu-Gv\right\| _{L^{\gamma (\bar{r})} \left(\mathbb R,\;L^{\bar{r},2} \right)} \le 2Cm^{\sigma } \left\|u-v\right\|_{L^{\gamma(\bar{r})}(\mathbb R, \;L^{\bar{r},2})}\le \frac{1}{2} \left\|u-v\right\|_{L^{\gamma(\bar{r})}(\mathbb R, \;L^{\bar{r},2})}.
\end{equation}
Hence, $G:(\bar{D},\;d)\to (\bar{D},\;d)$ is a contraction mapping and there exists a unique solution of \eqref{GrindEQ__1_1_} in $\bar{D}$.
It remains to prove the scattering result. We can see that \eqref{GrindEQ__1_13_} is equivalent to
\[{\mathop{\lim }\limits_{t\to \pm \infty }} \left\| e^{-it\Delta^{2} } u(t)-u_{0}^{\pm } \right\| _{H^{s}} =0.\]
In other words, it suffices to show that $e^{-it\Delta^{2} } u(t)$ converges in $H^{s} $ as $t_{1} ,\;t_{2} \to \pm \infty $.
Let $0<t_{1} <t_{2} <+\infty $. By using Lemma \ref{lem 4.1.} (Strichartz estimates) and Lemma \ref{lem 4.4.}, we have
\begin{eqnarray}\begin{split} \label{GrindEQ__4_43_}
&\left\| e^{-it_{2} \Delta^{2}} u(t_{2})-e^{-it_{1} \Delta^{2} } u(t_{1})\right\| _{H^{s} }=\left\| \int _{t_{1} }^{t_{2} }e^{-i\tau \Delta^{2} } |x|^{-b} |u(\tau)|^{\sigma}u(\tau) d\tau  \right\| _{H^{s} } \\
&~~~~~~~~~~~~~~~~~~~\lesssim\left\| |x|^{-b} |u|^{\sigma}u\right\| _{L^{\gamma (\bar{p})'} (\left(t_{1} ,\; t_{2} \right),\;H_{\bar{p}',2}^{s} )} \lesssim\left\| u\right\| _{L^{\gamma (\bar{r})} (\left(t_{1} ,\; t_{2} \right),\;H_{\bar{r},2}^{s})}^{\sigma +{\rm 1}}.
\end{split}\end{eqnarray}
Using \eqref{GrindEQ__4_43_} and the fact $\left\| u\right\| _{L^{\gamma (\bar{r})} \left(\mathbb R,\;H_{\bar{r},2}^{s} \right)} <\infty $, we have
\[\left\| e^{-it_{2} \Delta^{2} } u(t_{2})-e^{-it_{1} \Delta^{2} } u(t_{1})\right\| _{H^{s}} \to 0,\]
as $t_{1} ,\;t_{2} \to +\infty $. Thus, the limit $u_{0}^{+} :={\mathop{\lim }\limits_{t\to +\infty }} e^{-it\Delta^{2} } u\left(t\right)$
exits in $H^{s}$. This shows the small data scattering for positive time, the one for negative time is treated similarly. This completes the proof.
\end{proof}


\end{document}